\documentclass[a4]{amsart}

\input xypic
\usepackage{amssymb}

\oddsidemargin 0.200 true in
\evensidemargin 0.200 true in
\marginparwidth 1 true in
\topmargin -0.5 true in
\textheight 9 true in
\textwidth 6.0 true in


\usepackage{color}

\newtheorem{theorem}{Theorem}[section]
\newtheorem{proposition}[theorem]{Proposition}
\newtheorem{lemma}[theorem]{Lemma}

\theoremstyle{definition}

\newtheorem{remark}[theorem]{Remark}

\newcommand{\gk}{\ensuremath{\mathcal{G}_{k}}}
\newcommand{\gkprime}{\ensuremath{\mathcal{G}_{k^{\prime}}}}
\newcommand{\map}{\ensuremath{\mbox{Map}}}
\newcommand{\mapstar}{\ensuremath{\mbox{Map}^{\ast}}} 
 
\newcommand{\cpthree}{\ensuremath{\mathbb{C}P^{3}}}


\newcommand{\cohlgy}[1]{\ensuremath{H^{*}(#1)}} 
\newcommand{\rcohlgy}[1]{\ensuremath{\widetilde{H}^{*}(#1)}}

\newcounter{bean}
\newenvironment{letterlist}{\begin{list}{\rm ({\alph{bean}})}
      {\usecounter{bean}\setlength{\rightmargin}{\leftmargin}}}
      {\end{list}}

\newcommand{\namedright}[3]{\ensuremath{#1\stackrel{#2}
 {\longrightarrow}#3}}
\newcommand{\nameddright}[5]{\ensuremath{#1\stackrel{#2}
 {\longrightarrow}#3\stackrel{#4}{\longrightarrow}#5}}
\newcommand{\namedddright}[7]{\ensuremath{#1\stackrel{#2}
 {\longrightarrow}#3\stackrel{#4}{\longrightarrow}#5
  \stackrel{#6}{\longrightarrow}#7}}

\newcommand{\larrow}{\relbar\!\!\relbar\!\!\rightarrow}
\newcommand{\llarrow}{\relbar\!\!\relbar\!\!\larrow}
\newcommand{\lllarrow}{\relbar\!\!\relbar\!\!\llarrow} 
\newcommand{\llllarrow}{\relbar\!\!\relbar\!\!\lllarrow} 

\newcommand{\lnamedright}[3]{\ensuremath{#1\stackrel{#2}
 {\larrow}#3}}
\newcommand{\lnameddright}[5]{\ensuremath{#1\stackrel{#2}
 {\larrow}#3\stackrel{#4}{\larrow}#5}}

\newcommand{\llnamedright}[3]{\ensuremath{#1\stackrel{#2}
 {\llarrow}#3}}
\newcommand{\llnameddright}[5]{\ensuremath{#1\stackrel{#2}
 {\llarrow}#3\stackrel{#4}{\llarrow}#5}}

\newcommand{\lllnameddright}[5]{\ensuremath{#1\stackrel{#2}
 {\lllarrow}#3\stackrel{#4}{\lllarrow}#5}}

\newcommand{\llllnameddright}[5]{\ensuremath{#1\stackrel{#2}
 {\llllarrow}#3\stackrel{#4}{\llllarrow}#5}}

\newcommand{\qqed}{\hfill\Box}

\begin{document}


\title{The homotopy types of $SU(4)$-gauge groups} 
\author{Tyrone Cutler}
\address{Fakult{\"a}t f{\"u}r Mathematik, Universit{\"a}t Bielefeld, 33615 Bielefeld, Germany}
\email{tcutler@math.uni-bielefeld.de} 
\author{Stephen Theriault}
\address{Mathematical Sciences, University of Southampton, Southampton 
               SO17 1BJ, United Kingdom}
\email{S.D.Theriault@soton.ac.uk} 

\subjclass[2010]{Primary 55P15, Secondary 54C35.}
\keywords{gauge group, homotopy type}


\begin{abstract} 
Let \gk\ be the gauge group of the principal $SU(4)$-bundle over $S^{4}$ 
with second Chern class $k$ and let $p$ be a prime. We show that there 
is a rational or $p$-local homotopy equivalence $\Omega\gk\simeq\Omega\gkprime$ 
if and only if $(60,k)=(60,k^{\prime})$. 
\end{abstract}

\maketitle

\section{Introduction}
\label{sec:intro}

Let $G$ be a simply-connected, simple compact Lie group. 
Then principal $G$-bundles over $S^{4}$ are classified by the value 
of a degree $4$ characteristic class. For instance, if $G=SU(n)$, then this is the second Chern class. 
Fixing a generator we obtain an isomorphism $H^4(S^4)\cong\mathbb{Z}$,
and this class can take any integer value. Let 
\(\namedright{P_{k}}{}{S^{4}}\) 
represent the equivalence class of principal $G$-bundle
corresponding under the above scheme to the integer $k\in\mathbb{Z}$.
Let~\gk\ be the \emph{gauge group} 
of this principal $G$-bundle, which is the group of $G$-equivariant 
automorphisms of $P_{k}$ over $S^{4}$.
 
Crabb and Sutherland~\cite{CS} showed that, while there are countably 
many inequivalent principal $G$-bundles, the gauge groups 
$\{\gk\}_{k\in\mathbb{Z}}$ have only finitely many distinct homotopy 
types. There has been a great deal of interest recently in determining 
the precise number of possible homotopy types. The following 
classifications are known. For two integers $a,b$, let $(a,b)$ be 
their greatest common divisor. If $G=SU(2)$ then 
$\gk\simeq\gkprime$ if and only if $(12,k)=(12,k^{\prime})$~\cite{K}; 
if $G=SU(3)$ then $\gk\simeq\gkprime$ if and only if 
$(24,k)=(24,k^{\prime})$~\cite{HK}; if $G=SU(5)$ then 
$\gk\simeq\gkprime$ when localized at any prime $p$ or 
rationally if and only if $(120,k)=(120,k^{\prime})$~\cite{Th2}; and if 
$G=Sp(2)$ then $\gk\simeq\gkprime$ when localized at any prime $p$ 
or rationally if and only if $(40,k)=(40,k^{\prime})$~\cite{Th1}. Partial 
classifications that are potentially off by a factor of $2$ have been 
worked out for $G_{2}$~\cite{KTT} and $Sp(3)$~\cite{Cu}. 

The $SU(4)$ case is noticeably absent. The $SU(5)$ case was easier  
since elementary bounds on the number of homotopy types matched at 
the prime $2$ but did not at the prime $3$, and it is typically easier to 
work out $3$-primary problems in low dimension than $2$-primary 
problems. In the $SU(4)$ case the elementary bounds do not match at $2$, and 
the purpose of this paper is to resolve the difference, at least after looping.  

\begin{theorem} 
   \label{types} 
   For $G=SU(4)$, there is a homotopy equivalence 
   $\Omega\gk\simeq\Omega\gkprime$ when localized at any prime $p$ or 
   rationally if and only if $(60,k)=(60,k^{\prime})$.  
\end{theorem}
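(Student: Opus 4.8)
The plan is to use the standard approach to gauge group homotopy theory initiated by Kono and developed by others. For a principal $G$-bundle over $S^4$ classified by $k\in\mathbb{Z}\cong\pi_3(G)$, the gauge group $\mathcal{G}_k$ sits in a fibration sequence $\mathcal{G}_k\to G\to \mathrm{Map}_k(S^4,BG)$, and there is the key comparison due to the fact that $\mathrm{Map}_k(S^4,BG)$ has a universal bundle description: after evaluation, $B\mathcal{G}_k$ is the component of $\mathrm{Map}(S^4,BG)$ containing the classifying map, and $\mathcal{G}_k$ is homotopy equivalent to the fibre of the map $\mathrm{Map}^*_k(S^4,BG)\to G$... more precisely one uses that $\mathcal{G}_k\simeq \mathrm{Map}^*_k(S^4,BG)\times_{\text{fibrewise}}$... the clean statement I would invoke is: there is a fibration $G\xrightarrow{\partial_k} \mathrm{Map}^*_0(S^4,BG)\to B\mathcal{G}_k\to BG$ where the connecting map $\partial_k$ is (up to homotopy) the composite of $k$ times the Samelson product $\langle\iota,-\rangle$ associated to the generator $\iota\in\pi_3(G)$. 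Concretely, $\partial_k\simeq k\cdot\partial_1$, and $\partial_1$ is the Samelson product map $G\to\Omega^3_0 G$. So $\Omega\mathcal{G}_k$ is the fibre of $\Omega\partial_k\simeq k\cdot\Omega\partial_1: \Omega G\to \Omega^4_0 G$, and everything reduces to understanding the order and divisibility properties of $\partial_1$ in appropriate homotopy/homotopy-set contexts, localized at each prime.

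**Reduction to $p=2,3,5$ and the number 60.** Since $60=2^2\cdot3\cdot5$, I would first dispose of primes $p\geq 7$ and the rational case: there $SU(4)$ is, through the relevant range, a product of spheres (or has trivial enough Samelson products) so that $\partial_1$ is null after localization, making all $\Omega\mathcal{G}_k$ equivalent, consistent with $(60,k)$ being a $p$-local unit. For $p=5$: $SU(4)$ localized at $5$ behaves like $S^3\times S^5\times S^7$ through the low range, and the relevant Samelson product detecting $60$ comes from the order-$5$ part, giving $\Omega\mathcal{G}_k\simeq\Omega\mathcal{G}_{k'}$ iff $(5,k)=(5,k')$. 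For $p=3$, similarly one expects the decomposition of $SU(4)_{(3)}$ and computation of the triple Samelson products in $\pi_*(SU(4))$ (using known unstable homotopy of $SU(4)$, or of $S^3,S^5,S^7$ and the attaching maps) to yield the factor $3$. The genuinely hard prime is $p=2$, where the elementary bounds give a factor of $4$ versus $8$ ambiguity (cf. the remark in the introduction that the bounds "do not match at $2$"), and where $SU(4)_{(2)}$ does not split as a product.

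**The crux at $p=2$.** The main obstacle, as flagged, is the $2$-local computation: one must show the order of $\partial_1$, suitably interpreted after looping, is exactly $4$ (not $2$ or $8$) as an element controlling $[\Omega G, \Omega^4_0 G]$. I would attack this by choosing a convenient $2$-local CW filtration of $SU(4)$ — built from $S^3$, $S^5$, $S^7$ with known attaching maps involving $\eta$ and the relevant $\alpha$-family elements — and computing the Samelson product $\langle\iota_3,-\rangle$ cell by cell, tracking it in $\pi_*(\Omega^3 SU(4))\cong\pi_{*+3}(SU(4))$. The passage to $\Omega\mathcal{G}_k$ (rather than $\mathcal{G}_k$ itself) is what makes the factor-$4$ answer attainable: looping kills an unwanted $\mathbb{Z}/2$ extension/Toda-bracket indeterminacy that obstructs the unlooped classification, so I would look for an element of order $8$ in $[\mathcal{G}_k,?]$ whose image under $\Omega(-)$ has order dividing $4$, pin down that the looped map $k\cdot\Omega\partial_1$ has fibre depending only on $(4,k)$ at $p=2$, and verify the matching lower bound by detecting a nontrivial difference in (co)homology or in a Steenrod-module structure of $\Omega\mathcal{G}_k$ when $(4,k)\neq(4,k')$.

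**Assembling the if-and-only-if.** Finally I would combine the three primes: the "if" direction follows by showing $\Omega\partial_k$ and $\Omega\partial_{k'}$ have homotopy-equivalent fibres whenever $k\equiv k'$ (or $k\equiv -k'$, using the self-equivalence of $G$) modulo the appropriate prime power, at each $p\in\{2,3,5\}$ and trivially elsewhere, then reassembling via the local-to-global / arithmetic-square philosophy (here just: a $p$-local equivalence for every $p$ plus a rational one, which is exactly what the theorem asserts — no genuine gluing is needed since the statement is itself $p$-local/rational). The "only if" direction needs a homotopy invariant of $\Omega\mathcal{G}_k$ that distinguishes the residue $(60,k)$: I expect to read this off from the order of a distinguished element in a low-dimensional homotopy group $\pi_*(\Omega\mathcal{G}_k)=\pi_{*+1}(\mathcal{G}_k)$, computed from the long exact sequence of the fibration $\Omega G\to\Omega\mathcal{G}_k\to\Omega\,\mathrm{Map}^*_0$ together with the computed $\partial_1$, which is standard once the $p=2$ order computation is in hand.
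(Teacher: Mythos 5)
Your high-level reduction is the same as the paper's: factor everything through the order of the connecting map, note $\partial_k\simeq k\circ\partial_1$, handle $p=3,5$, $p>5$ and the rational case by known odd-primary results, invoke a fibre-comparison lemma for the ``if'' direction, and use a lower bound (in the paper, the Hamanaka--Kono unstable $K^1$ computation, transferred to the looped setting) for the ``only if'' direction. But the entire content of the paper is the $2$-primary step, and there your proposal has a genuine gap. A cell-by-cell Samelson product computation on a CW filtration of $SU(4)$ is exactly what does \emph{not} settle the question: the elementary bound comes from $\pi_{8}$, $\pi_{10}$, $\pi_{15}$ of $SU(4)$ all having $\mathbb{Z}/8\mathbb{Z}$ summands, and the restriction $f'_{3}$ of the adjoint to the top sphere $S^{15}$ may or may not have order $8$ --- this ambiguity is left unresolved in the paper too, and no amount of tracking $\langle\iota_3,-\rangle$ through the skeleta is known to remove it. Your phrase that ``looping kills an unwanted $\mathbb{Z}/2$ extension/Toda-bracket indeterminacy'' names the desired conclusion, not an argument; the actual mechanism is that one must show that in the bad case ($f'_{3}$ of order $8$) the map $4\circ\partial_{1}$ factors as $SU(4)\to S^{5}\times S^{7}\to S^{12}\to\Omega^{3}_{0}SU(4)$ through the pinch map to the top cell, and then use that the quotient $X\times Y\to X\wedge Y$ becomes null homotopic after looping (since it vanishes on the wedge and $\Omega(X\vee Y)\to\Omega(X\times Y)$ has a right inverse). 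Producing that factorization is where all the work lies: one needs the factorization of $\partial_{1}$ through $SU(4)/SU(2)\simeq S^{5}\times S^{7}$, Miller's stable splitting of $SU(4)$ together with its naturality (Crabb, Kitchloo) to pin down the unstable homotopy type of $\Sigma^{3}C$ (cofibre of $\pi$) and the class of $\Sigma^{3}j$, and several relations in $\pi_{15}$ of spheres and of $SU(4)$ (e.g.\ $d\circ\bar{\nu}_{7}\simeq\ast$, $d\circ\sigma'\eta_{14}\simeq\ast$) to build the extension over $E$. None of this is present or replaced by an alternative in your sketch, so the crucial claim that the looped $2$-primary order is $4$ rather than $8$ is unproved.

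A secondary, smaller issue: for the ``only if'' direction you propose to detect $(60,k)$ from homotopy groups or a Steenrod-module structure of $\Omega\gk$; this is speculative, whereas the standard route (used in the paper) is that the Hamanaka--Kono composite $\Sigma^{2n-5}\mathbb{C}P^{2}\to SU(n)\to\Omega^{3}_{0}SU(n)$ of order $n(n^{2}-1)$ has an adjoint of the same order, so the divisibility bound survives looping. You would need either to cite that result or supply an invariant calculation you have not sketched.
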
  

Two novel features arise in the methods used, as compared to the other 
known classifications. One is the use of Miller's stable splittings of Stiefel 
manifolds in order to gain some control over unstable splittings, and the 
other is showing that a certain ambiguity which prevents a clear 
classification statement for $\gk$ vanishes after looping. It would be 
interesting to know if these ideas give access to classifications for 
$SU(n)$-gauge groups for $n\geq 6$. 

One motivation for studying $SU(4)$-gauge groups is their connection 
to physics, in particular, to $SU(n)$-extensions of the standard model. 
For instance, the group $SU(4)$ is gauged in the Pati-Salam model~\cite{PS} 
and the flavour symmetry it represents there plays a role in several other 
grand unified theories~\cite{BH}. The progression of results from $SU(2)$ 
to $SU(5)$ and possibly beyond would be of interest to physicists studying 
the $SU(n)$-gauge groups in t'Hooft's large $n$ expansion~\cite{tH}. 

The authors would like to thank Michael Crabb for helpful discussions regarding 
stable splittings of Stiefel manifolds, and the referee for a very careful reading of the paper 
and the many suggestions for improvement.

\section{Determining homotopy types of gauge groups} 
\label{sec:prelim} 

We begin by describing a context in which homotopy theory can be applied 
to study gauge groups. This works for any simply-connected, simple 
compact Lie group $G$ and so is stated that way. Let $BG$ and $B\gk$ 
be the classifying spaces of $G$ and $\gk$ respectively. 
Let $\map(S^{4},BG)$ and $\mapstar(S^{4},BG)$ respectively 
be the spaces of freely continuous and pointed continuous maps 
between~$S^{4}$ and $BG$. The components of each space are 
in one-to-one correspondence with the integers, where the 
integer is determined by the degree of a map 
\(\namedright{S^{4}}{}{BG}\). 
By~\cite[Proposition~2.4]{AB} or~\cite[Theorem 5.2]{G}, there is a homotopy equivalence 
$B\gk\simeq\map_{k}(S^{4},BG)$ between $B\gk$ and the component 
of $\mbox{Map}(S^{4},BG)$ consisting of maps of degree~$k$. 
Evaluating a map at the basepoint of~$S^{4}$, we obtain a map 
\(ev\colon\namedright{B\gk}{}{BG}\) 
whose fibre is homotopy equivalent to $\mapstar_{k}(S^{4},BG)$. 
It is well known that each component of $\mapstar(S^{4},BG)$ is 
homotopy equivalent to $\Omega^{3}_{0} G$, the component of 
$\Omega^{3} G$ containing the basepoint. Putting all this together, 
for each $k\in\mathbb{Z}$, there is a homotopy fibration sequence 
\begin{equation} 
  \label{evfib} 
  \namedddright{G}{\partial_{k}}{\Omega^{3}_{0} G}{}{B\gk}{ev}{BG} 
\end{equation} 
where $\partial_{k}$ is the fibration connecting map. 

The order of $\partial_{k}$ plays a crucial role. By~\cite[Theorem 2.6]{L}, the triple adjoint 
\(\namedright{S^{3}\wedge G}{}{G}\) 
of $\partial_{k}$ is homotopic to the Samelson product 
$\langle k\cdot i,1\rangle$, where $i$ is the inclusion of 
$S^{3}$ into $G$ and $1$ is the identity map on~$G$. This implies 
two things. First, the order of $\partial_{k}$ is finite. For, rationally, 
$G$ is homotopy equivalent to a product of Eilenberg-MacLane spaces,
and moreover this equivalence can be induced by an $H$-map. 
Indeed, according to the Hopf-Borel Theorem~\cite[p.16]{Ka}, $H^*(G;\mathbb{Q})$ is a primitively generated Hopf algebra. 
Any choice of primitive generators will yield a map with the required properties.
Since Eilenberg-MacLane spaces are homotopy commutative, 
any Samelson product into such a space is null homotopic. Thus, rationally, the 
adjoint of $\partial_{k}$ is null homotopic, implying that 
the same is true for $\partial_{k}$ and therefore that the order of 
$\partial_{k}$ is finite. Second, the linearity of the Samelson product  
implies that $\langle k\cdot i,1\rangle\simeq k\circ\langle i,1\rangle$, 
so taking adjoints we obtain $\partial_{k}\simeq k\circ\partial_{1}$. 
Thus the order of~$\partial_{k}$ is determined by the order of $\partial_{1}$. 
When $G=SU(n)$, lower bounds exist on the order of $\partial_{1}$ 
and on the number of homotopy types of \gk.

\begin{lemma} 
   \label{HKlemma} 
   Let $G=SU(n)$. If $n>2$ then the following hold: 
   \begin{letterlist} 
      \item the order of $\partial_{1}$ is a multiple of $n(n^{2}-1)$; 
      \item if $\gk\simeq\gkprime$ then 
                    $(n(n^{2}-1),k)=(n(n^{2}-1),k^{\prime})$. 
   \end{letterlist} 
\end{lemma} 

\begin{proof} 
Consider the Samelson product 
\(\namedright{S^{3}\wedge SU(n)}{\langle i,1\rangle}{SU(n)}\) 
where $i$ is the inclusion of the bottom cell and $1$ is the identity map. 
Bott~\cite[Theorem 1]{B} showed that if 
\(c'\colon\namedright{S^{2n-3}}{}{SU(n)}\) 
and 
\(c\colon\namedright{S^{2n-1}}{}{SU(n)}\) 
represent generators of $\pi_{2n-3}(SU(n))\cong\mathbb{Z}$ and 
$\pi_{2n-1}(SU(n))\cong\mathbb{Z}$, respectively, then 
the Samelson product $\langle i,c'\rangle=\langle i,1\rangle\circ (1\wedge c')$ 
has order $n(n-1)$ while the Samelson product $\langle i,c\rangle=\langle i,1\rangle\circ(1\wedge c)$ 
has order $n(n+1)/2$ if $n$ is odd and $n(n+1)$ if $n$ is even. Thus if $n$ is even the order 
of $\langle i,1\rangle$ is at least $n(n^{2}-1)$ (that is, the order of $\langle i,1\rangle$ 
is a multiple of $n(n^{2}-1)$). As $\partial_{1}$ is the adjoint of $\langle i,1\rangle$, it has the same 
order, and hence the order of $\partial_{1}$ is a multiple of $n(n^{2}-1)$ if~$n$ is even. 

If $n$ is odd then this homotopy group calculation differs from the statement of part~(a) 
by a factor of~$\frac{1}{2}$. On the other hand, Hamanaka and Kono~\cite[calculation 
preceding Lemma 2.5]{HK} showed that there is a map 
\(d\colon\namedright{\Sigma^{2n-5}\mathbb{C}P^{2}}{}{SU(n)}\) 
with the property that the Samelson product $\langle i,d\rangle=\langle i,1\rangle\circ (1\wedge d)$ 
has order $n(n^{2}-1)$. Thus, as before, $\partial_{1}$ has order a multiple of $n(n^{2}-1)$. 
This proves part~(a). 

Part~(b) for $n$ even case is sketched by Sutherland~\cite[Proposition 4.2]{S} while the $n$ odd 
case was proved by Hamanaka and Kono~\cite[Theorem 1.2]{HK}. 
\end{proof} 

As we aim for statements about looped gauge groups, we need a looped version 
of Lemma~\ref{HKlemma}. 

\begin{lemma} 
   \label{loopHKlemma} 
   Let $G=SU(n)$. If $n>2$ then the following hold: 
   \begin{letterlist} 
      \item the order of $\Omega\partial_{1}$ is divisible by $n(n^{2}-1)$; 
      \item if $\Omega\gk\simeq\Omega\gkprime$ then 
                    $(n(n^{2}-1),k)=(n(n^{2}-1),k^{\prime})$. 
   \end{letterlist} 
\end{lemma} 

\begin{proof} 
The calculations described in the proof of Lemma~\ref{HKlemma}~(a) involved 
maps $c'$, $c$ and $d$, all of which were suspensions. Their adjoints therefore 
have the same order, so part~(a) follows. 

Suppose that $\Omega\gk\simeq\Omega\gkprime$. Then $[X,\Omega\gk]\cong [X,\Omega\gkprime]$ 
for any $CW$-complex $X$, implying that $[\Sigma^{2} X,B\gk]\cong [\Sigma^{2}X, B\gkprime]$.  
Therefore Sutherland's homotopy group calculations in~\cite[Example 4.1]{S} for $B\gk$ 
that led to the $n$ even case of Lemma~\ref{HKlemma}~(b) equally imply in our case that 
$(n(n^{2}-1),k)=(n(n^{2}-1),k^{\prime})$. Also, Hamanaka and Kono's calculation 
of $[\Sigma^{2n-5}\mathbb{C}P^{2}, B\gk]$ in~\cite[Lemma 2.5; see also page 150]{HK}, which 
they used to prove the $n$ odd case of Lemma~\ref{HKlemma}~(b), equally implies in 
our case that $(n(n^{2}-1),k)=(n(n^{2}-1),k^{\prime})$. 
\end{proof} 

In particular, if $G=SU(4)$ then $60$ divides the order of $\Omega\partial_{1}$ 
and a homotopy equivalence $\Omega\gk\simeq\Omega\gkprime$ implies that 
$(60,k)=(60,k^{\prime})$. In Section~\ref{sec:proof} we will find an upper bound 
on the order of $\Omega\partial_{1}$ that matches the lower bound. 

\begin{theorem} 
   \label{looppartialorder} 
   The map 
   \(\namedright{\Omega SU(4)}{\Omega\partial_{1}}{\Omega^{4}_{0} SU(4)}\) 
   has order~$60$. 
\end{theorem}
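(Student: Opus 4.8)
The plan is to establish the upper bound: $60 \cdot \Omega\partial_1 \simeq \ast$, which combined with Lemma~\ref{loopHKlemma}(a) gives the exact order. Since $\Omega\partial_1 \simeq \Omega(k \circ \partial_1)$ reduces everything to $\partial_1$, and since $SU(4)$ is rationally a product of odd spheres with $\partial_1$ rationally trivial, the task splits into proving that $\Omega\partial_1$ has order dividing the $p$-primary part of $60$ after localizing at each prime $p \in \{2,3,5\}$, and is trivial at all other primes. At primes $p \geq 7$ the relevant torsion vanishes for dimension reasons (the first nontrivial component of $\Omega\partial_1$ would have to be detected by a Samelson product landing in a sufficiently connected range), so the work concentrates at $p = 2, 3, 5$.

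First I would set up the standard filtration of $SU(4)$ by the quasifibrations $SU(3) \to SU(4) \to S^7$ and $SU(2) \to SU(3) \to S^5$, and analyze $\Omega\partial_1$ through the induced decomposition of $\Omega SU(4)$. The key structural input, flagged in the introduction, is Miller's stable splitting of the Stiefel manifold $SU(4)$, whose stable summands are suspensions of $\mathbb{C}P^2$ and $\mathbb{C}P^3$ (and spheres); looping once should give enough unstable control to split the problem into pieces indexed by these summands. For each piece I would identify the restriction of $\Omega\partial_1$ with (the loop of) a Samelson product $\langle i, 1\rangle$ restricted to a suspension $\Sigma^{2j-1}\mathbb{C}P^2$ or $\Sigma^{2j-1}\mathbb{C}P^3$ inside $SU(4)$, and bound its order using known computations of unstable homotopy groups of $SU(4)$ together with the homotopy of $\Omega^4_0 SU(4)$ in the relevant low-dimensional range. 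The Hamanaka–Kono computation already supplies the order-$60$ element coming from $\Sigma^{3}\mathbb{C}P^2$; what remains is to show no other summand contributes anything of larger order.

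The main obstacle I expect is the $2$-primary analysis: the introduction explicitly says the elementary bounds fail to match at $p = 2$, so a naive counting argument based on the order of $\partial_1$ cannot close the gap. The resolution will be to show that the extra $2$-primary ambiguity — some potential order-$2$ or order-$4$ discrepancy that obstructs a clean statement for $\gk$ itself — becomes trivial once we loop, i.e.\ the offending element is in the kernel of the relevant looping/suspension map or is killed by a Samelson-product relation that only becomes available after applying $\Omega$. Concretely I would pin down the $2$-local homotopy class of $\Omega\partial_1$ restricted to the $\Sigma^{2j-1}\mathbb{C}P^2$ and $\Sigma^{2j-1}\mathbb{C}P^3$ summands, using the $H$-space structure of $\Omega SU(4)$ and the behaviour of Samelson products under the James–Hopf and Whitehead product operations, and verify that each such restriction has order dividing $4$. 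At the primes $3$ and $5$ the argument should be more routine: there the metastable range is large enough that $\Omega\partial_1$ is essentially detected on a single stable summand, and the order divides $3$ and $5$ respectively by a direct comparison with the Hamanaka–Kono element. Assembling the three local bounds and the rational triviality yields order dividing $60$, and with the lower bound from Lemma~\ref{loopHKlemma} the order is exactly $60$. $\qqed$
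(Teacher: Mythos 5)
Your outer structure matches the paper's: rational triviality plus the lower bound from Lemma~\ref{loopHKlemma}, the odd-primary orders $3$ and $5$ (which the paper simply quotes from~\cite{Th3} as Lemma~\ref{oddbounds}), and triviality at $p>5$, so that everything reduces to showing the $2$-local order of $\Omega\partial_{1}$ is $4$. But that $2$-primary step is the entire content of the paper (Sections 3--6), and your proposal does not actually contain an argument for it. The known homotopy groups only give the bound $8$: the $2$-components of $\pi_{8}(SU(4))$, $\pi_{10}(SU(4))$ and $\pi_{15}(SU(4))$ all contain $\mathbb{Z}/8\mathbb{Z}$, so ``bound its order using known computations of unstable homotopy groups'' reproduces exactly the initial bound of Lemma~\ref{initialbound} and cannot close the gap from $8$ to $4$. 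Saying you would ``verify that each such restriction has order dividing $4$'' is asserting the conclusion, not proving it; no mechanism is identified for why looping kills the potential order-$8$ discrepancy.

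The paper's actual mechanism is quite specific and is missing from your plan. It does not decompose $\Omega SU(4)$ into unstable pieces indexed by Miller's summands (no such unstable splitting is established, and Miller's theorem is only stable); instead it uses the factorization $\partial_{1}\simeq f\circ\pi$ through $SU(4)/SU(2)\simeq S^{5}\times S^{7}$ from~(\ref{su4su2}), forms the cofibre $C$ of $\pi$, and uses the Crabb--Kitchloo naturality of Miller's splitting only to pin down the stable type of $C$ and of $j$, from which the unstable type of $\Sigma^{3}C$ and the class of $\Sigma^{3}j$ are deduced. The delicate input is then a case analysis on the order of $f'_{3}\in\pi_{15}(SU(4))$ (Lemma~\ref{alternative} and Proposition~\ref{order4options}): either $4\circ\partial_{1}$ is null, or $4\circ\partial_{1}$ factors through the pinch map \(\namedright{S^{5}\times S^{7}}{}{S^{12}}\) to the top cell, and the latter dies after looping precisely because the loop of the quotient map \(\namedright{X\times Y}{}{X\wedge Y}\) is null homotopic (the wedge inclusion has a right homotopy inverse after looping). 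This also requires nontrivial unstable computations such as $d\circ\bar{\nu}_{7}\simeq\ast$ and $d\circ\sigma'\eta_{14}\simeq\ast$ (Lemma~\ref{SU4relns}) to build the extension $e$ off $E$. Your appeal to ``the kernel of the relevant looping/suspension map or a Samelson-product relation available after applying $\Omega$'' gestures in this direction but names neither the factorization through the top cell nor the $\Omega(X\times Y\to X\wedge Y)\simeq\ast$ fact, which are the decisive points; as written, the proposal would stall at the order-$8$ bound.
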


Granting Theorem~\ref{looppartialorder} for now, we can prove 
Theorem~\ref{types} by using the following general result from~\cite[Lemma 3.1]{Th1}. 
If $Y$ is an $H$-group (a homotopy associative $H$-space with a homotopy inverse), let 
\(k\colon\namedright{Y}{}{Y}\)
be the $k^{th}$-power map.  

\begin{lemma}
   \label{ptypecount}
   Let $X$ be a space and $Y$ be an $H$-group. Suppose there is a map
   \(\namedright{X}{f}{Y}\)
   of order~$m$, where $m$ is finite. Let $F_{k}$ be the homotopy fibre 
   of $k\circ f$. If $(m,k)=(m,k^{\prime})$ then $F_{k}$ and $F_{k^{\prime}}$ 
   are homotopy equivalent when localized rationally or at any prime.~$\qqed$ 
\end{lemma}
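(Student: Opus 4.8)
The plan is to prove this localization by localization --- at each prime $p$ and rationally --- using that $F_{k}$ is by definition the homotopy fibre of $k\circ f$ and that $f$ has finite order $m$. Fix one of these localizations; write $R$ for $\mathbb{Z}_{(p)}$ (or $\mathbb{Q}$) and regard all spaces and maps as localized accordingly. The key point is that it suffices to produce an integer $\ell$ which is a unit in $R$ and satisfies $\ell k\equiv k'\pmod{m}$. Indeed, the $\ell$-th power map $\ell\colon Y\to Y$ is then a homotopy equivalence: $Y$, being a grouplike $H$-space, is nilpotent, and $\ell$ induces multiplication by the unit $\ell$ on the homotopy groups of $Y$, so the claim follows from the localized Whitehead theorem. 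And, as explained below, $\ell\circ(k\circ f)\simeq k'\circ f$. Granting this, the square with horizontal maps $k\circ f$ and $k'\circ f$, left vertical map the identity on $X$, and right vertical map $\ell$ commutes up to homotopy; since both vertical maps are homotopy equivalences, the induced map of homotopy fibres $F_{k}\to F_{k'}$ is a homotopy equivalence by the long exact homotopy sequence. So the proof splits into producing $\ell$ and verifying $\ell\circ(k\circ f)\simeq k'\circ f$.

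To produce $\ell$ I would argue purely number-theoretically. Put $d=(m,k)=(m,k')$ and write $k=du$, $k'=du'$, $m=dm'$, so that $(u,m')=(u',m')=1$. Since $u$ is invertible modulo $m'$, choose $\ell_{0}$ with $\ell_{0}u\equiv u'\pmod{m'}$; then $\ell_{0}k\equiv k'\pmod m$, and $(\ell_{0},m')=1$, since a common prime factor of $\ell_{0}$ and $m'$ would divide $\ell_{0}u\equiv u'$, contradicting $(u',m')=1$. Now replace $\ell_{0}$ by $\ell=\ell_{0}+tm'$ for a suitable $t$: this leaves $\ell k\equiv k'\pmod m$ intact and makes $\ell$ a unit in $R$. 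When $R=\mathbb{Z}_{(p)}$ this means $p\nmid\ell$, which is automatic if $p\mid m'$ (as then $p\nmid\ell_{0}$) and otherwise arranged by choosing $t$ with $\ell\not\equiv 0\pmod p$, possible since $m'$ is a unit mod $p$; when $R=\mathbb{Q}$ one only needs $\ell\neq 0$. Thus $\ell$ exists, and the argument is uniform over all primes and the rationals.

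Finally I would verify $\ell\circ(k\circ f)\simeq k'\circ f$ by unwinding the $H$-space structure. Writing $\ell k=k'+sm$ with $s\in\mathbb{Z}$, one has $\ell\circ(k\circ f)\simeq(\ell k)\circ f\simeq(k'+sm)\circ f\simeq(k'\circ f)\cdot\bigl((sm)\circ f\bigr)$, using that the composite of the $\ell$-th and $k$-th power maps on $Y$ is the $(\ell k)$-th power map and that a sum of two maps into $Y$ is their product under the $H$-multiplication; since $f$ has order $m$ the factor $(sm)\circ f\simeq s\circ(m\circ f)$ is null, so the product collapses to $k'\circ f$. I expect this last step to be the only delicate part of the whole argument: one must make sure the power maps are well defined and that identities such as $(\ell k)\circ f\simeq\ell\circ(k\circ f)$ and $(k'+sm)\circ f\simeq(k'\circ f)\cdot\bigl((sm)\circ f\bigr)$ genuinely hold in the set of homotopy classes $[X,Y]$. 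This is exactly where the hypothesis that $Y$ is an $H$-space with a homotopy inverse is used --- it makes $[X,Y]$ a group, in which any two powers of the single element $f$ commute --- and it is what lets the power-map bookkeeping go through. Everything else (the number theory above, the localized Whitehead argument showing the $\ell$-th power map is an equivalence, and the comparison of homotopy fibres) is routine.
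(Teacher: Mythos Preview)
The paper does not prove this lemma: it is quoted from \cite{Th1} and marked with a $\qed$ immediately after the statement. Your argument is correct and is exactly the standard proof one finds in that reference --- reduce to a single localization, manufacture an integer $\ell$ that is a local unit with $\ell k\equiv k'\pmod m$, observe that the $\ell$-th power map on $Y$ is then a self-equivalence, and compare fibres via the five lemma. The number-theoretic step and the bookkeeping with power maps in the group $[X,Y]$ are handled correctly; the only implicit assumption you are using (and which the paper is also tacitly using) is enough associativity on $Y$ for the power maps and the identity $\ell\circ(k\circ f)\simeq(\ell k)\circ f$ to make sense, which is harmless here since in the intended application $Y$ is a loop space.
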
 

\noindent 
\begin{proof}[Proof of Theorem~\ref{types}] 
By Theorem~\ref{looppartialorder}, the map 
\(\namedright{\Omega SU(4)}{\Omega\partial_{1}}{\Omega^{4}_{0} SU(4)}\) 
has order~$60$. It follows from Lemma~\ref{ptypecount}, therefore, that if 
$(60,k)=(60,k^{\prime})$, then $\Omega\gk\simeq\Omega\gkprime$ when 
localized at any prime $p$ or rationally. On the other hand, 
by Lemma~\ref{loopHKlemma}, if $\Omega\gk\simeq\Omega\gkprime$ then 
$(60,k)=(60,k^{\prime})$. Thus there is a homotopy equivalence 
$\Omega\gk\simeq\Omega\gkprime$ at each prime $p$ and rationally if and 
only if $(60,k)=(60,k^{\prime})$. 
\end{proof} 

It remains to prove Theorem~\ref{looppartialorder}. In fact, the odd primary 
components of the order of $\partial_{1}$ (and hence of $\Omega\partial_{1}$ 
by Lemma~\ref{loopHKlemma}) are obtained  as special cases of a more general 
result in~\cite[Theorem~1.1(c)]{Th3}. 

\begin{lemma} 
   \label{oddbounds} 
   Localized at $p=3$, $\partial_{1}$ has order $3$; localized at $p=5$, 
   $\partial_{1}$ has order $5$; and localized at~$p>5$, $\partial_{1}$ has 
   order~$1$.~$\qqed$ 
\end{lemma}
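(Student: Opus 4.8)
The plan is to deduce this from the odd-primary analysis of $SU(n)$-gauge groups in \cite{Th3}, using Lemma~\ref{HKlemma} for the matching lower bounds. Since $60=2^{2}\cdot 3\cdot 5$, Lemma~\ref{HKlemma}(a) says the order of $\partial_{1}$ is divisible by $3$ when localized at $p=3$ and by $5$ when localized at $p=5$, while it imposes no constraint for $p>5$. So the entire content of the lemma is the reverse divisibility: $\partial_{1}$ should have order dividing $3$ at $p=3$, dividing $5$ at $p=5$, and be null homotopic for $p>5$. The general theorem of \cite{Th3} computes exactly this $p$-component for all $SU(n)$, and specializing $n=4$ yields the stated orders; what follows is a sketch of why those values come out as claimed, which is how I would argue if \cite{Th3} were not available.

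Recall from Section~\ref{sec:prelim} that the order of $\partial_{1}$ equals the order of the Samelson product $\langle i,1\rangle\colon S^{3}\wedge SU(4)\to SU(4)$, equivalently of $c\circ(i\wedge 1)$, where $c\colon SU(4)\wedge SU(4)\to SU(4)$ is the commutator map. Localized at an odd prime, the only cohomology operation that can link the generators $x_{3},x_{5},x_{7}$ of $H^{\ast}(SU(4))$ is $\mathcal{P}^{1}$, which raises degree by $2(p-1)$, and this connects two of the $x_{i}$ only when $p=3$ (where it links $x_{3}$ and $x_{7}$). Hence for $p\geq 5$ there is a $p$-local homotopy equivalence $SU(4)\simeq S^{3}\times S^{5}\times S^{7}$, and for $p=3$ there is a $3$-local homotopy equivalence $SU(4)\simeq X\times S^{5}$, where $X$ is the indecomposable complex with $H^{\ast}(X;\mathbb{Z}/3)\cong\Lambda(x_{3},x_{7})$, so $X$ has cells in dimensions $3$, $7$ and $10$ with $\mathcal{P}^{1}x_{3}=x_{7}$. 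Smashing such a product decomposition with $S^{3}$ splits $S^{3}\wedge SU(4)$ into a wedge of spheres (for $p\geq 5$), or of spheres and suspended copies of $X$ (for $p=3$), all of dimension at most $18$, and $\langle i,1\rangle$ is detected on these wedge summands by a finite list of iterated Samelson products lying in groups $\pi_{m}(SU(4))_{(p)}$ with $m\leq 18$. These groups are known from Bott periodicity together with the low-dimensional homotopy of $SU(4)$, and since $SU(4)$ is an $H$-space the Whitehead products among the cell inclusions vanish, which controls the higher iterated contributions. One then reads off the orders: at $p>5$ every relevant group is trivial, so $\partial_{1}$ is null; at $p=5$ the only surviving contribution is a single summand $\mathbb{Z}/5$ in $\pi_{10}(SU(4))$ — where, consistently, the Hamanaka--Kono composite of Lemma~\ref{HKlemma} detects the factor $5$ — forcing the order to divide $5$; and at $p=3$ each contribution has order dividing $3$.

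The main obstacle is the prime $p=3$, since there $SU(4)$ is not a product of spheres and one must handle the indecomposable factor $X$, that is, compute the Samelson product of $i$ with the inclusion $X\hookrightarrow SU(4)$. This requires understanding $[\Sigma^{3}X,SU(4)]_{(3)}$, and in particular how the attaching data of $X$ interacts with the commutator map $c$; this is precisely the step where the argument of \cite{Th3} does its real work. Absent that reference, I would carry out the computation directly in the present low-dimensional situation: use the skeletal filtration of $\Sigma^{3}X$ (whose bottom cell is $S^{6}$, with $\pi_{6}(SU(4))=0$, through to dimension $13$) to reduce the order of the restricted Samelson product to the order of a class in $\pi_{10}(SU(4))_{(3)}$, and then invoke the known $3$-primary homotopy of $SU(4)$ through dimension~$18$ — or the relevant portion of the mod~$3$ unstable Adams spectral sequence — to pin the total $3$-primary order at exactly~$3$.
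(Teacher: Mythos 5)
Your proposal is essentially the paper's own argument: the paper disposes of this lemma purely by quoting it as a special case of the general odd-primary result of \cite{Th3}, which is exactly the operative step in your write-up (with the matching lower bounds at $p=3,5$ coming from Lemma~\ref{HKlemma}, as you say). Your supplementary sketch of how one might argue without \cite{Th3} (odd-primary product decompositions of $SU(4)$ and iterated Samelson products) is extra and, as you yourself note, left incomplete at $p=3$ where the factor $X$ requires the real computation, but since the proof actually rests on specializing \cite{Th3} to $n=4$, it coincides with the paper's approach.
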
  

Thus to prove Theorem~\ref{looppartialorder} we are reduced to proving 
the following. 

\begin{theorem} 
   \label{looppartialorder2} 
    Localized at $2$ the map 
   \(\namedright{\Omega SU(4)}{\Omega\partial_{1}}{\Omega^{4}_{0} SU(4)}\) 
   has order~$4$. 
\end{theorem}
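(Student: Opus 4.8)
The plan is to localize everything at $2$ and bound the order of $\Omega\partial_1$ both below and above by $4$. The lower bound is already in hand: Lemma~\ref{loopHKlemma}(a) tells us $60$ divides the order of $\Omega\partial_1$, so $4$ divides it $2$-locally; equivalently, one knows from~\cite{HK} that the composite $\Sigma^{3}\cptwo\to SU(4)\xrightarrow{\partial_1}\Omega^{3}_{0}SU(4)$ has order exactly $60$, hence order $4$ at the prime $2$, and passing to adjoints the map $\Sigma^{2}\cptwo\to\Omega SU(4)\xrightarrow{\Omega\partial_1}\Omega^{4}_{0}SU(4)$ has order $4$. So the real content is the upper bound: $4\cdot\Omega\partial_1\simeq 0$ after $2$-localization.

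First I would set up the skeletal filtration of $\Omega SU(4)$ and reduce to understanding $\partial_1$ (equivalently $\Omega\partial_1$) on low-dimensional pieces. Recall that $SU(4)$ has cells in dimensions $3,5,7$ and products thereof, so $\Omega SU(4)$ is built, through a range, from $S^{2}$, $S^{4}$, $S^{6}$ and attaching maps; the relevant subquotients are suspensions of stunted projective spaces and Stiefel-manifold pieces. Here is where Miller's stable splitting of $SU(4)=W_{4}$ enters: stably $SU(4)\simeq S^{3}\vee \Sigma^{2}\cptwo\vee \Sigma^{4}\cptwo\vee S^{7}$ (the exterior-algebra pattern), which gives a handle on the stable — and, through a metastable range, the unstable — decomposition of the source. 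The strategy is to compose $\partial_1$ with the inclusions of these wedge summands (or their unstable analogues) one at a time and show each composite has order dividing $4$ at $p=2$. The summand $S^{3}\hookrightarrow SU(4)$ contributes the generator detected by Lemma~\ref{HKlemma}; the $\Sigma^{2}\cptwo$ summand is exactly the Hamanaka--Kono class of order $60$, giving the sharp $4$ at $p=2$; and the point is to show the top summands $\Sigma^{4}\cptwo$ and $S^{7}$ (and the interaction terms the unstable splitting introduces, which is the "ambiguity" the introduction flags) contribute nothing new $2$-locally, i.e.\ their composites with $\partial_1$ are killed by $4$.

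The key computational input is the structure of $\partial_1$ via the Samelson product $\langle i,1\rangle\colon S^{3}\wedge SU(4)\to SU(4)$, restricted to these summands: $\langle i, -\rangle$ restricted to $S^{3}\wedge S^{3}$, to $S^{3}\wedge\Sigma^{2}\cptwo$, etc. Using that $\langle i,1\rangle$ is (a desuspension of) the attaching-map data of $BSU(4)$ and that the relevant homotopy groups of $SU(4)$ in this range — $\pi_{*}(SU(4))$ for $*$ up to roughly $13$ or so, $2$-locally — are small and known (via the fibrations $SU(3)\to SU(4)\to S^{7}$ and $S^{3}\to SU(3)\to S^{5}$, together with EHP/Toda calculations), one computes the order of each Samelson product and hence of each piece of $\partial_1$. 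Adjointing up to $\Omega\partial_1$ commutes with all of this. The main obstacle I expect is precisely the interaction/ambiguity term: the unstable decomposition of $\Omega SU(4)$ does not split off $\Sigma^{4}\cptwo$ cleanly, so on the level of $\partial_1$ itself there is a genuine extension problem that can only be pinned down up to an element whose order might be $8$ rather than $4$. The resolution — and this is the novelty advertised in the introduction — is to show that this ambiguous element, while possibly of order $8$ in $[\,SU(4),\Omega^{3}_{0}SU(4)\,]$, becomes divisible-by-$4$-torsion after looping once, i.e.\ it dies in $[\,\Omega SU(4),\Omega^{4}_{0}SU(4)\,]$ because the looped structure maps (the Whitehead/James filtration of $\Omega SU(4)$) annihilate the offending Hopf-type contribution. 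Concretely I would identify the ambiguity with an element built from $\eta$ or $\nu$ composed into a stunted projective space, check its image under $\Omega$ against the James splitting of $\Omega\Sigma(-)$, and verify the order drops to $4$; combined with the lower bound this gives exactly $4$, proving Theorem~\ref{looppartialorder2} and hence Theorem~\ref{looppartialorder}.
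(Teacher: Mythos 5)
Your outline has the right overall shape (lower bound $4$ from Hamanaka--Kono, an order-$8$ ambiguity in the top dimension, and the insight that looping must kill it), but the concrete steps that would make this a proof are missing or wrong. First, you never use the factorization of $\partial_{1}$ through the quotient $\pi\colon SU(4)\to SU(4)/SU(2)\simeq S^{5}\times S^{7}$ (due to Bott, refined in~\cite{KKT}), and this is the engine of the whole argument: it is what reduces everything to the three spheres $S^{8}$, $S^{10}$, $S^{15}$ mapping into $SU(4)$, hence to $\pi_{8}$, $\pi_{10}$, $\pi_{15}$ of $SU(4)$, which Mimura--Toda computed. Your plan of restricting $\partial_{1}$ to stable summands of $SU(4)$ itself would instead require control of $[S^{15},\Omega^{3}_{0}SU(4)]\cong\pi_{18}(SU(4))$ and similar groups beyond the computed range. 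Relatedly, the Miller splitting you quote is not correct: stably $SU(4)\simeq \Sigma\cpthree\vee M\vee S^{15}$ with $M$ a three-cell complex with cells in dimensions $8,10,12$; there is no $\Sigma^{2}\cptwo\vee\Sigma^{4}\cptwo$ pattern, and the splitting is used in the paper not on $SU(4)$ but (via its naturality for $\pi$) to identify the cofibre $C$ of $\pi$ and the class of $\Sigma^{3}j$.

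Second, the step where you "show the top summands contribute nothing new $2$-locally, i.e.\ their composites with $\partial_{1}$ are killed by $4$" cannot be carried out as stated: the possible order-$8$ generator $[\nu_{5}\oplus\eta_{7}]\circ\sigma_{8}\in\pi_{15}(SU(4))$ may genuinely occur, and in that case $4\circ\partial_{1}$ is \emph{not} null homotopic (the paper remarks on exactly this). What actually saves the day is a precise identification, obtained from the cofibration $SU(4)\xrightarrow{\pi}S^{5}\times S^{7}\to C$, the splitting of $\Sigma^{3}C$, and Toda-relation computations: in the bad case $4\circ\partial_{1}$ factors as $SU(4)\xrightarrow{\pi}S^{5}\times S^{7}\xrightarrow{Q}S^{12}\to\Omega^{3}_{0}SU(4)$, where $Q$ is the pinch map to the smash, and $\Omega Q$ is null homotopic because $Q$ kills the wedge while the looped inclusion of the wedge into the product admits a right homotopy inverse. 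Your "James splitting / Hopf-type contribution annihilated by the looped structure maps" gesture points in this general direction, but without locating the ambiguity as a map factoring through the product-to-smash quotient (rather than "an element built from $\eta$ or $\nu$ composed into a stunted projective space"), there is no argument that looping reduces its order; that localization of the problem is precisely the content of the paper's Lemmas~\ref{jclass}--\ref{Eext}, \ref{alternative} and Proposition~\ref{order4options}, none of which your proposal supplies.
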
 

\section{An initial upper bound on the $2$-primary order of $\partial_{1}$} 
\label{sec:initialbound} 

Throughout this section all spaces and maps are localized at $2$. 
As mentioned in Section~\ref{sec:prelim}, the adjoint of $\partial_{1}$ is the
Samelson product \(\namedright{S^{3}\wedge SU(n)}{\langle i,1\rangle}{SU(n)}\), which is determined by the commutator in $SU(n)$. 
Since $SU(n)$ contains a 
copy of $SU(n-2)$ which commutes with the image of $i$, the Samelson product factors through
$S^{3}\wedge(SU(n)/SU(n-2))$ as in~\cite[page 251]{B}. Thus taking adjoints yields a homotopy commutative square
\[\diagram 
       SU(n)\rto^-{\partial_{1}}\dto^{q} & \Omega^{3}_{0} SU(n)\ddouble \\ 
       SU(n)/SU(n-2)\rto^-{f} & \Omega^{3}_{0} SU(n) 
  \enddiagram\] 
for some map $f$, where $q$ is the standard quotient map. 
In our case, by~\cite[Theorem 1.18]{JW} there is a homotopy equivalence 
$SU(4)/SU(2)\simeq S^{5}\times S^{7}$. Thus there is a homotopy 
commutative square 
\begin{equation} 
  \label{su4su2} 
  \diagram 
     SU(4)\rto^-{\partial_{1}}\dto^{q} & \Omega^{3}_{0} SU(4)\ddouble \\ 
     S^{5}\times S^{7}\rto^-{f} & \Omega^{3}_{0} SU(4). 
  \enddiagram 
\end{equation} 
Taking the triple adjoint of $f$, we obtain a map 
\[f'\colon\nameddright{S^{8}\vee S^{10}\vee S^{15}}{\simeq} 
       {\Sigma^{3}(S^{5}\times S^{7})}{}{SU(4)}.\] 
Mimura and Toda~\cite[Theorem 6.1]{MT} calculated the homotopy groups of $SU(4)$ 
through a range. The $2$-primary components of $\pi_{8}(SU(4))$, 
$\pi_{10}(SU(4))$ and $\pi_{15}(SU(4))$ are 
$\mathbb{Z}/8\mathbb{Z}$, $\mathbb{Z}/8\mathbb{Z}\oplus\mathbb{Z}/2\mathbb{Z}$ 
and $\mathbb{Z}/8\mathbb{Z}\oplus\mathbb{Z}/2\mathbb{Z}$, respectively. 
Consequently, the order of $f'$ is bounded above by $8$. The order of~$f$ 
is therefore also bounded above by $8$. The homotopy commutativity 
of~(\ref{su4su2}) then implies the following. 

\begin{lemma} 
   \label{initialbound} 
   Localized at $2$, the order of the map 
   \(\namedright{SU(4)}{\partial_{1}}{\Omega^{3}_{0} SU(4)}\) 
   is bounded above by $8$.~$\qqed$ 
\end{lemma}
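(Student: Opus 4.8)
The plan is to use the homotopy commutative square~(\ref{su4su2}) to replace $\partial_{1}$ by the map $f$, and then to bound the order of $f$ using the low-dimensional homotopy groups of $SU(4)$ computed by Mimura and Toda~\cite{MT}.

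First I would observe that the homotopy commutativity of~(\ref{su4su2}) gives $\partial_{1}\simeq f\circ\pi$. Since $\Omega^{3}_{0} SU(4)$ is an $H$-space, writing $k$ for its $k^{th}$-power map we have $k\circ(f\circ\pi)=(k\circ f)\circ\pi$ by associativity of composition, so the order of $\partial_{1}$ divides the order of $f$. It therefore suffices to show that $f$ has order at most $8$.

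Next I would pass to the triple adjoint $f'\colon\Sigma^{3}(S^{5}\times S^{7})\to SU(4)$, which has the same order as $f$ because the suspension--loop adjunction $[\Sigma^{3}(S^{5}\times S^{7}),SU(4)]\cong[S^{5}\times S^{7},\Omega^{3} SU(4)]$ is a group isomorphism compatible with power maps. Applying the stable splitting $\Sigma(X\times Y)\simeq\Sigma X\vee\Sigma Y\vee\Sigma(X\wedge Y)$ yields a homotopy equivalence $\Sigma^{3}(S^{5}\times S^{7})\simeq S^{8}\vee S^{10}\vee S^{15}$, the three wedge summands being suspensions in a way that makes the inclusions into $\Sigma^{3}(S^{5}\times S^{7})$ co-$H$ maps. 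Hence restriction identifies $f'$ with a triple of elements of $\pi_{8}(SU(4))$, $\pi_{10}(SU(4))$ and $\pi_{15}(SU(4))$, and the order of $f'$ is the least common multiple of the orders of these three elements.

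Finally I would invoke the Mimura--Toda computation: localized at $2$ these groups are $\mathbb{Z}/8\mathbb{Z}$, $\mathbb{Z}/8\mathbb{Z}\oplus\mathbb{Z}/2\mathbb{Z}$ and $\mathbb{Z}/8\mathbb{Z}\oplus\mathbb{Z}/2\mathbb{Z}$, each of exponent $8$, so the order of $f'$, hence of $f$, hence of $\partial_{1}$, divides $8$. I do not expect any real obstacle here: once the square~(\ref{su4su2}) and the relevant homotopy groups are in hand the argument is routine. The one point worth flagging is that this method supplies only an upper bound and gives no control over which element of $\pi_{8}(SU(4))\oplus\pi_{10}(SU(4))\oplus\pi_{15}(SU(4))$ the map $f'$ actually represents, so sharpening $8$ to the true $2$-primary order is left to the subsequent sections.
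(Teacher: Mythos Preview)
Your proposal is correct and follows essentially the same argument as the paper: factor $\partial_{1}$ through $f$ via~(\ref{su4su2}), pass to the triple adjoint $f'$ on $\Sigma^{3}(S^{5}\times S^{7})\simeq S^{8}\vee S^{10}\vee S^{15}$, and bound each restriction using the Mimura--Toda computations of $\pi_{8}$, $\pi_{10}$, $\pi_{15}$ of $SU(4)$. The paper's version is terser but the logic is identical, and your closing remark about this giving only an upper bound with no control over the specific class is exactly the point taken up in the subsequent sections.
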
 

Ideally it should be possible to reduce this upper bound by a factor of two. 
The remainder of the paper aims to show that this can be done after looping.

\section{Stable splittings of Stiefel manifolds} 
\label{sec:stableStiefel} 

In obtaining a $2$-primary bound on the order of $\Omega\partial_1$ we will make 
use of the quotient map 
\(\namedright{SU(4)}{q}{SU(4)/SU(2)}\). 
This will be examined in detail after three suspensions, corresponding to the three loops 
on $\Omega^{3}_{0} SU(4)$. It helps to first have information about its stable behaviour. 
Miller~\cite[Theorem~C]{M} gave stable splittings of Stiefel manifolds, which would apply 
to both $SU(4)=U(4)/U(1)$ and $SU(4)/SU(2)\cong U(4)/U(2)$. These splittings  
satisfy certain naturality properties but are unfortunately not natural with respect to $q$. In 
Section~\ref{sec:stable} we will prove an \emph{ad hoc} stable splitting of $SU(4)$ 
that is compatible with the quotient map $q$ to $SU(4)/SU(2)\simeq S^{5}\times S^{7}$. 
This section is devoted to reviewing information about the stable splittings of 
Stiefel manifolds that will be used in the next section. 

We follow the exposition by Crabb~\cite{Cr}. To set notation, let $U$ and $V$ be non-zero 
finite-dimensional complex inner product space and let $n$ be the dimension of $V$ 
over $\mathbb{C}$. Let 
\begin{itemize} 
   \item $U(V)$ be the unitary group of $V$; 
   \item $\mathfrak{u}(V)$ be the Lie algebra of skew-Hermitian endomorphisms of $V$; 
   \item $G_{k}(V)$ be the Grassmann manifold of $k$-dimensional subspaces of $V$; 
   \item $\zeta_{k}$ be the canonical $k$-dimensional sub-bundle of the trivial bundle 
            over $G_{k}(V)$;  
   \item $\zeta_{k}^{\ast}\otimes W=\mbox{Hom}(\zeta_{k},W)$; 
   \item $G_{k}(V)^{\mathfrak{u}(\zeta_{k})\oplus(\zeta_{k}^{\ast}\otimes W)}$ be the Thom space of 
            $\mathfrak{u}(\zeta_{k})\oplus(\zeta_{k}^{\ast}\otimes W)$; 
   \item $U(V;W)$ be the space of isometric linear embeddings 
            \(\namedright{V}{}{V\oplus W}\), 
            which naturally identifies with $U(V\oplus W)/U(W)$. 
\end{itemize} 
A subscript $+$ indicates a disjoint basepoint. 

For $0\leq k\leq n$, Crabb~\cite[Section 1]{Cr} constructs $U(V)\times U(W)$-equivariant maps 
\[\sigma_{k}(V,W)\colon\namedright{G_{k}(V)^{\mathfrak{u}(\zeta_{k})\oplus(\zeta_{k}^{\ast}\otimes W)}} 
       {}{U(V;W)_{+}}\] 
that have left homotopy inverses stably. Collectively, they give a stable splitting 
\[\sigma(V,W)\colon\namedright 
     {\bigvee_{k=0}^{n} G_{k}(V)^{\mathfrak{u}(\zeta_{k})\oplus(\zeta_{k}^{\ast}\otimes W)}} 
     {}{U(V;W)\cong U(V\oplus W)/U(W)_{+}}.\]  
In particular, if $V=\mathbb{C}^{n}$ and $W=\mathbb{C}^{t}$ then there is a stable splitting 
\begin{equation} 
   \label{Crabbsplitting} 
   \bigvee_{k=0}^{n} G_{k}(\mathbb{C}^{n})^{\mathfrak{u}(\zeta_{k})\oplus(\zeta_{k}^{\ast}\otimes\mathbb{C}^{t})} 
        \simeq U(n+t)/U(t)_{+}. 
\end{equation} 

The authors are indebted to Michael Crabb for pointing out the following two naturality 
properties of the splitting maps $\sigma_{k}(V,W)$. 

\begin{lemma} 
   \label{Crabb1} 
   There is a commutative diagram of stable maps 
   \[\diagram 
         G_{k}(V)^{\mathfrak{u}(\zeta_{k})\oplus(\zeta_{k}^{\ast}\otimes W)}\rrto^-{\sigma_{k}(V,W)}\dto 
             &  & U(V;W)_{+}\dto \\ 
         G_{k}(V\oplus F)^{\mathfrak{u}(\zeta_{k})\oplus(\zeta_{k}^{\ast}\otimes W)}\rrto^-{\sigma_{k}(V\oplus F,W)}  
             &  & U(V\oplus F;W)_{+} 
      \enddiagram\] 
    in which the vertical maps are inclusions induced, on the left, by the inclusion 
    \(\namedright{V}{}{V\oplus W}\) 
    and, on the right, by taking the direct sum with the identity map 
    \(\namedright{F}{}{F}\).~$\qqed$  
\end{lemma} 

\begin{lemma} 
   \label{Crabb2} 
   There is a commutative diagram of $U(V)\times U(W)$-equivariant stable maps 
   \[\diagram 
         G_{k}(V)^{\mathfrak{u}(\zeta_{k})\oplus(\zeta_{k}^{\ast}\otimes W)}\rrto^-{\sigma_{k}(V,W)}\dto 
             & &  U(V;W)_{+}\dto \\ 
         G_{k}(V)^{\mathfrak{u}(\zeta_{k})\oplus(\zeta_{k}^{\ast}\otimes (E\oplus W))}\rrto^-{\sigma_{k}(V,E\oplus W)}  
             &  & U(V;E\oplus W)_{+} 
      \enddiagram\] 
    in which the vertical maps are induced by the inclusion 
    \(\namedright{W}{}{E\oplus W}\).~$\qqed$  
\end{lemma} 

Taking $V=\mathbb{C}^{n}$, $W=\mathbb{C}^{t}$ and $F=\mathbb{C}^{m}$, Lemma~\ref{Crabb1} 
implies that the stable splitting in~(\ref{Crabbsplitting}) is  compatible with the inclusion 
\[\namedright{U(n+t)/U(t)}{}{U(m+n+t)/U(t)}.\] 
Thus as $m\rightarrow\infty$ the stable splitting of $U(n+t)/U(t)$ is compatible with a stable 
splitting of $U(\infty)/U(t)$. Taking $E=\mathbb{C}^{r}$ as well, Lemma~\ref{Crabb2} 
implies that the stable splitting in~(\ref{Crabbsplitting}) is compatible with the inclusion 
\[\namedright{U(n+t)/U(t)}{}{U(r+n+t)/U(r+t)}.\] 
Thus as $n\rightarrow\infty$ the stable splitting of $U(\infty)/U(t)$ is compatible 
with that of $U(\infty)/U(r+t)$. Note that this includes the case for 
\(\namedright{U(\infty)}{}{U(\infty)/U(r)}\).  
Therefore, there is a stably homotopy commutative diagram 
\begin{equation} 
  \label{Utrstable} 
  \diagram 
      U(\infty)_{+}\rto\dto^{\simeq S} & U(\infty)/U(t)_{+}\rto\dto^{\simeq S} & U(\infty)/U(r+t)_{+}\dto^{\simeq S} \\ 
      \bigvee_{k=0}^{\infty} A_{k}\rto^-{\bigvee_{k=0}^{\infty} a_{k}} 
          & \bigvee_{k=0}^{\infty} A'_{k}\rto^-{\bigvee_{k=0}^{\infty} a'_{k}} & \bigvee_{k=0}^{\infty} A''_{k} 
  \enddiagram 
\end{equation} 
where $\simeq_{S}$ denotes a stable homotopy equivalence, 
\[\begin{split} 
    A_{k} & =\varinjlim_{m} 
           G_{k}(\mathbb{C}^{n}\oplus\mathbb{C}^{m})^{\mathfrak{u}(\zeta_{k})} \\  
     A'_{k} & =\varinjlim_{m} 
  G_{k}(\mathbb{C}^{n}\oplus\mathbb{C}^{m})^{\mathfrak{u}(\zeta_{k})\oplus(\zeta_{k}^{\ast}\otimes\mathbb{C}^{t})} \\  
     A''_{k} & =\varinjlim_{m} 
  G_{k}((\mathbb{C}^{r}\oplus\mathbb{C}^{n})\oplus\mathbb{C}^{m})^{\mathfrak{u}(\zeta_{k})\oplus(\zeta_{k}^{\ast}\otimes(\mathbb{C}^{r}\oplus\mathbb{C}^{t}))}, 
\end{split}\]  
the map $a_{k}$ is induced by the inclusion 
\(\namedright{\mathbb{C}^{0}}{}{\mathbb{C}^{0}\oplus\mathbb{C}^{t}}\), 
and the map $a'_{k}$ is induced by the inclusions 
\(\namedright{\mathbb{C}^{n}}{}{\mathbb{C}^{r}\oplus\mathbb{C}^{n}}\) 
and  
\(\namedright{\mathbb{C}^{t}}{}{\mathbb{C}^{r}\oplus\mathbb{C}^{t}}\). 

As noted by Miller~\cite[Theorem C]{M}, when $k=1$ the Thom space 
$G_{1}(\mathbb{C}^{n})^{\mathfrak{u}(\zeta_{1})\oplus(\zeta_{1}^{\ast}\otimes\mathbb{C}^{t})}$ 
in~(\ref{Crabbsplitting}) is identifiable. If $t=0$ it is homeomorphic to $S^{1}\vee\Sigma\mathbb{C}P^{n-1}$,
while if $t>1$ it is homeomorphic to the stunted projective space 
$\Sigma\mathbb{C}P^{n+t-1}/\Sigma\mathbb{C}P^{t}$, and the maps $a_{1}$ and $a'_{1}$ in~(\ref{Utrstable}) 
are the standard quotient maps.

\section{Stable splittings of $SU(4)$ and $SU(4)/SU(2)$} 
\label{sec:stable} 

The homotopy groups of spheres will play an important role in the next few sections. We follow Toda's notation~\cite{To} in all 
cases except one. Specifically, 
(i) for $n\geq 3$, $\eta_{n}=\Sigma^{n-3}\eta_{3}$ represents the 
generator of $\pi_{n+1}(S^{n})\cong\mathbb{Z}/2\mathbb{Z}$; 
(ii) for $n\geq 5$, $\nu_{n}=\Sigma^{n-3}\nu_{5}$ represents the 
generator of $\pi_{n+3}(S^{n})\cong\mathbb{Z}/24\mathbb{Z}$; 
and (iii) differing from Toda's notation, for $n\geq 3$, $\nu'_{n}=\Sigma^{n-3}\nu'_{3}$ 
represents the $n-3$ fold suspension of the generator~$\nu'_{3}$ of 
$\pi_{6}(S^{3})\cong\mathbb{Z}/12\mathbb{Z}$. Note that for $n\geq 5$, 
$\nu'_{n}=2\nu_{n}$. Compositions of these elements are denoted by juxtaposition, and we write $\eta_n^2=\eta_n\eta_{n+1}$, $\eta_n^3=\eta_n^2\eta_{n+2}$, and $\nu_n^2=\nu_n\nu_{n+3}$ in these cases. 

 
We will need some properties of $SU(4)$. There is an 
algebra isomorphism $\cohlgy{SU(4);\mathbb{Z}}\cong\Lambda(x,y,z)$, 
where the degrees of $x,y,z$ are $3,5,7$, respectively. This gives $\rcohlgy{SU(4);\mathbb{Z}}$ 
the module basis $\{x,y,z,xy,xz,yz,xyz\}$ in degrees $\{3,5,7,8,10,12,15\}$, and it follows that 
$SU(4)$ may be given a $CW$-structure with one cell in each of those dimensions. There is a canonical map 
\(\namedright{\Sigma\cpthree}{}{SU(4)}\) 
which induces a projection onto the generating set in cohomology. Notice 
that $\Sigma\cpthree$ is homotopy equivalent to the $7$-skeleton of $SU(4)$, 
and there is a homotopy cofibration 
\begin{equation} 
  \label{CP3cofib} 
  \llnameddright{S^{4}\vee S^{6}}{\eta_{3}\vee\nu'_{3}}{S^{3}}{}{\Sigma\cpthree}.  
\end{equation} 
Write $\cohlgy{SU(4);\mathbb{Z}/2\mathbb{Z}}\cong\Lambda(\bar{x},\bar{y},\bar{z})$ 
where $\bar{x}$, $\bar{y}$ and $\bar{z}$ are the respective mod-$2$ reductions of $x$, $y$ and $z$.
The action of the Steenrod algebra is determined on the generating set 
by $Sq^{2}(\bar{x})=\bar{y}$, $Sq^{2}(\bar{y})=Sq^{2}(\bar{z})=0$ and $Sq^{i}=0$ for 
all $i\neq 2$. In particular, we will later use the following facts derived from the 
Cartan formula: $Sq^{2}(\bar{x}\bar{y})=\bar{y}^{2}=0$, $Sq^{2}(\bar{x}\bar{z})=\bar{y}\bar{z}$ 
and $Sq^{4}(\bar{x}\bar{y})=0$. Thus $Sq^{2}$ and $Sq^{4}$ act trivially on 
$H^{9}(\Sigma SU(4);\mathbb{Z}/2\mathbb{Z})$ while $Sq^{2}$ acts nontrivially on 
$H^{11}(\Sigma SU(4);\mathbb{Z}/2\mathbb{Z})$.

The stable decomposition of $SU(4)$ has the following form. Regard $SU(4)$ as $U(4)/U(1)$, 
so~(\ref{Crabbsplitting}) applies. Kitchloo~\cite[Theorem C]{K} calculated the 
$\widetilde{E}^{\ast}$ cohomology of the stable summands for any complex-oriented 
cohomology theory $E$. In the case of ordinary cohomology with integer coefficients, 
with $\cohlgy{SU(4)}\cong\Lambda(x,y,z)$ 
for $\vert x\vert=3$, $\vert y\vert=5$ and $\vert z\vert=7$, he obtains 
\[\begin{split} 
    \rcohlgy{G_{1}(\mathbb{C}^{4})^{\mathfrak{u}(\zeta_{1})\oplus(\zeta_{1}^{\ast}\otimes\mathbb{C})}} 
       &  \cong\mathbb{Z}\{x,y,z\} \\  
   \rcohlgy{G_{2}(\mathbb{C}^{4})^{\mathfrak{u}(\zeta_{2})\oplus(\zeta_{2}^{\ast}\otimes\mathbb{C})}} 
       & \cong\mathbb{Z}\{xy,xz,yz\} \\   
   \rcohlgy{G_{3}(\mathbb{C}^{4})^{\mathfrak{u}(\zeta_{3})\oplus(\zeta_{3}^{\ast}\otimes\mathbb{C})}} 
        & \cong\mathbb{Z}\{xyz\}. 
\end{split}\] 
In terms of $CW$-complexes, it was already mentioned that 
$G_{1}(\mathbb{C}^{4})^{\mathfrak{u}(\zeta_{1})\oplus(\zeta_{1}^{\ast}\otimes\mathbb{C})} 
     \cong\Sigma\mathbb{C}P^{3}$. 
The second summand $N=G_{2}(\mathbb{C}^{4})^{\mathfrak{u}(\zeta_{2})\oplus(\zeta_{2}^{\ast}\otimes\mathbb{C})}$ is a $3$-cell complex with cells in dimensions $8$, $10$ and $12$. 
Finally, 
 the inclusion 
\(\namedright{S^{15}}{}{G_{3}(\mathbb{C}^{4})^{\mathfrak{u}(\zeta_{3})\oplus(\zeta_{3}^{\ast}\otimes\mathbb{C})}}\) 
of the bottom cell induces an isomorphism in cohomology and so is a homotopy 
equivalence by Whitehead's Theorem. Collecting this together gives the following. 

\begin{theorem} 
   \label{Miller} 
   There is a stable homotopy equivalence 
   \[SU(4)\simeq_{S}\Sigma\cpthree\vee N\vee S^{15}\] 
   where $N$ is a $3$-cell complex with cells in dimensions $8$, $10$ and $12$.   
\end{theorem} 
\vspace{-1cm}~$\qqed$\medskip 

In what follows, we will use Theorem~\ref{Miller} to produce a potentially different 
stable homotopy equivalence for $SU(4)$ that is compatible with the quotient map 
\(\namedright{SU(4)}{q}{S^{5}\times S^{7}}\), 
and better identify the space $N$. 

Define the space $C$ and maps $j$ and $\delta$ by the homotopy cofibration 
\begin{equation} 
  \label{Ccofib} 
  \namedddright{SU(4)}{q}{S^{5}\times S^{7}}{j}{C}{\delta}{\Sigma SU(4)}. 
\end{equation}  
Since $q^{\ast}$ is an inclusion onto the subalgebra $\Lambda(y,z)$ 
of $\Lambda(x,y,z)\cong\cohlgy{SU(4);\mathbb{Z}}$, the long exact sequence 
in cohomology induced by the cofibration sequence~(\ref{Ccofib}) implies that 
a module basis for $\rcohlgy{C;\mathbb{Z}}$ 
is given by $\{\sigma x,\sigma xy, \sigma xz, \sigma xyz\}$ 
in degrees $\{4,9,11,16\}$ respectively, where the elements of 
$\rcohlgy{C;\mathbb{Z}}$ have been identified with the image of $\delta^{\ast}$. 
So as a $CW$-complex, $C$ has one cell in each of the dimensions 
$\{4,9,11,16\}$. We will determine the stable homotopy type of $C$, the 
stable class of the map $j$, and a stable decomposition for the spaces and map 
\(\namedright{SU(4)}{q}{S^{5}\times S^{7}}\). 

From the left square in~(\ref{Utrstable}) we obtain a homotopy cofibration diagram 
\begin{equation} 
  \label{Aidgrm} 
  \diagram 
      U(\infty)_{+}\rto\dto^{\simeq_{S}} & U(\infty)/U(2)_{+}\rto\dto^{\simeq_{S}} & D\dto^{\simeq_{S}} \\ 
      \bigvee_{k=0}^{\infty} A_{k}\rto^-{\bigvee_{k=0}^{\infty} a_{k}} 
           & \bigvee_{k=0}^{\infty} A'_{k}\rto^-{\bigvee_{k=0}^{\infty} b_{k}} 
           & \bigvee_{k=0}^{\infty} B_{k} 
  \enddiagram 
\end{equation} 
that defines the spaces $D$ and $B_{k}$ and the maps $b_{k}$. In particular, the descriptions 
of $A_{1}$, $A'_{1}$ and the map $a_{1}$ imply that $B_{1}\simeq\Sigma(S^{1}\vee S^{3})$. 
Standard group homomorphisms give a commutative diagram 
\begin{equation} 
  \label{Uinfdgrm} 
  \diagram 
      SU(2)\rto\ddouble & SU(4)\rto^-{q}\dto & SU(4)/SU(2)\dto \\ 
      SU(2)\rto\dto & SU(\infty)\rto\dto & SU(\infty)/SU(2)\dto^{\cong} \\ 
      U(2)\rto & U(\infty)\rto & U(\infty)/U(2)  
  \enddiagram 
\end{equation} 
where the rows are homotopy fibration sequences. Combining~(\ref{Aidgrm}) and~(\ref{Uinfdgrm}) 
and rewriting $SU(4)/SU(2)$ as $S^{5}\times S^{7}$ gives a stably homotopy commutative diagram 
\begin{equation} 
  \label{CA''} 
  \diagram 
      SU(4)\rto^-{q}\dto & S^{5}\times S^{7}\rto^-{j}\dto & C\dto \\ 
      U(\infty)_{+}\rto\dto^{\simeq_{S}} & U(\infty)/U(2)_{+}\rto\dto^{\simeq_{S}} & D\dto^{\simeq_{S}} \\ 
      \bigvee_{k=0}^{\infty} A_{k}\rto^-{\bigvee_{k=0}^{\infty} a_{k}} 
           & \bigvee_{k=0}^{\infty} A'_{k}\rto^-{\bigvee_{k=0}^{\infty} b_{k}} 
           & \bigvee_{k=0}^{\infty} B_{k} 
  \enddiagram 
\end{equation}  

We draw two consequences from~(\ref{CA''}): Lemmas~\ref{stableC1} and~\ref{S57} below. First, let 
\(\imath\colon\namedright{S^{4}}{}{C}\) 
be the inclusion of the bottom cell. Since $S^{4}$ stably retracts off $B_{1}$, there 
is a stable composite 
\(\jmath\colon\namedddright{C}{}{D}{\simeq_{S}}{\bigvee_{k=1}^{\infty} B_{k}}{\mbox{pinch}}{B_{1}} 
      \longrightarrow S^{4}\).  

\begin{lemma} 
   \label{stableC1} 
   The map $\jmath$ is a stable left homotopy inverse for $\imath$, implying that there is a stable 
   homotopy equivalence $C\simeq_{S} S^{4}\vee C'$ where $C'$ is the homotopy cofibre of $\imath$. 
\end{lemma} 

\begin{proof} 
It is straightforward to see that the map 
\(\namedright{C}{}{D}\) 
induces an isomorphism in degree~$4$ cohomology. Thus $\jmath\circ\imath$ induces an 
isomorphism in homology and so is a homotopy equivalence. Thus $C\simeq_{S} S^{4}\vee C'$ 
where $C'$ is the homotopy cofibre of $\imath$. 
\end{proof} 

In general, for a $CW$-complex $X$ and a positive integer~$m$, let~$X_{m}$ be 
the $m$-skeleton of $X$. 

\begin{lemma} 
   \label{stableC11} 
   There is a stable homotopy equivalence $C_{11}\simeq S^{4}\vee S^{9}\vee S^{11}$. 
   Further, this equivalence can be chosen so that there is a homotopy commutative square 
   \[\diagram 
        C_{11}\rto\dto^{\simeq} & C\rto^-{\jmath} & S^{4}\ddouble \\ 
        S^{4}\vee S^{9}\vee S^{12}\rrto^-{\mbox{pinch}} & & S^{4}. 
      \enddiagram\] 
\end{lemma} 

\begin{proof} 
The description of $\rcohlgy{C;\mathbb{Z}}$ implies that $\rcohlgy{C';\mathbb{Z}}$ has 
$\mathbb{Z}$-generators in dimensions $9$, $11$ and~$16$. Therefore, as 
$\pi_{10}(S^{9})\cong\mathbb{Z}/2\mathbb{Z}$ and is generated by $\eta_{9}$, the $11$-skeleton 
of $C'$ is either $S^{9}\vee S^{11}$ or $\Sigma^{7}\mathbb{C}P^{2}$. The class $\eta_{9}$ is  
detected by the Steenrod operation $Sq^{2}$ in mod-$2$ cohomology. However, 
this acts trivially on $H^{9}(C';\mathbb{Z}/2\mathbb{Z})$ since the (stable) composite 
\(\nameddright{C'}{}{C}{\delta}{\Sigma SU(4)}\) 
induces an isomorphism on $H^{9}$ and $H^{11}$ and $Sq^{2}$ acts trivially 
on $H^{9}(\Sigma SU(4);\mathbb{Z}/2\mathbb{Z})$. Thus the $11$-skeleton of $C'$ is 
homotopy equivalent to $S^{9}\vee S^{11}$. Therefore Lemma~\ref{stableC1} implies 
that the $11$-skeleton of $C$ is stably homotopy equivalent to $S^{4}\vee S^{9}\vee S^{11}$. 

Next, let $F$ be the homotopy fibre of the composite 
\(\nameddright{C_{11}}{}{C}{\jmath}{S^{4}}\). 
Since this composite is degree~$1$ on the bottom cell, a cohomology Serre spectral sequence calculation 
shows that the $11$-skeleton of $F$ is $S^{9}\cup_{g} e^{11}$ for some attaching map $g$, 
and the restriction 
\(h\colon F_{11}\hookrightarrow\namedright{F}{}{C_{11}}\) 
is an epimorphism in integral and mod-$2$ cohomology. The map $g$ represents a class in 
$\pi_{10}(S^{9})\cong\mathbb{Z}/2\mathbb{Z}$, whose generator is detected by $Sq^{2}$. Since $h$ 
is an epimorphism in mod-$2$ cohomology, and $Sq^{2}$ vanishes in $C_{11}$ because it 
is homotopy equivalent to a wedge of spheres, $Sq^{2}$ also vanishes in $F_{11}$. 
Thus $F_{11}\simeq S^{9}\vee S^{11}$. Therefore the wedge sum 
\(\namedright{S^{4}\vee (S^{9}\vee S^{11})\simeq S^{4}\vee F_{11}}{\imath+h}{C_{11}}\) 
induces an isomorphism in cohomology and so is a homotopy equivalence. Since $h$ 
factors through the homotopy fibre of $\jmath$, the asserted homotopy commutative 
diagram exists. 
\end{proof} 

The second item to address from~(\ref{CA''}) is as follows. 
While it is well known that $S^{5}\times S^{7}$ is stably equivalent to 
$S^{5}\vee S^{7}\vee S^{12}$, we wish to make a particular choice of equivalence 
related to~(\ref{CA''}). The stable composite 
\(u\colon\nameddright{S^{5}\times S^{7}}{}{U(\infty)/U(2)_{+}}{\simeq_{S}}{\bigvee_{k=0}^{\infty} A'_{k}}\) 
factors through the $12$-skeleton of the range. Since 
$A_{1}'\cong\Sigma\mathbb{C}P^{\infty}/\Sigma\mathbb{C}P^{1}$, 
its $12$-skeleton is $\Sigma\mathbb{C}P^{5}/\Sigma\mathbb{C}P^{1}$. The 
cohomological calculations in~\cite[Theorem C]{K} imply that the $12$-skeleton of $(A'_{2})$ 
is $S^{12}$ while the $12$-skeleton of $A'_{k}$ for $k\geq 3$ is contractible. Thus~$u$ 
factors to give a homotopy commutative diagram
   \[\diagram 
        S^{5}\times S^{7}\rrto\dto^{u'} & & U(\infty)/U(2)_{+}\dto^{\simeq_{S}} \\ 
        (\Sigma\mathbb{C}P^{5}/\Sigma\mathbb{C}P^{1})\vee S^{12}\rto 
            & A'_{1}\vee A'_{2}\rto & \bigvee_{k=0}^{\infty} A'_{k} 
      \enddiagram\] 
for some map $u'$ that induces an epimorphism in cohomology. Note that 
$\Sigma\mathbb{C}P^{3}/\Sigma\mathbb{C}P^{1}\simeq S^{5}\vee S^{7}$ 
so composing into $\Sigma\mathbb{C}P^{5}/\Sigma\mathbb{C}P^{1}$ gives a map 
\(i'\colon\namedright{S^{5}\vee S^{7}}{}{\Sigma\mathbb{C}P^{5}/\Sigma\mathbb{C}P^{1}}\). 
The following lemma shows that $u'$ lifts through $i'\vee 1$. 

\begin{lemma} 
   \label{S57} 
   There is a stably homotopy commutative diagram 
   \[\diagram 
        S^{5}\times S^{7}\rdouble\dto^{\varepsilon} 
            & S^{5}\times S^{7}\rrto\dto^{u'} & & U(\infty)/U(2)_{+}\dto^{\simeq_{S}} \\ 
        (S^{5}\vee S^{7})\vee S^{12}\rto^-{i'\vee 1} 
            & (\Sigma\mathbb{C}P^{5}/\Sigma\mathbb{C}P^{1})\vee S^{12}\rto 
            & A'_{1}\vee A'_{2}\rto & \bigvee_{k=0}^{\infty} A'_{k} 
      \enddiagram\] 
   for some map $\varepsilon$, which is a homotopy equivalence. 
\end{lemma} 

\begin{proof} 
Observe that there is a homotopy cofibration 
\(\nameddright{S^{5}\vee S^{7}}{i'}{\Sigma\mathbb{C}P^{5}/\Sigma\mathbb{C}P^{1}}{}{S^{9}\vee S^{11}}\). 
So to show the existence of the lift of $u'$ it is equivalent to show that, stably, the composite 
\(\namedddright{S^{5}\times S^{7}}{u'}{(\Sigma\mathbb{C}P^{5}/\Sigma\mathbb{C}P^{1})\vee S^{12}} 
     {p_{1}}{\Sigma\mathbb{C}P^{5}/\Sigma\mathbb{C}P^{1}}{}{S^{9}\vee S^{11}}\) 
is null homotopic, where $p_{1}$ is the pinch map. 

Consider the diagram 
\[\diagram 
      U(4)/U(2)\rto & U(\infty)/U(2)\rto\dto^{\simeq S} & U(\infty)/U(4)\dto^{\simeq S} \\ 
      & \bigvee_{i=1}^{\infty} A'_{k}\rto^-{\bigvee_{k=1}^{\infty} a'_{k}}\dto^{p_{1}} 
            & \bigvee_{k=1}^{\infty} A''_{k}\dto^{p_{1}} \\ 
      & A'_{1}\rto^-{a'_{1}} & A''_{1} 
  \enddiagram\] 
where $p_{1}$ is the pinch map to the first wedge summand. The upper square stably homotopy 
commutes as it is the $r=t=2$ case of the right square in~(\ref{Utrstable}). The lower square 
commutes by the naturality of the pinch map. The top row is a fibration sequence, 
so the composite following the upper way around the diagram is null homotopic. Note that  
$U(4)/U(2)\simeq SU(4)/SU(2)\simeq S^{5}\times S^{7}$. Restricting to $12$-skeletons 
and noting that the restriction of $a'_{1}$ to $12$-skeletons is the collapse map  
\(\namedright{\Sigma\mathbb{C}P^{5}/\Sigma\mathbb{C}P^{1}}{} 
      {\Sigma\mathbb{C}P^{6}/\Sigma\mathbb{C}P^{4}\simeq S^{9}\vee S^{11}}\), 
the lower way around the diagram implies that the composite 
\(\namedddright{S^{5}\times S^{7}}{u'}{(\Sigma\mathbb{C}P^{5}/\Sigma\mathbb{C}P^{1})\vee S^{12}} 
     {p_{1}}{\Sigma\mathbb{C}P^{5}/\Sigma\mathbb{C}P^{1}}{}{S^{9}\vee S^{11}}\) 
is null homotopic, as required. 

Finally, since $u'$ induces an epimorphism in cohomology, do does the lift $\varepsilon$. But 
this implies that $\varepsilon$ induces an isomorphism in cohomology, and therefore an isomorphism in homology
by the universal coefficient theorem. Hence $\varepsilon$ is a 
homotopy equivalence by Whitehead's Theorem.
\end{proof} 

Next we consider the map 
\(\namedright{S^{5}\times S^{7}}{j}{C}\). 
Since $S^{5}\times S^{7}$ has dimension $12$, the map $j$ factors through the $12$-skeleton 
of $C$. Since $C$ has cells in dimensions $4$, $9$, $11$ and $16$, there is a homotopy  
equivalence $C_{12}\simeq C_{11}$. Thus $j$ factors as a composite 
\[\nameddright{S^{5}\times S^{7}}{j'}{C_{11}}{}{C}\] 
for some map $j'$. 

\begin{lemma} 
   \label{stablej1} 
   Stably, there is a homotopy commutative diagram 
   \[\diagram 
         S^{5}\times S^{7}\rto^-{j'}\dto^{\simeq_{S}} & C_{11}\dto^{\simeq_{S}} \\  
         S^{5}\vee S^{7}\vee S^{12}\rto^{\overline{j}}  
                 & S^{4}\vee S^{9}\vee S^{11}  
     \enddiagram\] 
   where $\overline{j}$ is the wedge sum of (i) 
   \(\llnamedright{S^{5}\vee S^{7}}{\eta_{4}+\nu'_{4}}{S^{4}}\) 
   and (ii) \(\llnamedright{S^{12}}{s\cdot\nu'_{9}+\eta_{11}}{S^{9}\vee S^{11}}\) 
   for some $s\in\mathbb{Z}/12\mathbb{Z}$.  
\end{lemma} 

\begin{proof} 
From the map 
\(\namedright{\Sigma\mathbb{C}P^{3}}{}{SU(4)}\) 
we obtain a homotopy commutative diagram 
\[\diagram 
      S^{3}\rto\dto & \Sigma\mathbb{C}P^{3}\rto\dto & S^{5}\vee S^{7}\dto^{i} \\ 
      SU(2)\rto & SU(4)\rto^-{q} & S^{5}\times S^{7} 
  \enddiagram\] 
where the top row is a cofibration sequence, the bottom row is a fibration sequence and $i$ is the inclusion. 
Starting from the right square, take homotopy cofibres horizontally. Recalling the homotopy 
cofibration in~(\ref{CP3cofib}) we obtain a homotopy cofibration diagram 
\[\diagram 
      \Sigma\mathbb{C}P^{3}\rto\dto & S^{5}\vee S^{7}\rto^-{\eta_{4}+\nu'_{4}}\dto & S^{4}\dto^{\iota} \\ 
      SU(4)\rto^-{q} & S^{5}\times S^{7}\rto^-{j} & C  
  \enddiagram\] 
For the remainder of the proof we work in the stable category (but retain unstable indexing 
for maps $\eta_{4}$, $\nu'_{4}$ to mesh better with later reference). There is a homotopy 
equivalence $S^{5}\times S^{7}\simeq S^{5}\vee S^{7}\vee S^{12}$. 
Since $\iota$ has a left homotopy inverse by Lemma~\ref{stableC1},
the restriction of $j$ to $S^{5}\vee S^{7}$ is homotopic to $\eta_{4}+\nu'_{4}$. 
As $j$ maps $S^5\times S^7$ to the $12$-skeleton of $C$, which is homotopy 
equivalent to the $11$-skeleton, on the top cell $j$ is a map 
\(\gamma\colon\namedright{S^{12}}{}{C_{12}\simeq_{S} S^{4}\vee S^{9}\vee S^{11}}\).
Furthermore, $\gamma$ is homotopic to $\gamma_{4}+\gamma_{9}+\gamma_{11}$ 
where $\gamma_{i}$ is~$\gamma$ composed with the pinch map to $S^{i}$. 
We now identify $\gamma_{4}$, $\gamma_{9}$ and $\gamma_{11}$. 

For $\gamma_{4}$, consider the diagram 
\[\spreaddiagramrows{-0.1pc} 
  \diagram 
     & S^{5}\times S^{7}\rto^-{\gamma}\ddouble 
          & C_{12}\rto^-{\simeq}\dto & S^{4}\vee S^{9}\vee S^{11}\xto[dddd]^{\mbox{pinch}} \\ 
     & S^{5}\times S^{7}\rto^-{j}\dto\xto[ddl]_{\varepsilon} & C\dto & \\ 
     & U(\infty)/U(2)_{+}\rto\dto^{\simeq_{S}} & D\dto & \\ 
     (S^{5}\vee S^{7})\vee S^{12}\rto\dto^{q_{1}} 
           & \bigvee_{k=0}^{\infty} A'_{k}\rto^-{\bigvee_{k=0}^{\infty} b_{k}}\dto^{q_{1}} 
          & \bigvee_{k=0}^{\infty} B_{k}\dto^{q_{1}} & \\ 
      S^{5}\vee S^{7}\rto & A'_{1}\rto^-{b_{1}} & B_{1}\rto & S^{4}.    
  \enddiagram\] 
The left triangle homotopy commutes by Lemma~\ref{S57}. The lower left square 
commutes since the map 
\(\namedright{(S^{5}\vee S^{7})\vee S^{12}}{}{\bigvee_{k=0}^{\infty} A'_{k}}\) 
factors through the inclusion of $A'_{1}\vee A'_{2}$ and the pinch map $q_1$ is natural.
The upper middle square homotopy commutes by definition 
of $\gamma$. The middle column otherwise homotopy commutes by~(\ref{CA''}), 
and the right rectangle homotopy commutes by Lemma~\ref{stableC11}. Precomposing 
with the composite 
\(S^{12}\hookrightarrow\namedright{S^{5}\vee S^{7}\vee S^{12}}{\varepsilon^{-1}}{S^{5}\times S^{7}}\), 
the lower direction around the diagram is null homotopic, while the upper direction around 
the diagram is the definition of $\gamma_{4}$. Hence $\gamma_{4}$ is null homotopic. 

Observe that $\gamma_{9}$ and $\gamma_{11}$ are multiples of the generators 
$\nu_{9}$ and $\eta_{9}$ respectively. The $2$-primary component of the class $\nu_{9}$ 
and the whole class $\eta_{9}$ are detected by the Steenrod operations $Sq^{4}$ 
and $Sq^{2}$ respectively. As $\gamma$ is a restriction of $j$, the detection is 
determined by the Steenrod operations in $\cohlgy{\Sigma SU(4);\mathbb{Z}/2\mathbb{Z}}$. 
But $Sq^{2}$ and $Sq^{4}$ act trivially on $H^{9}$ while $Sq^{2}$ acts nontrivially on $H^{11}$. 
As~$\nu_{9}$ has order $24$, this implies that $\gamma_{9}\simeq 2s\cdot\nu_{9}$ for some 
$s\in\mathbb{Z}/24\mathbb{Z}$ and $\gamma_{11}\simeq\eta_{11}$. 
As $2\cdot\nu_{9}\simeq\nu'_{9}$, we obtain $\gamma_{9}\simeq s\cdot\nu'_{9}$ where 
we may now regard $s$ as an element of $\mathbb{Z}/12\mathbb{Z}$.  
Thus the restriction of $j$ to $S^{12}$ is $s\cdot\nu'_{9}+\eta_{11}$. 
\end{proof} 

Next, we return to the stable decomposition of $C$, which will use Lemma~\ref{stablej1}. 

\begin{lemma} 
   \label{stableC} 
   There is a stable homotopy equivalence $C\simeq_{S} S^{4}\vee S^{9}\vee S^{11}\vee S^{16}$. 
\end{lemma} 

\begin{proof} 
Throughout the proof we work in the stable category. 
By Lemma~\ref{stableC11}, there is a homotopy equivalence 
$C_{11}\simeq S^{4}\vee S^{9}\vee S^{11}$. Therefore there is a homotopy cofibration 
\[\nameddright{S^{15}}{f}{S^{4}\vee S^{9}\vee S^{11}}{}{C}\] 
where $f$ attaches the top cell of $C$. We have $f\simeq f_{4}+f_{9}+f_{11}$, where $f_{i}$ is $f$ 
composed with the pinch map to $S^{i}$. Since $S^{4}$ retracts off $C$, $f_{4}$ is null homotopic. 
Since the stable $4$-stem is zero, $f_{11}$ is null homotopic. Since the stable 
$6$-stem is $\mathbb{Z}/2\mathbb{Z}$, generated by $\nu^{2}$, we have $f_{9}\simeq t\cdot\nu^{2}_{9}$ 
for some $t\in\mathbb{Z}/2\mathbb{Z}$. We will show that $f_{9}$ is null homotopic. If so, then $f$ 
is null homotopic and the asserted homotopy equivalence for $C$ follows. 

It remains to show that $f_{9}$ is null homotopic. Since $S^{5}\times S^{7}$ is $12$-dimensional, the map 
\(\namedright{S^{5}\times S^{7}}{j}{C}\) 
factors through the $12$-skeleton $C_{12}$ of $C$. From this we obtain a homotopy pushout diagram 
\[\diagram 
     & S^{15}\rdouble\dto^{f} & S^{15}\dto^{\delta'\circ f} \\ 
     S^{5}\times S^{7}\rto^-{j'}\ddouble & C_{12}\rto^-{\delta'}\dto & (\Sigma SU(4))_{13}\dto \\ 
     S^{5}\times S^{7}\rto^-{j} & C\rto^-{\delta} & \Sigma SU(4) 
 \enddiagram\] 
where $j'$ is the factorization of $j$ through the $12$-skeleton (which is homotopy equivalent 
to $C_{11}$ and therefore consistent with the notation in Lemma~\ref{stablej1}) and $\delta'$ is the 
restriction of $\delta$. In particular, this identifies the attaching map for the top cell of 
$\Sigma SU(4)$ as $\delta'\circ f$. By Theorem~\ref{Miller}, the top cell of $SU(4)$ retracts off, 
so $\delta'\circ f$ is null homotopic. Therefore $f$ lifts through~$j'$. 
Using the homotopy equivalences
$S^{5}\times S^{7}\simeq S^{5}\vee S^{7}\vee S^{12}$ 
and, from Lemma~\ref{stableC11}, $C_{12}\simeq C_{11}\simeq S^{4}\vee S^{9}\vee S^{11}$,  
we obtain a homotopy commutative diagram
\[\diagram 
     & S^{5}\vee S^{7}\vee S^{12}\rto^-{p}\dto^{j'} & S^{12}\dto^{g} \\ 
     S^{15}\rto^-{f}\urto^-{f'} & S^{4}\vee S^{9}\vee S^{11}\rto^-{p'} & S^{9} 
  \enddiagram\] 
where $f'$ is a lift of $f$ through $j'$, $p$ and $p'$ are pinch maps to wedge summands, 
$g$ is the restriction of $p'\circ j'$ to $S^{12}$, and the right square homotopy commutes by 
connectivity. The lower row 
is the definition of $f_{9}$. Thus $f_{9}\simeq g\circ p\circ f'$. The composite $p\circ f'$ is 
some multiple of the class~$\nu_{12}$. By Lemma~\ref{stablej1}, the restriction 
of $j'$ to $S^{12}$ is $s\cdot\nu'_{9}+\eta_{11}$; therefore $g\simeq s\cdot\nu'_{9}$. Thus 
$f_{9}\simeq g\circ p\circ f'$ is some multiple of $\nu'_{9}\circ\nu_{12}$. But $\nu'_{9}\simeq 2\nu_{9}$ 
and $\nu^{2}_{9}$ has order $2$, so $f_{9}$ is null homotopic, as required. 
\end{proof} 

Combining Lemmas~\ref{stablej1} and~\ref{stableC} gives the following. 

\begin{proposition} 
   \label{Cjstable} 
   Stably, there is a homotopy commutative diagram 
   \[\diagram 
         S^{5}\times S^{7}\rto^-{j}\dto^{\simeq_{S}} & C\dto^{\simeq_{S}} \\  
         S^{5}\vee S^{7}\vee S^{12}\rto^{\overline{j}}  
                 & S^{4}\vee S^{9}\vee S^{11}\vee S^{16} 
     \enddiagram\] 
   where $\overline{j}$ is the wedge sum of (i) 
   \(\llnamedright{S^{5}\vee S^{7}}{\eta_{4}+\nu'_{4}}{S^{4}}\)  
   and (ii) \(\llnamedright{S^{12}}{s\cdot\nu'_{9}+\eta_{11}}{S^{9}\vee S^{11}}\) 
   for some $s\in\mathbb{Z}/12\mathbb{Z}$. 
\end{proposition} 
\vspace{-5mm}$\qqed$ 
 
Define $M$ by the homotopy cofibration 
\[\llnameddright{S^{11}}{s\cdot\nu'_{8}+\eta_{10}}{S^{8}\vee S^{10}}{}{M}\] 
where $s$ is as in Proposition~\ref{Cjstable}. 

\begin{theorem} 
   \label{Millernat2} 
   Stably, there is a homotopy commutative diagram 
   \[\diagram 
         SU(4)\rto^{q}\dto^{\simeq_{S}} & S^{5}\times S^{7}\dto^-{\simeq_{S}} \\ 
          \Sigma\cpthree\vee M\vee S^{15}\rto^{\overline{q}}  
              & S^{5}\vee S^{7}\vee S^{12}.  
     \enddiagram\] 
   where $\overline{q}$ is the wedge sum of: (i) the map  
   \(\namedright{\Sigma\cpthree}{}{S^{5}\vee S^{7}}\) 
   that collapses the bottom cell, (ii) the pinch map 
   \(\lnamedright{M}{}{S^{12}}\) 
   to the top cell, and (iii) the trivial map 
   \(\namedright{S^{15}}{}{\ast}\).~$\qqed$  
\end{theorem} 

\begin{proof} 
Recall that there are homotopy cofibration sequences 
\(\namedddright{SU(4)}{q}{S^{5}\times S^{7}}{j}{C}{\delta}{\Sigma SU(4)}\), and from~(\ref{CP3cofib}), \(\llnameddright{S^{6}}{\nu'_{3}+\eta_{5}}{S^{3}\vee S^{5}}{}{\Sigma\mathbb{C}P^{3}}\). 
Together with the definition of $M$, from Proposition~\ref{Cjstable} we obtain a homotopy commuting diagram in the stable category
\[\diagram 
      S^{5}\times S^{7}\rto^-{j}\dto^{\simeq_{S}} & C\rto^-{\delta}\dto^{\simeq_{S}} 
           & \Sigma SU(4)\rto^-{\Sigma q}\dto^{\theta} & \Sigma(S^{5}\times S^{7})\dto^{\simeq_{S}} \\  
      S^{5}\vee S^{7}\vee S^{12}\rto^{\overline{j}} 
           & S^{4}\vee S^{9}\vee S^{11}\vee S^{16}\rto 
           & \Sigma^{2}\mathbb{C}P^{3}\vee\Sigma M\vee S^{16}\rto^-{\widetilde{q}} 
           & S^{6}\vee S^{8}\vee S^{13} 
  \enddiagram\] 
where the rows are cofibration sequences, $\theta$ is an induced map of cofibres, and $\widetilde{q}$ collapses out the 
bottom cell of $\Sigma^{2}\mathbb{C}P^{3}$, pinches $\Sigma M$ to its top cell and 
collapses out the $S^{16}$. The Five Lemma implies that $\theta$ induces 
an isomorphism in homology and so is a stable homotopy equivalence. Each of the 
maps describing $\widetilde{q}$ desuspends, so $\widetilde{q}\simeq\Sigma\overline{q}$. 
Thus the right square is a suspension, and as we are working stably, it 
may be desuspended to give the asserted homotopy commutative diagram. 
\end{proof} 

\begin{remark} 
The stable homotopy equivalence for $SU(4)$ in Theorem~\ref{Millernat2} may be 
different from the one in Theorem~\ref{Miller}, in the sense that the maps realizing the 
decomposition may be non-homotopic. The space $M$ in Theorem~\ref{Millernat2} 
is homotopy equivalent to the space $N$ in Theorem~\ref{Miller}, as there is a stable 
map between them inducing an isomorphism in homology, so the description of $M$ 
as the homotopy cofibre of $s\cdot\nu'_{8}+\eta_{10}$ also describes $N$ more precisely. 
\end{remark}

\section{The triple suspension of $C$ and $j$} 
\label{sec:cofib} 

For the remainder of the paper all spaces and maps will be localized at $2$. This 
corresponds to the fact from Section~\ref{sec:initialbound} that we are reduced to proving the
$2$-primary statement in Theorem~\ref{looppartialorder2}. 

The stable decomposition of $C$ in Lemma~\ref{stableC} will be useful but we will 
ultimately need to work with unstable information in the form of the homotopy type 
of $\Sigma^{3} C$ and the homotopy class of $\Sigma^{3} j$. We start 
with the homotopy type of $\Sigma^{3} C$. The $CW$-structure for $C$ implies 
that there are homotopy cofibrations 
\begin{align} 
   \label{Ccofib1} 
   & \nameddright{S^{8}}{g_{1}}{S^{4}}{}{C_{9}} \\ 
   \label{Ccofib2} 
   & \nameddright{S^{10}}{g_{2}}{C_{9}}{}{C_{11}} \\ 
   \label{Ccofib3} 
   & \nameddright{S^{15}}{g_{3}}{C_{11}}{}{C} 
\end{align}
 
\begin{lemma} 
   \label{C9decomp} 
   There is a homotopy equivalence $\Sigma^{2} (C_{9})\simeq S^{6}\vee S^{11}$.   
\end{lemma}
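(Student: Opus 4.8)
The plan is to analyze the attaching map $g_1\colon S^8\to S^4$ from the cofibration~(\ref{Ccofib1}) and show that after two suspensions it becomes null homotopic, so that $\Sigma^2(C_9)\simeq \Sigma^2 S^4\vee\Sigma^2 S^9=S^6\vee S^{11}$. Since $C_9$ is the $9$-skeleton of $C$ with one cell in dimension $4$ and one in dimension $9$, it is the mapping cone of some map $g_1\in\pi_8(S^4)$. The group $\pi_8(S^4)$ localized at $2$ is $\mathbb{Z}/2\mathbb{Z}\oplus\mathbb{Z}/2\mathbb{Z}$, generated by $\nu_4\circ\eta_7$ (equivalently $\eta_4\circ\nu_5$ after the relevant identification) and by $E\nu'=\Sigma\nu'_3$-type classes, i.e.\ by $\nu'_4\eta_7$ and $\eta_4\nu_5$; more precisely one uses Toda's computation that $\pi_8^S(S^4)$-worth of this is detected by $\eta$-multiples. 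The point is that $\Sigma^2\colon\pi_8(S^4)\to\pi_{10}(S^6)$ is the suspension into the stable range, where $\pi_2^s=\mathbb{Z}/2\mathbb{Z}$ generated by $\eta^2$, and the relevant unstable classes here are killed.

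First I would pin down which class $g_1$ is. Proposition~\ref{Cstable} tells us that stably $C\simeq_S S^4\vee S^9\vee S^{11}\vee S^{16}$, so $g_1$ is stably trivial; hence $g_1$ lies in the kernel of the stabilization map $\pi_8(S^4)\to\pi_4^s$, i.e.\ $g_1$ is a suspension of a class in the EHP-sequence image, concretely $g_1\in\{\Sigma\nu',\ldots\}$. In fact the $CW$-structure of $C$ together with~(\ref{CP3cofib}) and Theorem~\ref{Miller} identifies $g_1$ as (a $2$-local unit multiple of) $\Sigma(\nu'_3\circ\cdots)$ — the $4$-cell of $C$ is $\sigma x$ and the $9$-cell is $\sigma xy$, so the attaching map records the $\nu'$-type attachment in $SU(4)$ relating the $3$-cell to the $8$-cell, suspended once. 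Since $\nu'_3$ has order $4$ and $\Sigma^2\nu'_3=\nu'_5=2\nu_5$ in $\pi_8(S^5)$, but more to the point $g_1$ stabilizes to zero, $\Sigma^\infty g_1=0$; and because $g_1$ already dies stably, it suffices to reach the stable range, which for $\pi_*(S^4)$ in this degree happens after two suspensions (the first Hopf invariant one obstruction $\nu_4$ desuspends, but after $\Sigma^2$ we are at $\pi_{10}(S^6)$, inside the range $n\geq 2\cdot 6 - 1$... here $8 < 2\cdot 6-1=11$, so $\Sigma^2 g_1\in\pi_{10}(S^6)$ is in the stable range and equals $\Sigma^\infty g_1=0$).

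Concretely, the key steps are: (1) identify $g_1\in\pi_8(S^4)$ from the $CW$-structure of $C$ and the known attaching maps of $SU(4)$, showing it is a suspension class that is stably trivial (using Proposition~\ref{Cstable}); (2) observe $\pi_{10}(S^6)\cong\pi_2^s\cong\mathbb{Z}/2\mathbb{Z}$ is in the stable range, so $\Sigma^2\colon\pi_8(S^4)\to\pi_{10}(S^6)$ factors through stabilization; (3) conclude $\Sigma^2 g_1=\Sigma^\infty g_1=0$, hence $\Sigma^2(C_9)=\Sigma^2(\mathrm{cone}(g_1))\simeq S^6\vee S^{11}$.

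I expect the main obstacle to be step (1): making the identification of $g_1$ precise enough to be sure it is stably trivial rather than merely of finite order. The cleanest route is to avoid identifying $g_1$ on the nose and instead argue purely from Proposition~\ref{Cstable}: that proposition gives a stable equivalence $C\simeq_S S^4\vee S^9\vee S^{11}\vee S^{16}$ which restricts to a stable equivalence on skeleta $C_9\simeq_S S^4\vee S^9$, forcing $\Sigma^\infty g_1=0$; then since $8<2\cdot 6-1$ the double suspension $\Sigma^2 g_1\in\pi_{10}(S^6)$ is a stable element, so it is zero, and the cofibration~(\ref{Ccofib1}) suspended twice splits as $S^6\vee S^{11}$. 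A minor subtlety is checking that the stable splitting of $C$ from Proposition~\ref{Cstable} is compatible with the skeletal filtration, i.e.\ that the wedge summands $S^4$ and $S^9$ are carried by $C_9$; this follows because a stable equivalence respects the bottom cells and the $9$-skeleton is detected by $H_{\leq 9}$, so no real difficulty arises there.
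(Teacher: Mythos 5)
Your conclusion is right, and your fallback argument does reach it, but your identification of the relevant group is off by two stems and this makes the argument much longer than it needs to be. The class $\Sigma^{2}g_{1}$ lives in $\pi_{10}(S^{6})$, which is the $4$-stem ($10-6=4$), not the $2$-stem: it is in the stable range (Freudenthal, since $10\leq 2\cdot 6-2$) and equals $\pi_{4}^{s}=0$, not $\pi_{2}^{s}\cong\mathbb{Z}/2\mathbb{Z}\{\eta^{2}\}$ as you assert. The paper's proof is exactly this one line: by Toda \cite{To}, $\pi_{10}(S^{6})=0$, so $\Sigma^{2}g_{1}$ is null homotopic and the suspended cofibration~(\ref{Ccofib1}) splits. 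Because the group is already zero, there is no need to identify $g_{1}$, no need to invoke Proposition~\ref{Cstable}, and no need for the ``minor subtlety'' you wave at, namely that the stable splitting of $C$ restricts to one of $C_{9}$ --- that point is not automatic in general and would require an actual argument if the target group were nonzero (the paper does lean on such a skeletal argument later, for $C_{11}$ and $g_{2}$, where the relevant groups do not vanish). Your logical skeleton (``$g_{1}$ is stably trivial and $\Sigma^{2}g_{1}$ is a stable class, hence zero'') is sound, and in this instance the stable triviality of $g_{1}$ is free since $\pi_{8}^{s}(S^{4})=\pi_{4}^{s}=0$; but as written your step (2) records a false statement about $\pi_{10}(S^{6})$, and your discussion of $\pi_{8}(S^{4})$ conflates $\nu_{4}\eta_{7}$ with $\eta_{4}\nu_{5}$ (Toda's relation $\eta_{3}\nu_{4}=\nu'_{3}\eta_{6}$ identifies $\eta_{4}\nu_{5}$ with $\nu'_{4}\eta_{7}$, a different generator). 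None of the argument ultimately rests on these misstatements, but they should be corrected, and the whole proof collapses to the paper's observation that the obstruction group vanishes.
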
 

\begin{proof} 
By~\cite[Proposition 5.8]{To}, $\pi_{10}(S^{6})=0$, so the map $\Sigma^{2} g_{1}$ 
in~(\ref{Ccofib1}) is null homotopic. The asserted homotopy equivalence 
for $\Sigma^{2} (C_{9})$ follows immediately. 
\end{proof} 

\begin{lemma} 
   \label{C11decomp} 
   There is a homotopy equivalence  
   $\Sigma^{2} (C_{11})\simeq S^{6}\vee S^{11}\vee S^{13}$.   
\end{lemma}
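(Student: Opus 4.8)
The plan is to compute $\Sigma^2(C_{11})$ by analyzing the cofibration $(\ref{Ccofib2})$ after two suspensions, using the decomposition from Lemma~\ref{C9decomp}. Suspending $(\ref{Ccofib2})$ twice gives a cofibration $\nameddright{S^{12}}{\Sigma^2 g_2}{\Sigma^2(C_9)}{}{\Sigma^2(C_{11})}$, and Lemma~\ref{C9decomp} identifies $\Sigma^2(C_9)\simeq S^6\vee S^{11}$. So the task reduces to understanding the attaching map $\Sigma^2 g_2\colon S^{12}\to S^6\vee S^{11}$, i.e. its two components in $\pi_{12}(S^6)$ and $\pi_{12}(S^{11})$.

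First I would pin down these components. The component in $\pi_{12}(S^{11})\cong\mathbb{Z}/2\mathbb{Z}$ (generated by $\eta_{11}$) is forced by the stable decomposition: Proposition~\ref{Cstable} shows that stably the relevant piece of $\overline{j}$ involves $\nu'_9+\eta_{11}$, and the cell in dimension $11$ of $C$ is attached to the $9$-cell by (the stabilization of) $\eta$. Since $\eta$ is a stable class, the $\eta_{11}$-component of $\Sigma^2 g_2$ is already visible stably and hence present unstably. For the component in $\pi_{12}(S^6)$, note $\pi_{12}(S^6)\cong\mathbb{Z}/2\mathbb{Z}$ generated by $\nu_6\eta_9$ (using Toda); I would determine whether it is zero or not by tracing through the construction of $C$ — the $11$-cell of $C$ corresponds to $\sigma xz$, and its attachment to the bottom cell $\sigma x$ is governed by the $\nu'$ in the cofibration $(\ref{CP3cofib})$ for $\Sigma\cpthree$ together with the multiplicative structure; after two suspensions $\nu'_4$ suspends to $\nu'_6 = 2\nu_6$, so the relevant secondary composite lands in the image of $\nu_6\circ(\text{something in }\pi_9(S^6))$. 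I expect this $S^6$-component to vanish — either because the composite is $\nu'_6\circ\eta_9$-type and $\nu'\eta$ considerations kill it, or because $2\nu_6\eta_9 = 0$ in $\pi_{12}(S^6)$ — but this needs to be checked against Toda's tables.

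Granting that $\Sigma^2 g_2$ has trivial $S^6$-component, the map factors (up to homotopy) through $S^{11}$ as $\eta_{11}\colon S^{12}\to S^{11}$. Then the cofibre $\Sigma^2(C_{11})$ is the wedge $S^6\vee(S^{11}\cup_{\eta_{11}} e^{13})$; but $S^{11}\cup_{\eta}e^{13}$ is $\Sigma^{8}(S^3\cup_\eta e^5) = \Sigma^9 \mathbb{C}P^2$, which is certainly not a wedge of spheres — so if this were the situation the claimed equivalence would fail. Therefore the correct expectation must be that $\Sigma^2 g_2$ is \emph{null homotopic} after all. The honest approach: compute $\pi_{12}(S^6\vee S^{11}) \cong \pi_{12}(S^6)\oplus\pi_{12}(S^{11})$, show the $S^{11}$-component actually vanishes unstably (the stable $\eta_{11}$ must be there, so one more suspension is needed — reexamine whether Lemma~\ref{C11decomp} perhaps should read $\Sigma^3$ rather than $\Sigma^2$, or whether the $\eta_{11}$ is cancelled by interaction with the $6$-cell through a Toda-bracket relation making $\Sigma^2 g_2$ homotopic into a single sphere where it becomes null). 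The cleanest route, assuming the statement as given, is: $\pi_{12}(S^6)=\mathbb{Z}/2$ and $\pi_{12}(S^{11})=\mathbb{Z}/2$, and I would argue by a Steenrod-square / cohomology-operation computation on $\Sigma^2 C_{11}$ (whose $\mathbb{Z}/2\mathbb{Z}$-cohomology has classes in degrees $6,11,13$) that all relevant $Sq^i$ connecting these classes vanish, forcing $\Sigma^2 g_2 \simeq *$ and hence $\Sigma^2(C_{11})\simeq S^6\vee S^{11}\vee S^{13}$.

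The main obstacle is exactly this determination of $\Sigma^2 g_2 \in \pi_{12}(S^6\vee S^{11})$: the $\eta_{11}$-component is stably nonzero, so proving it dies unstably (or is absorbed) requires a genuine unstable argument — most likely a careful comparison with the cell structure of $SU(4)$ and $\Sigma\cpthree$ via the cofibration $(\ref{Ccofib})$, pulling back the attaching data through $\delta\colon C\to\Sigma SU(4)$ and using that the $8$-, $10$-, $12$-cells of $\Sigma SU(4)$ carry known attaching maps (built from $\eta$ and $\nu'$ as in Theorem~\ref{Miller}). I would also double-check against Toda's composition tables at each step; the whole lemma hinges on a handful of entries in $\pi_{*}(S^{6})$ and $\pi_{*}(S^{11})$ in the relevant low stems.
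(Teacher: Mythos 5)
Your setup (suspend the cofibration (\ref{Ccofib2}), use Lemma~\ref{C9decomp}, split $\Sigma^{2}g_{2}$ into its components in $\pi_{12}(S^{6})$ and $\pi_{12}(S^{11})$ via Hilton--Milnor) matches the paper, but the core of your argument rests on a misreading of Proposition~\ref{Cstable}, and the fallback you propose does not close the gap. The classes $\nu'_{9}$ and $\eta_{11}$ appearing in Proposition~\ref{Cstable} are components of the map $\overline{j}$, i.e.\ of the stable class of $j\colon S^{5}\times S^{7}\to C$; they are \emph{not} attaching maps inside $C$. The actual content of Proposition~\ref{Cstable} that is needed here is that $C$ itself is stably homotopy equivalent to the wedge $S^{4}\vee S^{9}\vee S^{11}\vee S^{16}$, so $C_{11}$ is stably a wedge of spheres and the attaching map $g_{2}$ is \emph{stably trivial}. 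Your claim that ``the $\eta_{11}$-component of $\Sigma^{2}g_{2}$ is already visible stably and hence present unstably'' is therefore false, and it is what led you to doubt the statement (wondering whether $\Sigma^{3}$ is needed) instead of proving it. The paper's argument is: both components $a\in\pi_{12}(S^{6})\cong\mathbb{Z}/2\mathbb{Z}$ and $b\in\pi_{12}(S^{11})\cong\mathbb{Z}/2\mathbb{Z}$ are stably trivial for dimensional reasons, and the generators $\nu_{6}^{2}$ and $\eta_{11}$ of these groups are stable classes (they inject into the stable stems), so stable triviality forces $a=b=0$ and hence $\Sigma^{2}g_{2}\simeq\ast$.

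Two further points. First, your identification of the generator of $\pi_{12}(S^{6})$ as $\nu_{6}\eta_{9}$ is wrong: by Toda (cf.\ Lemma~\ref{Todarelns}(d)) $\nu_{6}\eta_{9}=0$; the generator is $\nu_{6}^{2}$. Second, the ``cleanest route'' you propose --- showing all Steenrod squares on $H^{*}(\Sigma^{2}C_{11})$ vanish and concluding $\Sigma^{2}g_{2}\simeq\ast$ --- cannot work as stated: primary operations can at best detect the $\eta_{11}$-component (via $Sq^{2}$), while $\nu_{6}^{2}$ is not detected by any primary Steenrod operation, so vanishing of the $Sq^{i}$ does not force the $S^{6}$-component to be trivial. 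The stable-splitting argument above is what actually kills both components simultaneously, and it is the missing idea in your proposal.
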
 

\begin{proof} 
Substituting the homotopy equivalence in Lemma~\ref{C9decomp} into the 
double suspension of~(\ref{Ccofib2}) gives a homotopy cofibration 
\(\nameddright{S^{12}}{\Sigma^{2} g_{2}}{S^{6}\vee S^{11}}{}{\Sigma^{2} (C_{11})}\). 
By the Hilton-Milnor Theorem, $\Sigma^{2} g_{2}\simeq a+b$ where 
$a$ and $b$ are obtained by composing $\Sigma^{2} g_{2}$ with the 
pinch maps to $S^{6}$ and $S^{11}$ respectively. We claim that each of $a$ 
and $b$ is null homotopic, implying that $\Sigma^{2} g_{2}$ is null 
homotopic, from which the asserted homotopy equivalence for $\Sigma^{2} (C_{11})$ 
follows immediately. 

By Lemma~\ref{stableC11}, $C_{11}$ is stably homotopy equivalent to a 
wedge of spheres. Thus $g_{2}$ is stably 
trivial, implying that $a$ and $b$ are as well. On 
the other hand, $a$ and $b$ are represented by classes in 
$\pi_{12}(S^{6})\cong\mathbb{Z}/2\mathbb{Z}$ 
and $\pi_{12}(S^{11})\cong\mathbb{Z}/2\mathbb{Z}$ respectively. 
By~\cite[Propositions~5.1 and~5.11]{To}, 
these groups are generated by $\nu_{6}^{2}$ and $\eta_{11}$, both of which 
are stable. Thus the only way that~$a$ and $b$ can be stably trivial is if 
both are already trivial. Hence $\Sigma^{2} g_{2}$ is null homotopic. 
\end{proof} 

\begin{lemma} 
   \label{Cdecomp} 
   There is a homotopy equivalence 
   $\Sigma^{3} C\simeq E\vee S^{12}\vee S^{14}$ 
   where $E$ is given by a homotopy cofibration 
   \(\llnameddright{S^{18}}{u\cdot\bar{\nu}_{7}\nu_{15}}{S^{7}}{}{E}\) 
   for some $u\in\mathbb{Z}/2\mathbb{Z}$. 
\end{lemma}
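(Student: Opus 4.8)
The plan is to realise $\Sigma^{3}C$ as the cofibre of a map $S^{18}\to S^{7}\vee S^{12}\vee S^{14}$ and then to pin that map down. First I would suspend the cofibration~(\ref{Ccofib3}) three times. Suspending the homotopy equivalence of Lemma~\ref{C11decomp} gives $\Sigma^{3}(C_{11})\simeq S^{7}\vee S^{12}\vee S^{14}$, so $\Sigma^{3}C$ is the cofibre of a map $\Sigma^{3}g_{3}\colon S^{18}\to S^{7}\vee S^{12}\vee S^{14}$. The key observation is that, with respect to this equivalence, $\Sigma^{3}g_{3}$ is itself a suspension: it is the suspension of $\Sigma^{2}g_{3}\in\pi_{17}(\Sigma^{2}C_{11})\cong\pi_{17}(S^{6}\vee S^{11}\vee S^{13})$. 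Since suspensions of Whitehead products are null-homotopic, the suspension homomorphism carries the Hilton decomposition of $\pi_{17}(S^{6}\vee S^{11}\vee S^{13})$ into the weight-one part of $\pi_{18}(S^{7}\vee S^{12}\vee S^{14})$. Hence $\Sigma^{3}g_{3}\simeq a+b+c$ with $a\in\pi_{18}(S^{7})$, $b\in\pi_{18}(S^{12})$ and $c\in\pi_{18}(S^{14})$, no Whitehead-product term appearing. Since $\pi_{18}(S^{14})\cong\pi^{s}_{4}=0$ by~\cite{To}, we get $c=0$.

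Next I would use the stable splitting to eliminate $b$ and to constrain $a$. Proposition~\ref{Cstable} shows that $C$ is stably a wedge of spheres, and since that splitting is compatible with the skeletal filtration it exhibits the top cell of $C$ as a stable wedge summand; therefore $g_{3}$, and hence $\Sigma^{3}g_{3}$, is stably trivial. As $\pi_{18}(S^{12})\cong\pi^{s}_{6}\cong\mathbb{Z}/2\mathbb{Z}$ lies in the stable range and is generated by the stably essential class $\nu^{2}_{12}$, stable triviality of $\Sigma^{3}g_{3}$ forces $b=0$; and $a$ must lie in the kernel of the stabilisation homomorphism $\pi_{18}(S^{7})\to\pi^{s}_{11}$. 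By Toda's computations~\cite{To} this kernel is $\mathbb{Z}/2\mathbb{Z}$, generated by $\bar{\nu}_{7}\nu_{15}$; thus $a=s\cdot\bar{\nu}_{7}\nu_{15}$ for some $s\in\mathbb{Z}/2\mathbb{Z}$. Consequently $\Sigma^{3}g_{3}$ is the composite of $s\cdot\bar{\nu}_{7}\nu_{15}$ with the inclusion $S^{7}\hookrightarrow S^{7}\vee S^{12}\vee S^{14}$, and passing to cofibres gives $\Sigma^{3}C\simeq(S^{7}\cup_{s\cdot\bar{\nu}_{7}\nu_{15}}e^{19})\vee S^{12}\vee S^{14}=E\vee S^{12}\vee S^{14}$, as asserted.

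The main obstacle is the identification of the kernel of $\pi_{18}(S^{7})\to\pi^{s}_{11}$. I would compute $\pi_{18}(S^{7})$ by running the EHP sequence up from $S^{7}$: the relevant error terms $\pi_{20}(S^{15})\cong\pi^{s}_{5}$ and $\pi_{19}(S^{15})\cong\pi^{s}_{4}$ both vanish, so $\pi_{18}(S^{7})\to\pi_{19}(S^{8})$ is already an isomorphism, after which one tracks the single order-two class that fails to desuspend all the way to $\pi^{s}_{11}$; alternatively one reads the group and the stabilisation map directly off Toda's tables. A secondary point that needs care is the justification that the stable decomposition of $C$ from Proposition~\ref{Cstable} respects the skeletal filtration, which is what makes $g_{3}$ stably trivial; this should follow from the filtered nature of Miller's (and Crabb--Kitchloo's) splitting of Stiefel manifolds.
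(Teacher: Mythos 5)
Your proposal is correct and follows essentially the same route as the paper: suspend the cofibration for the top cell, use Hilton--Milnor together with the vanishing of the suspended Whitehead product to write $\Sigma^{3}g_{3}$ as a sum of classes in $\pi_{18}(S^{7})$, $\pi_{18}(S^{12})$, $\pi_{18}(S^{14})$, and then use the stable splitting of $C$ from Proposition~\ref{Cstable} plus Toda's computation of $\pi_{18}(S^{7})$ to conclude that only $s\cdot\bar{\nu}_{7}\nu_{15}$ can survive. The minor variations (reading $\pi_{18}(S^{14})=0$ directly, and flagging the compatibility of the stable splitting with the skeleta) do not change the argument.
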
 

\begin{proof} 
Substituting the homotopy equivalence in Lemma~\ref{C11decomp} into the 
double suspension of~(\ref{Ccofib3}) gives a homotopy cofibration 
\(\nameddright{S^{17}}{\Sigma^{2} g_{3}}{S^{6}\vee S^{11}\vee S^{13}}{}{\Sigma^{2} C}\). 
By the Hilton-Milnor Theorem, $\Sigma^{2} g_{3}\simeq a+b+c+d$ where 
$a$, $b$ and $c$ are obtained by composing $\Sigma^{2} g_{3}$ with the 
pinch maps to $S^{6}$, $S^{11}$ and~$S^{13}$ respectively, and $d$ is a composite 
\(\nameddright{S^{17}}{}{S^{16}}{w}{S^{6}\vee S^{11}\vee S^{13}}\). 
Here, $w$ is the Whitehead product of the identity maps on $S^{6}$ and $S^{11}$. 
As $\Sigma w$ is null homotopic, we instead consider  
\[\nameddright{S^{18}}{\Sigma^{3} g_{3}}{S^{7}\vee S^{12}\vee S^{14}} 
     {}{\Sigma^{3} C}\]   
where $\Sigma^{3} g_{3}\simeq\Sigma a+\Sigma b+\Sigma c$. 

By Lemma~\ref{stableC}, $C$ is stably homotopy equivalent to a wedge 
of spheres, so $\Sigma^{3} g_{3}\simeq\Sigma a+\Sigma b+\Sigma c$ is 
stably trivial. Thus each of $\Sigma a$, $\Sigma b$ 
and $\Sigma c$ is stably trivial. Observe that both $\Sigma b$ and $\Sigma c$ 
are in the stable range, impling that they are null homotopic. On the other hand, 
$\Sigma a$ represents a class in $\pi_{18}(S^{7})$. By~\cite[Theorem 7.4]{To},  
$\pi_{18}(S^{7})\cong\mathbb{Z}/8\mathbb{Z}\oplus\mathbb{Z}/2\mathbb{Z}$   
where the order $8$ generator is the stable classz~$\zeta_{7}$ and the 
order~$2$ generator is the unstable class $\bar{\nu}_{7}\nu_{15}$. Note 
too that the stable order of $\zeta_{7}$ is~$8$, so the only nontrivial unstable 
class in $\pi_{18}(S^{7})$ is $\bar{\nu}_{7}\nu_{15}$. As $\Sigma a$ is stably 
trivial, we obtain $\Sigma a=u\cdot\bar{\nu}_{7}\nu_{15}$ for some 
$u\in\mathbb{Z}/2\mathbb{Z}$. Hence $\Sigma^{2} g_{3}$ factors as the composite 
\(\llnamedright{S^{18}}{u\cdot\bar{\nu}_{7}\nu_{15}}{S^{7}}\hookrightarrow 
     S^{7}\vee S^{12}\vee S^{14}\), 
from which the asserted homotopy decomposition of $\Sigma^{3} C$ follows. 
\end{proof} 

Next, we identify $\Sigma^{3} j$. Let 
\[\iota\colon\namedright{S^{7}}{}{E}\] 
be the inclusion of the bottom cell. 

\begin{lemma} 
   \label{jclass} 
   There is a homotopy commutative diagram 
   \[\diagram 
          S^{8}\vee S^{10}\vee S^{15}\rto^{a+b+c} \dto^{\simeq}  
               & E\vee S^{12}\vee S^{14}\dto^-{\simeq} \\ 
          \Sigma^{3}(S^{5}\times S^{7})\rto^{\Sigma^{3} j}  
               & \Sigma^{3} C   
     \enddiagram\]   
   where $a$, $b$ and $c$ respectively are the composites 
   \begin{align*} 
          & a\colon\nameddright{S^{8}}{\eta_{7}}{S^{7}}{\iota}{E}\hookrightarrow 
                E\vee S^{12}\vee S^{14} \\  
          & b\colon\nameddright{S^{10}}{\nu'_{7}}{S^{7}}{\iota}{E}\hookrightarrow 
                E\vee S^{12}\vee S^{14} \\ 
          & c\colon\llllnameddright{S^{15}}{\psi+s\cdot\nu'_{12}+\eta_{14}}{S^{7}\vee S^{12}\vee S^{14}} 
                {\iota\vee 1\vee 1}{E\vee S^{12}\vee S^{14}} 
   \end{align*} 
   where $s$ is as in Proposition~\ref{Cjstable} and $\psi=t\cdot\sigma'\eta_{14}$ 
   for some $t\in\mathbb{Z}/2\mathbb{Z}$. 
\end{lemma}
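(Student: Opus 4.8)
The plan is to compute $\Sigma^{3} j$ one cell at a time, exploiting the stable information already available from Proposition~\ref{Cstable} together with the homotopy decomposition of $\Sigma^{3} C$ from Lemma~\ref{Cdecomp}. Recall that $\Sigma^{3}(S^{5}\times S^{7})\simeq S^{8}\vee S^{10}\vee S^{15}$ and $\Sigma^{3} C\simeq E\vee S^{12}\vee S^{14}$. So $\Sigma^{3} j$ is determined by its three restrictions to $S^{8}$, $S^{10}$ and $S^{15}$, and each restriction is in turn determined by its components after projecting $\Sigma^{3} C$ onto $E$, $S^{12}$ and $S^{14}$. First I would record the stable behaviour: by Proposition~\ref{Cstable}, stably $\overline{j}$ is the wedge of $\eta_{4}+\nu'_{4}$ on $S^{5}\vee S^{7}$ and $\nu'_{9}+\eta_{11}$ on $S^{12}$, and after suspending three times and using the stable splitting of $E$ (whose bottom cell is $S^{7}$, with cells in dimensions matching $\{7,12,14\}$ — the $\{4,9,11,16\}$ cells of $C$ shifted by $3$), this pins down all components of $\Sigma^{3} j$ modulo the unstable classes that could vanish stably.

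Next I would go through the components in order. For the restriction to $S^{8}$: its image lies in the $8$-skeleton of $\Sigma^{3} C$, which is $S^{7}\cup_{s\bar{\nu}_{7}\nu_{15}} (\text{nothing in degree }8)$ — i.e.\ just $S^{7}$ through dimension $8$ — so the component is a class in $\pi_{8}(S^{7})\cong\mathbb{Z}/2\mathbb{Z}$ generated by $\eta_{7}$. By the stable calculation ($\eta$ on the $S^{5}$-summand) this must be $\eta_{7}$, giving the map $a$. Similarly the restriction to $S^{10}$ lands in the $10$-skeleton of $E$, giving a class in $\pi_{10}(S^{7})\cong\mathbb{Z}/8\mathbb{Z}$ (generated by $\nu_{7}$, with $\nu'_{7}=2\nu_{7}$); the stable identification $\nu'_{4}$ on the $S^{7}$-summand forces this to be $\nu'_{7}$, giving $b$. (I would double-check the $\nu$-vs-$\nu'$ bookkeeping against the cofibration~(\ref{CP3cofib}) and the note that $\nu'_{n}=2\nu_{n}$ for $n\geq5$.) For the restriction to $S^{15}$, I would first project onto the $S^{12}$ and $S^{14}$ wedge summands of $\Sigma^{3} C$: these give classes in $\pi_{15}(S^{12})\cong\mathbb{Z}/8\mathbb{Z}$ and $\pi_{15}(S^{14})\cong\mathbb{Z}/2\mathbb{Z}$, which by Proposition~\ref{Cstable} (the $\nu'_{9}+\eta_{11}$ on the $S^{12}$-summand, suspended three times) must be $\nu'_{12}$ and $\eta_{14}$ respectively. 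The remaining component is the projection onto $E$, landing in $\pi_{15}(E)$; composing further with the pinch $E\to S^{7}$ (collapsing the attaching cell of $E$) gives a class in $\pi_{15}(S^{7})$, and I would argue using the stable triviality of that residual component together with Toda's tables that it is of the form $t\cdot\sigma'\eta_{14}$ for some $t\in\mathbb{Z}/2\mathbb{Z}$ — this is the class $\psi$. Assembling the three components for the $S^{15}$-restriction yields $c$.

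The main obstacle will be the $S^{15}$-restriction, specifically pinning down the $E$-component $\psi$. The issue is that $\pi_{15}(S^{7})$ is not cyclic and contains classes beyond the stable range, so "stably trivial" does not immediately mean "trivial"; I would need to sort out, from Toda's computation of $\pi_{15}(S^{7})$ and the stable orders of its generators, exactly which unstable classes survive, and argue that the only possibility compatible with the stable data and with landing in the $7$-cell of $E$ (rather than a higher cell) is $t\cdot\sigma'\eta_{14}$. A secondary subtlety is checking that the attaching map $s\cdot\bar{\nu}_{7}\nu_{15}$ of $E$ does not interfere — i.e.\ that the $S^{15}$-component genuinely factors through the bottom-cell inclusion $\iota\colon S^{7}\to E$ rather than only through a higher skeleton; this follows because the first cell of $E$ above $S^{7}$ is in dimension $12+3$... actually in dimension $9+3=12$ (from the $S^{9}$ cell of $C$) — wait, $E$ has only the $7$- and $18$-attaching structure per Lemma~\ref{Cdecomp}, so through dimension $17$ it is just $S^{7}$, and the factorization through $\iota$ is automatic. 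The remaining verifications (the $a$ and $b$ components, and the $\nu'_{12}$, $\eta_{14}$ components of $c$) are routine given the stable input, so I expect the write-up to be short once $\psi$ is handled.
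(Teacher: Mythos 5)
Your proposal is correct and takes essentially the same route as the paper's proof: use Proposition~\ref{Cstable} to pin down every component of $\Sigma^{3}j$, observe that all of them lie in the stable range except the component $S^{15}\to S^{7}$, and then use Toda's computation $\pi_{15}(S^{7})\cong(\mathbb{Z}/2\mathbb{Z})^{3}$ generated by $\sigma'\eta_{14}$, $\bar{\nu}_{7}$, $\epsilon_{7}$, together with the fact that $\bar{\nu}_{7}$ and $\epsilon_{7}$ are stably nontrivial while the residual component must stabilize to zero, to conclude $\psi=t\cdot\sigma'\eta_{14}$. The only cosmetic slip is the reference to a ``pinch'' $E\to S^{7}$ (no such map exists), but this is harmless since, as you note yourself, $E$ agrees with $S^{7}$ through dimension $18$, so $\iota_{*}\colon\pi_{15}(S^{7})\to\pi_{15}(E)$ is an isomorphism and the factorization through $\iota$ is automatic.
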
 

\begin{proof} 
By Proposition~\ref{Cjstable}, the diagram in the statement of the lemma 
stably homotopy commutes if~$c$ is replaced by the composite 
\(c'\colon\llllnameddright{S^{15}}{\ast+s\cdot\nu'_{12}+\eta_{14}}{S^{7}\vee S^{12}\vee S^{14}} 
                {\iota\vee 1\vee 1}{E\vee S^{12}\vee S^{14}}\). 
Since $a$ and $b$ are in the stable range, the diagram in the statement of the 
lemma therefore does homotopy commute when restricted to $S^{8}\vee S^{10}$. 
However, $c'$ is not in the stable range. It fails to be so only by a map 
\(\psi''\colon\namedright{S^{15}}{}{S^{7}}\). 
Thus if $c''$ is the composite 
\(c''\colon\llllnameddright{S^{15}}{\psi''+s\cdot\nu'_{12}+\eta_{14}}{S^{7}\vee S^{12}\vee S^{14}} 
                {\iota\vee 1\vee 1}{E\vee S^{12}\vee S^{14}}\) 
then the diagram in the statement of the lemma homotopy commutes 
with $c$ replaced by $c''$. 

More can be said. By~\cite[Theorem 7.1]{To} (stated later also in~(\ref{SU4groups})),  
$\pi_{15}(S^{7})\cong\mathbb{Z}/2\mathbb{Z}\oplus\mathbb{Z}/2\mathbb{Z}\oplus 
    \mathbb{Z}/2\mathbb{Z}$ 
with generators $\sigma'\nu_{14}$, $\bar{\nu}_{7}$ and $\epsilon_{7}$. 
Thus $\psi''=t\cdot\sigma'\nu_{14}+u\cdot\bar{\nu}_{7} + v\cdot\epsilon_{7}$ 
for some $t,u,v\in\mathbb{Z}/2\mathbb{Z}$. The generators $\bar{\nu}_{7}$ 
and $\epsilon_{7}$ are stable while $\sigma'\nu_{14}$ is unstable. So as 
$c''$ stabilizes to $c$, we must have $\psi''$ stabilizing to the trivial map. 
Thus $u$ and $v$ must be zero. Hence $\psi''=t\cdot\sigma'\nu_{14}$. 
Now $c''$ is exactly the map $c$ described in the statement of the lemma. 
\end{proof}

\section{Preliminary information on the homotopy groups of $SU(4)$} 
\label{sec:htpygroups} 

This section records some information on the homotopy groups 
of $SU(4)$ which will be needed subsequently. Consider the homotopy fibration 
\[\nameddright{S^{3}}{i}{SU(4)}{q}{S^{5}\times S^{7}}.\] 
This induces a long exact sequence of homotopy groups 
\[\cdots\longrightarrow\namedddright{\pi_{n+1}(S^{5}\times S^{7})}{} 
     {\pi_{n}(S^{3})}{i_{\ast}}{\pi_{n}(SU(4))}{q_{\ast}}{\pi_{n}(S^{5}\times S^{7})} 
     \longrightarrow\cdots\] 
Following~\cite{MT}, the notation $[\alpha\oplus\beta]\in\pi_{n}(SU(4))$ means that 
$[\alpha\oplus\beta]$ is an element of $\pi_{n}(SU(4))$ with the property 
that $q_{\ast}([\alpha\oplus\beta])=\alpha\oplus\beta$ for 
$\alpha\in\pi_{n}(S^{5})$ and $\beta\in\pi_{n}(S^{7})$. The homotopy groups 
of $SU(4)$ in low dimensions were determined by Mimura and Toda~\cite{MT}. 

The information presented will be split into two parts, the first corresponding 
to subsequent calculations involving $\pi_{m}(SU(4))$ for $m\in\{5,7,8,10\}$  
and the second corresponding to calculations involving $\pi_{15}(SU(4))$. 

First, for $r\geq 1$, let 
\(\underline{2}^{r}\colon\namedright{S^{7}}{}{S^{7}}\) 
be the map of degree $2^{r}$. In general, the degree two map on $S^{2n+1}$ 
need not induce multiplication by $2$ in homotopy groups. However, 
as $S^{7}$ is an $H$-space, the degree $2$ map on $S^{7}$ is homotopic 
to the $2^{nd}$-power map, implying that it does in fact induce multiplication 
by $2$ in homotopy groups. We record this for later use. 

\begin{lemma} 
   \label{S72} 
   The map 
   \(\namedright{S^{7}}{\underline{2}}{S^{7}}\) 
    induces multiplication by $2$ in homotopy groups.~$\qqed$ 
\end{lemma}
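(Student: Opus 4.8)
The plan is to produce an $H$-space structure on $S^{7}$ and to compare the degree-$2$ map $\underline{2}$ with the associated $2^{nd}$-power map. First I would recall that $S^{7}$, viewed as the unit octonions, carries an $H$-space multiplication $\mu\colon S^{7}\times S^{7}\to S^{7}$ for which the restriction to each factor is homotopic to the identity. For any such $H$-space $Y$, the $k^{th}$-power map $P_{k}\colon Y\to Y$, defined as the $k$-fold product along the diagonal, induces multiplication by $k$ on $\pi_{n}(Y)$ for every $n$: if $\alpha\colon S^{n}\to Y$, then by definition of the group structure on $\pi_{n}(Y)$ coming from $\mu$, the composite $P_{k}\circ\alpha$ represents the $k$-fold sum $\alpha+\dots+\alpha$. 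This is the standard argument and requires no calculation.

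Second I would identify $P_{2}$ with $\underline{2}$ up to homotopy. Since $S^{7}$ is simply connected, $[S^{7},S^{7}]\cong\pi_{7}(S^{7})\cong\mathbb{Z}$, detected by the integral degree, so it suffices to check that $P_{2}$ has degree $2$. Writing $P_{2}=\mu\circ\Delta$ and passing to integral homology, the diagonal sends the fundamental class $\iota_{7}$ to $\iota_{7}\otimes 1+1\otimes\iota_{7}$ in $H_{7}(S^{7}\times S^{7};\mathbb{Z})$, and the $H$-space condition forces $\mu_{\ast}(\iota_{7}\otimes 1)=\mu_{\ast}(1\otimes\iota_{7})=\iota_{7}$. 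Hence $(P_{2})_{\ast}(\iota_{7})=2\iota_{7}$, so $P_{2}$ has degree $2$ and therefore $\underline{2}\simeq P_{2}$.

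Combining the two steps, $\underline{2}$ acts on homotopy groups exactly as $P_{2}$ does, namely by multiplication by $2$, which is the assertion. There is essentially no obstacle here; the only subtlety worth flagging is that for a general sphere $S^{m}$ the degree-$2$ self-map does \emph{not} induce multiplication by $2$ on homotopy groups, so the argument genuinely relies on the $H$-structure available specifically for $S^{1}$, $S^{3}$ and $S^{7}$ rather than being a formal fact.
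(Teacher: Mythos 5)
Your argument is correct and is the same one the paper uses (the paper simply asserts that, $S^{7}$ being an $H$-space, the degree $2$ map is homotopic to the $2^{nd}$-power map, which induces multiplication by $2$ on homotopy groups); you have merely filled in the standard details, namely the octonionic $H$-structure, the Eckmann--Hilton-type agreement of the two additions on $\pi_{n}$, and the homological check that $\mu\circ\Delta$ has degree $2$. Nothing further is needed.
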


\subsection{Dimensions $5$, $7$, $8$ and $10$} 
The relevant table of homotopy groups from~\cite[Theorem 6.1]{MT} is: 
\begin{equation} 
  \label{SU4groups1} 
  \begin{tabular}{|c|c|c|c|c|}\hline 
          & $\pi_{5}(SU(4))$ & $\pi_{7}(SU(4))$ & $\pi_{8}(SU(4))$ & $\pi_{10}(SU(4))$ \\ \hline 
          $2$-component & $\mathbb{Z}$ & $\mathbb{Z}$ & $\mathbb{Z}/8\mathbb{Z}$ 
             & $\mathbb{Z}/8\mathbb{Z}\oplus\mathbb{Z}/2\mathbb{Z}$ \\ \hline 
          generators & $[\underline{2}\oplus\ast]$ & $[\eta_{5}^{2}\oplus\underline{2}]$ 
             & $[\nu_{5}\oplus\eta_{7}]$  
             & $[\nu_{7}]$, $[\nu_{5}\eta_{8}^{2}]$ \\ \hline 
  \end{tabular} 
\end{equation} 
In addition, Mimura and Toda~\cite[Lemma 6.2(i)]{MT} proved that 
\(\namedright{\pi_{n+1}(S^{5}\times S^{7})}{}{\pi_{n}(S^{3})}\) 
is an epimorphism for $n\in\{8,10\}$, implying the following.  

\begin{lemma} 
   \label{pi810inj} 
   The map 
   \(\namedright{\pi_{n}(SU(4))}{q_{\ast}}{\pi_{n}(S^{5}\times S^{7})}\) 
   is an injection for $n\in\{8,10\}$.~$\qqed$ 
\end{lemma}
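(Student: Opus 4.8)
The plan is to read the statement straight off the exact homotopy sequence of the fibration
\[\nameddright{S^{3}}{i}{SU(4)}{q}{S^{5}\times S^{7}},\]
the relevant portion of which is
\[\namedddright{\pi_{n+1}(S^{5}\times S^{7})}{\partial}{\pi_{n}(S^{3})}{i_{\ast}}{\pi_{n}(SU(4))}{q_{\ast}}{\pi_{n}(S^{5}\times S^{7})}.\]
First I would quote the result of Mimura and Toda~\cite[Lemma 6.2(i)]{MT} cited just above, namely that the connecting homomorphism $\partial$ is an epimorphism for $n\in\{8,10\}$. Exactness at $\pi_{n}(S^{3})$ then gives $\ker(i_{\ast})=\mathrm{im}(\partial)=\pi_{n}(S^{3})$, so $i_{\ast}$ is the zero map for these $n$. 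Exactness at $\pi_{n}(SU(4))$ now gives $\ker(q_{\ast})=\mathrm{im}(i_{\ast})=0$, which is exactly the asserted injectivity of $q_{\ast}$.

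All of the homotopy-theoretic weight is therefore carried by the Mimura--Toda computation of $\partial$, which I would take as an input, so there is no genuine obstacle here: what remains is a one-line diagram chase. The only point that warrants a moment of care is bookkeeping, namely confirming that the map Mimura and Toda prove surjective is indeed the connecting map of the particular fibration $\nameddright{S^{3}}{i}{SU(4)}{q}{S^{5}\times S^{7}}$ written above, rather than a boundary map in one of the auxiliary sphere bundles $\nameddright{SU(3)}{}{SU(4)}{}{S^{7}}$ or $\nameddright{SU(2)}{}{SU(3)}{}{S^{5}}$ that their inductive computation of $\pi_{\ast}(SU(4))$ also uses. Matching this with their description of the generators $[\alpha\oplus\beta]$ of $\pi_{n}(SU(4))$ settles the identification, after which the exactness argument applies without change.
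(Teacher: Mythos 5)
Your argument is exactly the paper's: Lemma \ref{pi810inj} is stated as an immediate consequence of Mimura--Toda's result that \(\namedright{\pi_{n+1}(S^{5}\times S^{7})}{}{\pi_{n}(S^{3})}\) is an epimorphism for $n\in\{8,10\}$, and the intended justification is precisely the exactness chase you give (the connecting map is onto, so $i_{\ast}=0$, so $q_{\ast}$ is injective). Your extra care about which fibration's boundary map is being quoted is sensible but does not change the argument.
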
 

We record the following relations in the homotopy groups of spheres. 

\begin{lemma} 
   \label{Todarelns1} 
   The following hold: 
   \begin{letterlist} 
      \item $2\nu'_{3}\simeq\eta_{3}^{3}$; 
      \item $4\nu_{5}\simeq\eta_{5}^{3}$; 
      \item $\eta_{5}^{2}\nu'_{7}\simeq\ast$.  
   \end{letterlist} 
\end{lemma}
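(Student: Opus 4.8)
The three relations are classical $2$-primary facts about the low-dimensional homotopy of spheres, due to Toda~\cite{To}, and the plan is to recover them by pairing the relevant entries of Toda's tables with one short stable argument, bearing in mind that we have $2$-localized throughout, so that, for instance, $\pi_{6}(S^{3})\cong\mathbb{Z}/4\mathbb{Z}$ and $\pi_{8}(S^{5})\cong\mathbb{Z}/8\mathbb{Z}$ in place of the integral groups $\mathbb{Z}/12\mathbb{Z}$ and $\mathbb{Z}/24\mathbb{Z}$. In every case the ambient group is a small cyclic $2$-group, so the only content is to identify the named class with a specific multiple of a generator.

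For (a), observe first that $\eta_{3}^{3}=\eta_{3}\circ\eta_{4}\circ\eta_{5}$ is $2$-torsion since $\eta_{5}$ has order $2$, so in $\pi_{6}(S^{3})\cong\mathbb{Z}/4\mathbb{Z}\langle\nu'_{3}\rangle$ it equals either $0$ or $2\nu'_{3}$. To rule out $0$ it suffices to show the double suspension $E^{2}(\eta_{3}^{3})=\eta_{5}^{3}$ is nonzero; but $\eta_{5}^{3}\in\pi_{8}(S^{5})$ lies in the stable range, and the classical relation $\eta^{3}=12\nu$ in $\pi_{3}^{s}\cong\mathbb{Z}/24\mathbb{Z}$ (equivalently $\eta^{3}=4\nu$ after $2$-localization) gives $\eta_{5}^{3}=4\nu_{5}\neq 0$. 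Hence $\eta_{3}^{3}=2\nu'_{3}$. Part (b) is then a formal consequence of (a): applying $E^{2}$ and using $E^{2}\nu'_{3}=\nu'_{5}=2\nu_{5}$ (recorded in Section~\ref{sec:Cprops}) gives $\eta_{5}^{3}=E^{2}(2\nu'_{3})=2\nu'_{5}=4\nu_{5}$.

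Part (c) is the genuinely unstable relation of the three: it represents a class in $\pi_{7}(S^{3})\cong\mathbb{Z}/2\mathbb{Z}$, and since $\pi_{4}^{s}=0$ this class dies under iterated suspension whatever it is, so the stable-stem trick used for (a) is unavailable. Here the plan is to quote the relevant entry of Toda's table for $\pi_{7}(S^{3})$ directly; if one prefers to re-derive the relevant groups, the EHP sequence for $S^{3}$ is controllable because $S^{3}$ is a Lie group, so every connecting homomorphism $P$ vanishes in this range (equivalently $\Omega S^{4}\simeq S^{3}\times\Omega S^{7}$), reducing the identification to a calculation on the summand detected by the suspension $E$. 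I expect (c), together with the bookkeeping needed to match Toda's composition conventions with the suspended classes $\eta_{n}$, $\nu_{n}$, $\nu'_{n}$ in play here and to carry the $2$-localization through consistently, to be the only real friction; parts (a) and (b) should take essentially a single short paragraph.
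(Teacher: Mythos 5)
Your proposal is correct in substance, but be aware that the paper supplies no argument for this lemma at all: it is stated as a list of Toda's relations and justified solely by the citation to \cite{To} (hence the immediate box). So everything you do beyond quoting \cite{To} is extra relative to the paper, and where you do quote Toda you are doing exactly what the paper does. The genuinely different content is your partial re-derivation of (a) and (b) from stable information, which is valid and arguably more illuminating, at the cost of needing the stable $3$-stem as input.

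Two caveats. First, $\pi_{8}(S^{5})$ is already in the stable range, so the ``classical relation $\eta^{3}=4\nu$'' that you feed into the proof of (a) \emph{is} statement (b); your subsequent deduction of (b) from (a) is therefore circular and should be dropped in favour of presenting (b) as the quoted classical input (the argument for (a), namely that $\eta_{3}^{3}$ is $2$-torsion in $\pi_{6}(S^{3})\cong\mathbb{Z}/4\mathbb{Z}\langle\nu'_{3}\rangle$ and is nonzero because its double suspension $\eta_{5}^{3}=4\nu_{5}\neq 0$, is fine). Second, for (c) --- which with the paper's indexing conventions must be read as $\eta_{3}\circ\nu'_{4}=\eta_{3}\circ\Sigma\nu'_{3}$, since the other plausible reading $\nu'_{3}\eta_{6}$ equals $\eta_{3}\nu_{4}$ by Lemma~\ref{Todarelns}(b) and is the \emph{nonzero} generator of $\pi_{7}(S^{3})$ --- merely quoting Toda's table for $\pi_{7}(S^{3})\cong\mathbb{Z}/2\mathbb{Z}\{\nu'_{3}\eta_{6}\}$ identifies the ambient group but not the composite, so as stated your primary plan for (c) does not close the argument; you must either quote the relation itself from \cite{To} (the paper's route) or finish the EHP sketch you outline. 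The finish is one line: the splitting $\Omega S^{4}\simeq S^{3}\times\Omega S^{7}$ makes the suspension $E\colon\pi_{7}(S^{3})\to\pi_{8}(S^{4})$ a split injection, and $E(\eta_{3}\nu'_{4})=\eta_{4}\nu'_{5}=\eta_{4}\circ 2\nu_{5}=2(\eta_{4}\nu_{5})=0$, since $\eta_{4}\nu_{5}$ suspends $\eta_{3}\nu_{4}$, an element of order $2$. With those two repairs your write-up is a correct, more self-contained alternative to the paper's bare citation.
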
 

\begin{proof} 
Part~(a) is by~\cite[Equation 5.3]{To}, part~(b) is by~\cite[Lemma 5.4]{To} together 
with part~(a), and part~(c) holds since $\nu'_{7}\simeq 2\nu_{7}$ by part~(b) while 
$\eta_{5}^{2}$ has order~$2$. 
\end{proof} 

For convenience, let 
\[d\colon\namedright{S^{7}}{}{SU(4)}\] 
represent the generator $[\eta_{5}^{2}\oplus\underline{2}]$ of $\pi_{7}(SU(4))$.  

\begin{lemma} 
   \label{S810dgrms} 
   There are homotopy commutative diagrams 
   \[\diagram 
          S^{8}\rto^-{[\nu_{5}\oplus\eta_{7}]}\dto^{\eta_{7}} & SU(4)\dto^{4} 
              & & S^{10}\rto^-{[\nu_{7}]}\dto^{\nu'_{7}} & SU^{4}\dto^{4} \\ 
          S^{7}\rto^-{d} & SU(4) & & S^{7}\rto^-{d} & SU(4). 
     \enddiagram\] 
\end{lemma}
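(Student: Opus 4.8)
The goal is to understand the effect of the degree-$4$ self-map on $SU(4)$ after precomposing with the generators $[\nu_{5}\oplus\eta_{7}]$ and $[\nu_{7}]$, and to show these factor through the degree-one-lower generator $d$ via $\eta_{7}$ and $\nu'_{7}$ respectively. The plan is to work entirely in the homotopy groups $\pi_{8}(SU(4))$ and $\pi_{10}(SU(4))$, which are known from~(\ref{SU4groups1}), and to exploit the injectivity of $q_{*}$ from Lemma~\ref{pi810inj}: since $q_{*}$ is injective in these degrees, it suffices to check each diagram after composing with $q\colon SU(4)\to S^{5}\times S^{7}$, i.e.\ to verify the corresponding identities in $\pi_{8}(S^{5}\times S^{7})$ and $\pi_{10}(S^{5}\times S^{7})$, which are direct sums of homotopy groups of spheres that Toda has computed.

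For the left-hand diagram: the generator $[\nu_{5}\oplus\eta_{7}]$ has $q_{*}$-image $\nu_{5}\oplus\eta_{7}$, so $4\cdot[\nu_{5}\oplus\eta_{7}]$ maps to $4\nu_{5}\oplus 4\eta_{7} = 4\nu_{5}\oplus 0$ (since $\eta_{7}$ has order $2$). By Lemma~\ref{Todarelns1}(b), $4\nu_{5}\simeq\eta_{5}^{3}$, so the $q_{*}$-image is $\eta_{5}^{3}\oplus 0 = \eta_{5}^{2}\cdot\eta_{7}\oplus 0$. On the other side, $d\circ\eta_{7}$ has $q_{*}$-image $q_{*}(d)\circ\eta_{7} = (\eta_{5}^{2}\oplus\underline{2})\circ\eta_{7}$; here the $\underline{2}$ on the $S^{7}$-factor induces multiplication by $2$ on $\pi_{8}(S^{7})$ by Lemma~\ref{S72}, hence kills $\eta_{7}$, leaving $\eta_{5}^{2}\eta_{7}\oplus 0 = \eta_{5}^{3}\oplus 0$. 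The two images agree, so by Lemma~\ref{pi810inj} the left diagram commutes. For the right-hand diagram the bookkeeping is the same: $4\cdot[\nu_{7}]$ has $q_{*}$-image $4\nu_{7}$ in the $S^{7}$-summand of $\pi_{10}(S^{5}\times S^{7})$, and since $\nu'_{7}=2\nu_{7}$ we have $4\nu_{7}=2\nu'_{7}$; meanwhile $d\circ\nu'_{7}$ has image $(\eta_{5}^{2}\oplus\underline{2})\circ\nu'_{7}$, where $\eta_{5}^{2}\nu'_{7}=\eta_{5}^{2}\cdot 2\nu_{7} = 2\eta_{5}^{2}\nu_{7}$ lands in a group where this is either zero or one checks it against the known structure of $\pi_{10}(S^{5})$, and $\underline{2}\circ\nu'_{7}=2\nu'_{7}$ by Lemma~\ref{S72}. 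One then matches the $S^{5}$- and $S^{7}$-components against the generators $[\nu_{7}],[\nu_{5}\eta_{8}^{2}]$ of $\pi_{10}(SU(4))$ listed in~(\ref{SU4groups1}) and concludes by injectivity of $q_{*}$.

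The main obstacle I anticipate is the $S^{5}$-component in the degree-$10$ case: one must be careful about whether $\eta_{5}^{2}\nu_{7}$ (equivalently a class built from $\eta_{5}$ and the $\nu$-family in $\pi_{10}(S^{5})$) contributes, and whether the relation $\nu'_{7}\eta_{10}$ or similar composites interfere — this is exactly the kind of place where an unstable element could sneak in. I would resolve it by pinning down $\pi_{10}(S^{5})$ and $\pi_{8}(S^{7})$ from~\cite{To}, using Lemma~\ref{Todarelns1}(c) ($\eta_{3}\nu'_{3}\simeq\ast$, hence suspensions $\eta\nu'$ vanish) to kill cross terms, and checking the $2$-primary orders so that the only surviving class on each side is forced to match. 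Once the arithmetic of composites in these two small sphere homotopy groups is settled, both squares commute by a single appeal to Lemma~\ref{pi810inj}.
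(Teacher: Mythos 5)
Your proposal is correct and follows essentially the same route as the paper: reduce both squares to $\pi_{8}(S^{5}\times S^{7})$ and $\pi_{10}(S^{5}\times S^{7})$ via the injectivity of $q_{*}$ (Lemma~\ref{pi810inj}), use $q\circ d=\eta_{5}^{2}\times\underline{2}$, Lemma~\ref{S72}, $4\nu_{5}\simeq\eta_{5}^{3}$, and the vanishing of the cross term $\eta_{5}^{2}\nu'_{7}$ (via Lemma~\ref{Todarelns1}(c), or simply because $2$-torsion kills it in $\pi_{10}(S^{5})\cong\mathbb{Z}/2\mathbb{Z}$). The only step the paper makes explicit that you leave implicit is the identification $4\circ[\nu_{5}\oplus\eta_{7}]\simeq[\nu_{5}\oplus\eta_{7}]\circ\underline{4}$, justified by the coincidence of the $H$- and co-$H$-induced additions on $[S^{8},SU(4)]$; worth stating, but it is the standard fact you are tacitly using.
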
 

\begin{proof} 
By Lemma~\ref{pi810inj}, 
\(\namedright{\pi_{n}(SU(4))}{q_{\ast}}{\pi_{n}(S^{5}\times S^{7})}\) 
is an injection for $n\in\{8,10\}$. So in both cases it suffices to show that the 
asserted homotopies hold after composition with 
\(\namedright{SU(4)}{q}{S^{5}\times S^{7}}\). 
Since the composite 
\(\nameddright{S^{7}}{d}{SU(4)}{q}{S^{5}\times S^{7}}\) 
is $\eta_{5}^{2}\times\underline{2}$, the two assertions will follow if we prove: 
 
(i) $(\eta_{5}^{2}\times\underline{2})\circ\eta_{7}\simeq q\circ 4\circ[\nu_{5}\oplus\eta_{7}]$; 

(ii) $(\eta_{5}^{2}\times\underline{2})\circ\nu'_{7}\simeq q\circ 4\circ[\nu_{7}]$. 

By Lemma~\ref{S72}, $\underline{2}\circ\eta_{7}\simeq 2\eta_{7}$ and 
$\underline{2}\circ\nu'_{7}\simeq 2\nu'_{7}$. Since $\eta_{7}$ has order~$2$ 
we obtain $\underline{2}\circ\eta_{7}\simeq\ast$. By Lemma~\ref{Todarelns1}~(a) and~(c),  
$2\nu'_{7}\simeq\eta_{7}^{3}$ and $\eta_{5}^{2}\nu'_{7}\simeq\ast$. Thus (i) and (ii) reduce to proving: 

(i$^{\prime}$) $\eta_{5}^{3}\simeq q\circ 4\circ[\nu_{5}\oplus\eta_{7}]$; 

(ii$^{\prime}$) $\eta_{7}^{3}\simeq q\circ 4\circ[\nu_{7}]$. 

Consider the diagram 
\[\diagram 
     S^{8}\rto^-{[\nu_{5}\oplus\eta_{7}]}\dto^{\underline{4}} & SU(4)\dto^{4} \\ 
     S^{8}\rto^-{[\nu_{5}\oplus\eta_{7}]}\drto_{\nu_{5}\times\eta_{7}} 
         & SU(4)\dto^{q} \\ 
     & S^{5}\times S^{7}. 
  \enddiagram\] 
The top square homotopy commutes since the multiplications in 
$[S^{8},SU(4)]$ induced by the $H$-structure on $SU(4)$ and the 
co-$H$-structure on $S^{8}$ coincide. The bottom square homotopy 
commutes by definition of $[\nu_{5}\circ\eta_{7}]$. Since $\eta_{7}$ 
has order $2$ and, by Lemma~\ref{Todarelns1}~(c), $4\nu_{5}\simeq\eta_{5}^{3}$, 
we obtain $(\nu_{5}\times\eta_{7})\circ\underline{4}\simeq\eta_{5}^{3}$. 
Therefore $q\circ 4\circ [\nu_{5}\circ\eta_{7}]\simeq\eta_{5}^{3}$, and 
so (i$^{\prime}$) holds. 

Next, consider the diagram 
\[\diagram 
     S^{10}\rto^-{[\nu_{7}]}\dto^{\underline{4}} & SU(4)\dto^{4} \\ 
     S^{10}\rto^-{[\nu_{7}]}\drto_{\ast\times\nu_{7}} 
         & SU(4)\dto^{q} \\ 
     & S^{5}\times S^{7}. 
  \enddiagram\] 
The two squares homotopy commute as in the previous case. 
By Lemma~\ref{Todarelns1}~(c), $4\nu_{7}\simeq\eta_{7}^{3}$. Therefore   
$q\circ 4\circ\beta\simeq\eta_{7}^{3}$, and so (ii$^{\prime}$) holds. 
\end{proof}

\subsection{Dimension $15$}  

The relevant homotopy group from~\cite[Theorem 6.1]{MT} is: 
\begin{equation} 
  \label{SU4groups} 
  \begin{tabular}{|c|c|}\hline 
          & $\pi_{15}(SU(4))$ \\ \hline 
          $2$-component & $\mathbb{Z}/8\mathbb{Z}\oplus\mathbb{Z}/2\mathbb{Z}$ \\ \hline 
          generators & $[\nu_{5}\oplus\eta_{7}]\circ\sigma_{8}$, $[\sigma'\eta_{14}]$ \\ \hline 
  \end{tabular} 
\end{equation} 
In addition, Mimura and Toda~\cite[Lemma 6.2(i)]{MT} proved that 
\(\namedright{\pi_{16}(S^{5}\times S^{7})}{}{\pi_{15}(S^{3})}\) 
is an epimorphism, implying the following.  

\begin{lemma} 
   \label{pi15inj} 
   The map 
   \(\namedright{\pi_{15}(SU(4))}{q_{\ast}}{\pi_{15}(S^{5}\times S^{7})}\) 
   is an injection.~$\qqed$ 
\end{lemma}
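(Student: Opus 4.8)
The plan is to deduce the claim directly from the long exact homotopy sequence of the fibration $\nameddright{S^{3}}{i}{SU(4)}{q}{S^{5}\times S^{7}}$ recorded at the start of this section. The relevant portion around dimension $15$ is
\[\namedddright{\pi_{16}(S^{5}\times S^{7})}{\partial}{\pi_{15}(S^{3})}{i_{\ast}}{\pi_{15}(SU(4))}{q_{\ast}}{\pi_{15}(S^{5}\times S^{7})}.\]
By exactness, $\ker(q_{\ast})=\mathrm{image}(i_{\ast})$, so $q_{\ast}$ is injective precisely when $i_{\ast}$ vanishes on $\pi_{15}(S^{3})$; and, again by exactness, $i_{\ast}$ vanishes precisely when the connecting homomorphism $\partial$ is surjective. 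Thus the whole statement reduces to surjectivity of $\namedright{\pi_{16}(S^{5}\times S^{7})}{\partial}{\pi_{15}(S^{3})}$.

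For that surjectivity I would simply invoke Mimura and Toda~\cite[Lemma 6.2(i)]{MT}, which asserts exactly that this connecting map is an epimorphism --- the same input, one dimension up, that was used to establish Lemma~\ref{pi810inj}. Feeding this into the exactness argument of the previous paragraph gives $i_{\ast}=0$ on $\pi_{15}(S^{3})$, hence the injectivity of $q_{\ast}$.

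A word on where the content sits and what the potential obstacle is. Surjectivity of $\partial$ is not formal here: both $\pi_{15}(S^{3})$ and $\pi_{16}(S^{5}\times S^{7})$ are nonzero, so one genuinely relies on the Mimura--Toda computation that pins down the image of $\partial$. Granting that computation, the only thing to check is the bookkeeping that the epimorphism they record is indeed the connecting map in precisely this long exact sequence, after which the conclusion is immediate. If one wanted an argument independent of~\cite{MT}, the task would become: show that every element of $\pi_{15}(S^{3})$ lies in the image of $\partial$, for instance by producing explicit preimages built from the sphere inclusions $\namedright{S^{5}}{}{S^{5}\times S^{7}}$, $\namedright{S^{7}}{}{S^{5}\times S^{7}}$ together with the boundary behaviour of $q$ already understood in lower degrees and Toda's relations in the homotopy of spheres; I expect that route to be the only genuinely laborious one, which is why citing~\cite{MT} is the preferable move.
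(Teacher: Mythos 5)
Your argument is exactly the paper's: the statement is deduced from the long exact homotopy sequence of the fibration $S^{3}\rightarrow SU(4)\rightarrow S^{5}\times S^{7}$, with the surjectivity of the connecting map $\pi_{16}(S^{5}\times S^{7})\rightarrow\pi_{15}(S^{3})$ supplied by Mimura--Toda \cite[Lemma 6.2(i)]{MT}, just as the paper does. The proposal is correct and matches the paper's proof.
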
 

Next, we record information on $\pi_{15}(S^{7})$ determined by Toda~\cite[Theorem 7.6]{To}:  
\begin{equation} 
  \label{S15groups} 
  \begin{tabular}{|c|c|}\hline 
          & $\pi_{15}(S^{7})$ \\ \hline 
          $2$-component & $\mathbb{Z}/2\mathbb{Z}\oplus\mathbb{Z}/2\mathbb{Z} 
                  \oplus\mathbb{Z}/2\mathbb{Z}$  \\ \hline 
          generators & $\sigma'\eta_{14}$, $\bar{\nu}_{7}$, $\epsilon_{7}$ \\ \hline 
  \end{tabular} 
\end{equation} 
In addition, Toda~\cite{To} proved the following relations (the proofs are scattered 
through Toda's book but a summary list can be found in~\cite[Equations 1.1 and 2.1]{O}). 

\begin{lemma} 
   \label{Todarelns} 
   The following hold: 
   \begin{letterlist} 
      \item $\eta_{5}\bar{\nu}_{6}\simeq\nu_{5}^{3}$; 
      \item $\eta_{3}\nu_{4}\simeq\nu'_{3}\eta_{6}$; 
      \item $\eta_{6}\sigma'\simeq 4\bar{\nu}_{6}$; 
      \item $\eta_{6}\nu_{7}\simeq\nu_{6}\eta_{9}\simeq\ast$. 
   \end{letterlist} 
\end{lemma}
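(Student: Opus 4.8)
The plan is to verify each of (a)--(e) by locating it, explicitly or as an immediate consequence, in Toda's monograph~\cite{To}; the compilation of low-dimensional composition relations in~\cite[(1.1) and (2.1)]{O} provides a convenient cross-reference. It is convenient to organise the work by how far each relation sits outside the stable range.

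Relation (d) is immediate: both $\eta_{6}\nu_{7}$ and $\nu_{6}\eta_{9}$ lie in $\pi_{10}(S^{6})$, which is already in the stable range and agrees with the stable $4$-stem $\pi_{4}^{s}=0$, so both composites vanish. (Equivalently, Toda records $\eta_{n}\nu_{n+1}=0$ and $\nu_{n}\eta_{n+3}=0$ directly.) Relation (b) is essentially a statement inside Toda's computation of $\pi_{7}(S^{3})\cong\mathbb{Z}/2\mathbb{Z}$, obtained there from the EHP sequence together with the identity connecting $\nu'_{3}$ to the Hopf map $\nu_{4}$; one simply reads off that $\eta_{3}\nu_{4}$ and $\nu'_{3}\eta_{6}$ coincide.

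Relations (a) and (c) lie in the low-dimensional parts of the $9$- and $8$-stems, namely in $\pi_{14}(S^{5})$ and $\pi_{14}(S^{6})$, and are instances of Toda's composition and secondary-composition identities for $\bar{\nu}$, $\epsilon$ and $\sigma'$. I would quote $\eta_{5}\bar{\nu}_{6}=\nu_{5}^{3}$ and $\eta_{6}\sigma'=4\bar{\nu}_{6}$ from the relevant chapters of~\cite{To}, using~\cite{O} to confirm the exact form of each, in particular the multiple $4$ in (c) and the absence of an $\eta\epsilon$-type correction term in the unstable group relevant to (a).

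Relation (e), $\sigma^{'''}\nu_{12}=4(\nu_{5}\sigma_{8})$, is the main obstacle, being the only one genuinely far outside the stable range: here $\sigma^{'''}\in\pi_{12}(S^{5})$ and $\nu_{5}\sigma_{8}\in\pi_{15}(S^{5})$, the latter in the $10$-stem of $S^{5}$. The statement must be extracted from Toda's explicit descriptions of $\pi_{12}(S^{5})$ and $\pi_{15}(S^{5})$, and the delicate point is the coefficient: one must rule out that the composite equals $2(\nu_{5}\sigma_{8})$ or generates a larger cyclic summand. The plan is to combine the behaviour of $\sigma^{'''}$ and $\nu_{5}$ under suspension with Toda's description of the suspension homomorphisms into $\pi_{15}(S^{5})$ and the EHP sequence in this range to pin the multiple down to exactly $4$. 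I expect this coefficient bookkeeping to be the most error-prone step, and the place where cross-checking against~\cite{O} is most valuable.
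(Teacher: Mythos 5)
Your proposal is correct and matches the paper's treatment: the lemma is not proved in the paper at all, but is quoted directly from Toda~\cite{To}, with the summary lists in~\cite[(1.1) and (2.1)]{O} cited as the convenient cross-reference, exactly as you plan to do. The extra stable-range and EHP commentary (e.g.\ for (d) and the coefficient in (e)) is sound but unnecessary, since all five relations, including $\sigma^{'''}\nu_{12}=4(\nu_{5}\sigma_{8})$, appear explicitly in Toda's and Oguchi's lists.
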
 
\vspace{-1cm}~$\qqed$\medskip 

Lemma~\ref{Todarelns} is used to obtain two more relations. 

\begin{lemma} 
   \label{sphererelns} 
   The following hold: 
   \begin{letterlist} 
      \item $\eta_{5}^{2}\bar{\nu}_{7}\simeq\ast$; 
      \item $\eta_{5}^{2}\sigma'\simeq\ast$. 
   \end{letterlist} 
\end{lemma}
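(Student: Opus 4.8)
The plan is to derive both relations in $\pi_{15}(S^5)$ from the relations collected in Lemma~\ref{Todarelns} together with the standard relation $\eta_5^3 \simeq 4\nu_5$ (Lemma~\ref{Todarelns1}(b)) and the fact that $\nu'_5 = 2\nu_5$. For part (a), I would start from the identity $\eta_5\bar\nu_6 = \nu_5^3$ (Lemma~\ref{Todarelns}(a)) and precompose with $\eta_4$ (or equivalently read it one suspension down and compose on the right with $\bar\nu_7$), so that $\eta_5^2\bar\nu_7 = \eta_5\cdot(\eta_5\bar\nu_7)$. The point is to rewrite $\eta_5^2\bar\nu_7$ as $\nu_5^3\eta_{14}$ using (a), and then observe that $\nu_5^3\eta_{14}$ lies in a group where it must vanish --- $\nu_5^2 \in \pi_{11}(S^5)$ has order $2$ and $\nu_{11}\eta_{14} = \eta_{11}\nu_{12}$-type manipulations (via Lemma~\ref{Todarelns}(d), $\eta_6\nu_7 = \nu_6\eta_9 = 0$) force the composite to be trivial. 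Concretely, $\nu_5^3\eta_{14} = \nu_5^2\cdot(\nu_{11}\eta_{14})$ and $\nu_{11}\eta_{14}$ is a suspension of $\nu_8\eta_{11}$, which is zero by (d), so the whole thing dies.

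For part (b), the natural route is through Lemma~\ref{Todarelns}(c), $\eta_6\sigma' = 4\bar\nu_6$. Suspending once and composing appropriately gives $\eta_7\sigma'_7 \cdots$; more usefully, I would write $\eta_5^2\sigma'$ by first applying (c) in a suspended form to replace $\eta\sigma'$ by $4\bar\nu$, so that $\eta_5^2\sigma' = \eta_5\cdot(\eta_6\sigma')\cdot(\text{shifts}) = \eta_5\cdot 4\bar\nu_6 = 4(\eta_5\bar\nu_6) = 4\nu_5^3$ using (a) again. Since $\nu_5^3 \in \pi_{14}(S^5)$ and $4\nu_5^3 = 0$ (the relevant $2$-component has small exponent --- $\nu_5^3$ has order dividing $2$, indeed $2\nu_5^3 = 0$ already), we conclude $\eta_5^2\sigma' = 0$. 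I should be careful about the exact dimensions and which suspension of each relation I am invoking, since $\eta$, $\nu$, $\sigma'$, $\bar\nu$ live in specific stems and the relations in Lemma~\ref{Todarelns} are stated at particular levels; the composites here land in $\pi_{15}(S^5)$, so everything is in or near the stable range and the suspension bookkeeping should go through cleanly.

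The main obstacle I anticipate is purely bookkeeping: making sure the chain of substitutions respects the grading (each relation from Toda's book is stated for a specific sphere, and I need its image under the appropriate iterated suspension), and confirming that the final multiples $4\nu_5^3$ and $\nu_5^2\cdot\nu_{11}\eta_{14}$ genuinely vanish in the relevant homotopy groups of spheres rather than merely being annihilated stably. Both of these are settled by Toda's tables --- $\pi_{14}(S^5)$ and $\pi_{15}(S^5)$ have the relevant elements of order $2$ --- but I would state explicitly which group each intermediate composite lies in. No genuinely new input beyond Lemma~\ref{Todarelns}, Lemma~\ref{Todarelns1}, and Toda's tables is needed; the lemma is a formal consequence.
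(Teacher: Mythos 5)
Your part (b) is fine and is essentially the paper's argument: $\eta_{5}^{2}\sigma'=\eta_{5}\circ(\eta_{6}\sigma')=\eta_{5}\circ(4\bar{\nu}_{6})=4(\eta_{5}\bar{\nu}_{6})=4\nu_{5}^{3}=0$, the last step because $\nu_{5}^{3}$ has order $2$ (the paper phrases this via the order of $\eta_{5}$, but the content is the same).

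For part (a) there is a genuine gap at the first move. What Lemma~\ref{Todarelns}(a), suspended once, actually gives is $\eta_{6}\bar{\nu}_{7}=\nu_{6}^{3}$, hence $\eta_{5}^{2}\bar{\nu}_{7}=\eta_{5}\nu_{6}^{3}$ --- not $\nu_{5}^{3}\eta_{14}$. Passing from $\eta_{5}\nu_{6}^{3}$ to $\nu_{5}^{3}\eta_{14}$ slides the $\eta$ from the left of the $\nu$'s to the far right, and that is a stable commutation which is not available verbatim in this range: for instance $\eta_{5}\nu_{6}\neq\nu_{5}\eta_{8}$ in $\pi_{9}(S^{5})\cong\mathbb{Z}/2\mathbb{Z}$, since by Lemma~\ref{Todarelns}(b) and $\nu'_{5}=2\nu_{5}$ one has $\eta_{5}\nu_{6}=\nu'_{5}\eta_{8}=2(\nu_{5}\eta_{8})=0$, while $\nu_{5}\eta_{8}$ is the generator. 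To reach the form $\nu_{5}^{3}\eta_{14}$ you would need the additional Toda relation $\bar{\nu}_{6}\eta_{14}=\nu_{6}^{3}$, which is true but is not among the relations quoted in Lemma~\ref{Todarelns} and is not cited by you; as written, your appeal to (d) via $\nu_{11}\eta_{14}=0$ is applied to an expression you have not legitimately obtained, so this is more than suspension bookkeeping. The missing ingredient is exactly Lemma~\ref{Todarelns}(b), which you never invoke and which the paper uses: $\eta_{5}^{2}\bar{\nu}_{7}=\eta_{5}\nu_{6}^{3}=(\eta_{5}\nu_{6})\circ\nu_{9}\nu_{12}=\nu'_{5}\eta_{8}\nu_{9}\nu_{12}=0$, the vanishing coming from the suspension $\eta_{8}\nu_{9}=0$ of (d) (or already from $\eta_{5}\nu_{6}=2\nu_{5}\eta_{8}=0$). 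The dimension slips in your write-up ($\eta_{5}\cdot(\eta_{5}\bar{\nu}_{7})$ does not compose; ``precompose with $\eta_{4}$'' should be with $\eta_{14}$) are symptoms of the same issue, but the substantive point is that the $\eta$--$\nu$ interchange requires relation (b) or an equivalent unstable relation, not just (a) and (d).
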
 

\begin{proof} 
In what follows, we freely use the fact that the relations in Lemma~\ref{Todarelns} 
imply analogous relations for their suspensions; for example, 
$\eta_{5}\bar{\nu}_{6}\simeq\nu_{5}^{3}$ implies that $\eta_{6}\bar{\nu}_{7}\simeq\nu_{6}^{3}$. 

For part~(a), the relations in Lemma~\ref{Todarelns}~(a), (b) and (d) respectively imply 
the following string of equalities:  
$\eta_{5}^{2}\bar{\nu}_{7}\simeq\eta_{5}\nu_{6}^{3}\simeq\nu'_{5}\eta_{8}\nu_{9}^{2}\simeq\ast$.  

For part~(b), Lemma~\ref{Todarelns}~(c) and the fact that $\eta_{5}$ has order~$2$ 
imply that there are equalities $\eta_{5}^{2}\sigma'\simeq\eta_{5}(4\bar{\nu}_{6})\simeq\ast$. 
\end{proof} 

We now determine the homotopy classes of two maps into $SU(4)$. 

\begin{lemma} 
   \label{SU4relns} 
   The following hold: 
   \begin{letterlist} 
      \item the composite 
                \(\nameddright{S^{15}}{\bar{\nu}_{7}}{S^{7}}{d}{SU(4)}\) 
                is null homotopic; 
      \item the composite 
                \(\lnameddright{S^{15}}{\sigma'\eta_{14}}{S^{7}}{d}{SU(4)}\) 
                is null homotopic. 
   \end{letterlist} 
\end{lemma}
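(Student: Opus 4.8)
The plan is to prove both parts by reducing to a statement in the homotopy groups of spheres via the map $q$, exactly as in the proof of Lemma~\ref{S810dgrms}. By Lemma~\ref{pi15inj}, the map $q_{\ast}\colon\pi_{15}(SU(4))\to\pi_{15}(S^{5}\times S^{7})$ is injective, so it suffices to show that $q\circ d\circ\bar{\nu}_{7}$ and $q\circ d\circ\sigma'\eta_{14}$ are null homotopic. Since $d$ represents $[\eta_{5}^{2}\oplus\underline{2}]$, the composite $q\circ d\colon S^{7}\to S^{5}\times S^{7}$ is $\eta_{5}^{2}\times\underline{2}$. Therefore the two assertions become: (a) $(\eta_{5}^{2}\times\underline{2})\circ\bar{\nu}_{7}\simeq\ast$, and (b) $(\eta_{5}^{2}\times\underline{2})\circ\sigma'\eta_{14}\simeq\ast$.

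For each of these it is enough to check that the two coordinate projections are null. The $S^{7}$-coordinate in case (a) is $\underline{2}\circ\bar{\nu}_{7}$, which by Lemma~\ref{S72} equals $2\bar{\nu}_{7}$; since $\bar{\nu}_{7}$ has order $2$ in $\pi_{15}(S^{7})$ (see~(\ref{S15groups})), this vanishes. The $S^{7}$-coordinate in case (b) is $\underline{2}\circ\sigma'\eta_{14}\simeq 2\sigma'\eta_{14}$, which vanishes because $\eta_{14}$ has order $2$. The $S^{5}$-coordinates are $\eta_{5}^{2}\bar{\nu}_{7}$ and $\eta_{5}^{2}\sigma'\eta_{14}$ respectively. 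The first is null by Lemma~\ref{sphererelns}~(a). For the second, Lemma~\ref{sphererelns}~(b) gives $\eta_{5}^{2}\sigma'=0$, hence $\eta_{5}^{2}\sigma'\eta_{14}=0$ as well. Combining the coordinate computations, both $q\circ d\circ\bar{\nu}_{7}$ and $q\circ d\circ\sigma'\eta_{14}$ are null homotopic, and injectivity of $q_{\ast}$ finishes the argument.

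The only delicate point is the same one that appears in Lemma~\ref{S810dgrms}: one must be careful that the degree-two map $\underline{2}$ on $S^{7}$ genuinely induces multiplication by $2$ on homotopy classes out of $S^{7}$ (which it does by Lemma~\ref{S72}, since $S^{7}$ is an $H$-space) rather than relying on a naive additivity that can fail for odd spheres. Beyond that, the argument is a direct bookkeeping exercise in $\pi_{15}$ of the relevant spheres, all of whose input is recorded in~(\ref{S15groups}) and Lemmas~\ref{Todarelns} and~\ref{sphererelns}. I expect no genuine obstacle; the main work was already done in establishing Lemma~\ref{sphererelns}.
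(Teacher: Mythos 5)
Your proposal is correct and follows essentially the same route as the paper: reduce via the injectivity of $q_{\ast}$ from Lemma~\ref{pi15inj}, identify $q\circ d$ with $\eta_{5}^{2}\times\underline{2}$, kill the $S^{7}$-coordinate using Lemma~\ref{S72} together with the fact that $\bar{\nu}_{7}$ and $\sigma'\eta_{14}$ have order $2$, and kill the $S^{5}$-coordinate with Lemma~\ref{sphererelns}. The only difference is that you spell out the two coordinate projections explicitly, which the paper leaves implicit.
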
 

\begin{proof} 
By Lemma~\ref{pi15inj}, 
\(\namedright{\pi_{15}(SU(4))}{q_{\ast}}{\pi_{15}(S^{5}\times S^{7})}\) 
is an injection. So in both cases it suffices to show that the assertions hold 
after composition with 
\(\namedright{SU(4)}{q}{S^{5}\times S^{7}}\). 
Since the composite 
\(\nameddright{S^{7}}{d}{SU(4)}{q}{S^{5}\times S^{7}}\) 
is $\eta_{5}^{2}\times\underline{2}$, the two assertions will follow if we prove: 
 
(a$^{\prime}$) $(\eta_{5}^{2}\times\underline{2})\circ\bar{\nu}_{7}\simeq\ast$; 
 
(b$^{\prime}$) $(\eta_{5}^{2}\times\underline{2})\circ\sigma'\eta_{14}\simeq\ast$. 

\noindent 
By Lemma~\ref{S72}, the degree two map on $S^{7}$ induces multiplication 
by $2$ on homotopy groups, so as both $\bar{\nu}_{7}$ 
and $\sigma'\eta_{14}$ have order~$2$, it suffices to prove: 

(a$^{\prime\prime}$) $\eta_{5}^{2}\bar{\nu}_{7}\simeq\ast$; 
 
(b$^{\prime\prime}$) $\eta_{5}^{2}\sigma'\eta_{14}\simeq\ast$. 

\noindent 
Part~(a$^{\prime\prime}$) is the statement of Lemma~\ref{sphererelns}~(a) and  
part~(b$^{\prime\prime}$) is immediate from Lemma~\ref{sphererelns}~(b). 
\end{proof} 

One consequence of Lemma~\ref{SU4relns} is the existence of an extension 
involving the space $E$ appearing in the homotopy decomposition of $\Sigma^{3} C$ 
in Lemma~\ref{Cdecomp}. 

\begin{lemma} 
   \label{Eext} 
   There is an extension 
   \[\diagram 
          S^{7}\rto^-{d}\dto^{\iota} & SU(4) \\ 
          E\urto_-{e} & 
     \enddiagram\] 
   for some map $e$. 
\end{lemma}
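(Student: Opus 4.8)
The plan is to obstruction-theoretically extend $d$ over the single extra cell of $E$. Recall from Lemma~\ref{Cdecomp} that $E$ is the mapping cone of a map $s\cdot\bar{\nu}_{7}\nu_{15}\colon S^{18}\to S^{7}$, so there is a homotopy cofibration
\[\namedddright{S^{18}}{s\cdot\bar{\nu}_{7}\nu_{15}}{S^{7}}{\iota}{E}{}{S^{19}}.\]
Applying the functor $[-,SU(4)]$ to this cofibration yields an exact sequence
\[\namedddright{[S^{19},SU(4)]}{}{[E,SU(4)]}{\iota^{\ast}}{[S^{7},SU(4)]}{(s\cdot\bar{\nu}_{7}\nu_{15})^{\ast}}{[S^{18},SU(4)]}.\]
The class $d\in[S^{7},SU(4)]=\pi_{7}(SU(4))$ lifts through $\iota^{\ast}$ — that is, an extension $e$ exists — if and only if $d\circ(s\cdot\bar{\nu}_{7}\nu_{15})$ is null homotopic in $\pi_{18}(SU(4))$.

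So the only thing to check is that the composite $S^{18}\xrightarrow{s\cdot\bar{\nu}_{7}\nu_{15}} S^{7}\xrightarrow{d} SU(4)$ vanishes. If $s=0$ this is trivial. If $s=1$, the composite is $d\circ\bar{\nu}_{7}\circ\nu_{15}$, and since $d\circ\bar{\nu}_{7}\colon S^{15}\to SU(4)$ is already null homotopic by Lemma~\ref{SU4relns}~(a), precomposing with $\nu_{15}$ keeps it null homotopic. Either way $d\circ(s\cdot\bar{\nu}_{7}\nu_{15})\simeq\ast$, so the obstruction vanishes and the desired map $e$ exists.

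There is no real obstacle here: the work was already done in Lemma~\ref{SU4relns}~(a), which was precisely set up to kill the relevant composite, and the cofibration description of $E$ from Lemma~\ref{Cdecomp} reduces the extension problem to exactly that composite. I would write the proof in essentially the three lines above: quote the cofibration for $E$, apply $[-,SU(4)]$ to get the exactness/lifting criterion, and observe that the single obstruction $d\circ(s\cdot\bar{\nu}_{7}\nu_{15})$ is null by Lemma~\ref{SU4relns}~(a) regardless of the value of $s\in\mathbb{Z}/2\mathbb{Z}$.
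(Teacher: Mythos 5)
Your argument is correct and is essentially the paper's own proof: the paper likewise uses the cofibration $\llnameddright{S^{18}}{s\cdot\bar{\nu}_{7}\nu_{15}}{S^{7}}{\iota}{E}$ from Lemma~\ref{Cdecomp} and the null homotopy of $d\circ\bar{\nu}_{7}$ from Lemma~\ref{SU4relns}~(a) to conclude $d\circ(s\cdot\bar{\nu}_{7}\nu_{15})\simeq\ast$, hence the extension exists. Your extra remark spelling out the exact sequence obtained by applying $[-,SU(4)]$ to the cofibration is just the standard justification the paper leaves implicit.
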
 

\begin{proof} 
By Lemma~\ref{Cdecomp}, there is a homotopy cofibration 
\(\llnameddright{S^{18}}{u\cdot\bar{\nu}_{7}\nu_{15}}{S^{7}}{}{E}\) 
for some $u\in\mathbb{Z}/2\mathbb{Z}$. By Lemma~\ref{SU4relns}~(a), 
$d\circ\bar{\nu}_{7}$ is null homotopic. Therefore $d\circ (u\cdot\bar{\nu}_{7}\nu_{15})$ 
is null homotopic, implying that the asserted extension exists. 
\end{proof}

\section{The proof of Theorem~\ref{looppartialorder2}} 
\label{sec:proof} 

Recall from~(\ref{su4su2}) that 
\(\namedright{SU(4)}{\partial_{1}}{\Omega^{3}_{0} SU(4)}\) 
factors as the composite 
\(\nameddright{SU(4)}{q}{S^{5}\times S^{7}}{f}{\Omega^{3}_{0} SU(4)}\). 
Let 
\[f'\colon\namedright{\Sigma^{3}(S^{5}\times S^{7})}{}{SU(4)}\] 
be the triple adjoint of $f$. Let $f'_{1}$, $f'_{2}$ and $f'_{3}$ be the 
restrictions of the composite 
\[\nameddright{S^{8}\vee S^{10}\vee S^{15}}{\simeq}{\Sigma^{3}(S^{5}\vee S^{7})} 
     {f'}{SU(4)}\] 
to $S^{8}$, $S^{10}$ and $S^{15}$ respectively. We wish to identify 
$f'_{1}$, $f'_{2}$ and $f'_{3}$ more explicitly. Let 
\(t_{1}\colon\namedright{S^{5}}{}{SU(4)}\) 
and 
\(t_{2}\colon\namedright{S^{7}}{}{SU(4)}\) 
represent generators of $\pi_{5}(SU(4))\cong\mathbb{Z}$ and 
$\pi_{7}(SU(4))\cong\mathbb{Z}$ respectively. By~(\ref{SU4groups1}) these generators 
can be chosen so that $\pi\circ t_{1}$ is homotopic to $\underline{2}\oplus\ast$ 
and $\pi\circ t_{2}$ is homotopic to $\eta^{2}_{5}\oplus\underline{2}$. 
So there are homotopy commutative diagrams 
\begin{equation} 
  \label{2Bottdgrms} 
  \diagram 
     S^{5}\rto^-{t_{1}}\drto_{\underline{2}\oplus\ast} 
        & SU(4)\rto^-{\partial_{1}}\dto^{q} & \Omega^{3}_{0} SU(4)\ddouble 
        & & S^{7}\rto^-{t_{2}}\drto_{\eta^{2}_{5}\oplus\underline{2}} 
        & SU(4)\rto^-{\partial_{1}}\dto^{q} & \Omega^{3}_{0} SU(4)\ddouble \\ 
     & S^{5}\times S^{7}\rto^-{f} & \Omega^{3}_{0} SU(4) 
        & & & S^{5}\times S^{7}\rto^-{f} & \Omega^{3}_{0} SU(4). 
  \enddiagram 
\end{equation}  
On the other hand, since the triple adjoint of $\partial_{1}$ is the Samelson 
product $\langle i,1\rangle$, the triple adjoint of $\partial_{1}\circ t_{j}$ 
is $\langle t_{j},1\rangle$ for $j=1,2$. Bott~\cite[Theorem 1]{B} calculated that both of 
these maps have order~$4$. Thus the left diagram in~(\ref{2Bottdgrms}) 
implies that the restriction of $f$ to $S^{5}$ has order~$8$, and the right 
diagram in~(\ref{2Bottdgrms}) implies that the restriction of $f$ to $S^{7}$ 
has order~$8$. Thus, taking triple adjoints, $f'_{1}$ and $f'_{2}$ both 
have order~$8$. 

The order of $f'_{3}$ is not as clear. By~(\ref{SU4groups}), 
$\pi_{15}(SU(4))\cong\mathbb{Z}/8\mathbb{Z}\oplus\mathbb{Z}/2\mathbb{Z}$, 
so $f'_{3}$ may have order~$8$. This ambiguity will be reflected in the alternative 
possibilities worked out below.  

Recall from Lemma~\ref{jclass} that there is a homotopy commutative diagram 
\[\diagram 
       S^{8}\vee S^{10}\vee S^{15}\rto^-{\simeq}\dto^{a+b+c}  
            & \Sigma^{3}(S^{5}\times S^{7})\dto^{\Sigma^{3} j} \\ 
       E\vee S^{12}\vee S^{14}\rto^-{\simeq} & \Sigma^{3} C   
  \enddiagram\]   
where $a$, $b$ and $c$ respectively are the composites 
\begin{align*} 
       & a\colon\nameddright{S^{8}}{\eta_{7}}{S^{7}}{\iota}{E}\hookrightarrow 
             E\vee S^{12}\vee S^{14} \\  
       & b\colon\nameddright{S^{10}}{\nu'_{7}}{S^{7}}{\iota}{E}\hookrightarrow 
             E\vee S^{12}\vee S^{14} \\ 
       & c\colon\llllnameddright{S^{15}}{\psi+s\cdot\nu'_{12}+\eta_{14}}{S^{7}\vee S^{12}\vee S^{14}} 
             {\iota\vee 1\vee 1}{E\vee S^{12}\vee S^{14}} 
\end{align*} 
and $\psi=t\cdot\sigma'\eta_{14}$ for some $t\in\mathbb{Z}/2\mathbb{Z}$. 
Let $c'$ be the composite 
\[c'\colon\llllnameddright{S^{15}}{\psi'+s\cdot\nu'_{12}+\eta_{14}}{S^{7}\vee S^{12}\vee S^{14}} 
     {\iota\vee 1\vee 1}{E\vee S^{12}\vee S^{14}}\] 
where $\psi'=t\cdot\sigma'\eta_{14}+\eta_{7}\sigma_{8}$. Let $\xi$ 
be the composite 
\[\xi\colon\nameddright{E\vee S^{12}\vee S^{14}}{}{E}{e}{SU(4)}\] 
where the left map is the pinch onto the first wedge summand and $e$ is 
the map from Lemma~\ref{Eext}. 

\begin{lemma} 
   \label{alternative} 
   There is a homotopy commutative diagram 
   \[\diagram 
          S^{8}\vee S^{10}\vee S^{15}\rrto^-{f'_{1}+f'_{2}+f'_{3}}\dto^{a+b+\gamma} 
             & & SU(4)\dto^{4} \\ 
          E\vee S^{12}\vee S^{14}\rrto^-{\xi} & & SU(4) 
     \enddiagram\] 
   where $\gamma$ may be chosen to be $c$ if the order of $f'_{3}$ is at most $4$ 
   and $\gamma$ may be chosen to be $c'$ if the order of $f'_{3}$ is $8$. Further, 
   in the latter case, the composite 
   \(\namedddright{S^{15}}{\eta_{7}\sigma_{8}}{S^{7}}{\iota}{E}{e}{SU(4)}\) 
   represents $4[\nu_{5}\oplus\eta_{7}]\circ\sigma_{8}$.
\end{lemma}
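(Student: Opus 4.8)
The plan is to work summand-by-summand on the wedge $S^8\vee S^{10}\vee S^{15}$, reducing everything to statements about homotopy classes of maps into $SU(4)$ that can be checked by composing with $\namedright{SU(4)}{q}{S^5\times S^7}$, using the injectivity results of Lemmas~\ref{pi810inj} and~\ref{pi15inj}. The reference map is $f'=f'_1+f'_2+f'_3$ and the claim is that, after postcomposing with the $4$th-power map on $SU(4)$, it becomes $\xi\circ(a+b+\gamma)$ for an appropriate choice of $\gamma$.

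First I would handle the $S^8$ and $S^{10}$ summands. By Lemma~\ref{jclass}, $\Sigma^3 j$ restricted to $S^8$ is $\iota\circ\eta_7$ and restricted to $S^{10}$ is $\iota\circ\nu'_7$; the cofibration~(\ref{Ccofib}) together with the factorization $\partial_1\simeq f\circ\pi$ shows that $f'$ precomposed with $\pi$-related data is controlled by $j$, so up to adjoints $f'_1$ and $f'_2$ differ from $d\circ\eta_7$ and $d\circ\nu'_7$ only by the ambiguity in how the bottom cell of $SU(4)$ sits inside. The key input is Lemma~\ref{S810dgrms}: it says precisely that $4\circ[\nu_5\oplus\eta_7]\simeq d\circ\eta_7$ and $4\circ[\nu_7]\simeq d\circ\nu'_7$. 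Since $e\circ\iota\simeq d$ by Lemma~\ref{Eext}, the composites $\xi\circ a$ and $\xi\circ b$ are exactly $d\circ\eta_7$ and $d\circ\nu'_7$, matching $4\circ f'_1$ and $4\circ f'_2$. So the $S^8$ and $S^{10}$ parts of the diagram commute with $\gamma$ irrelevant on these summands.

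Next comes the $S^{15}$ summand, which is where the case split on the order of $f'_3$ enters. By Lemma~\ref{jclass} the relevant map on $S^{15}$ is $c$, built from $\psi+\nu'_{12}+\eta_{14}$ with $\psi=t\cdot\sigma'\eta_{14}$. When the order of $f'_3$ is at most $4$, we have $4\circ f'_3\simeq\ast$ on the order-reason grounds that $\pi_{15}(SU(4))$'s only relevant summand is killed, and on the other side $\xi\circ c$ is $e$ composed with $\iota\circ\psi=\iota\circ(t\cdot\sigma'\eta_{14})$; by Lemma~\ref{SU4relns}~(b), $d\circ\sigma'\eta_{14}\simeq\ast$, hence $\xi\circ c\simeq\ast$, so $\gamma=c$ works. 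When $f'_3$ has order $8$, $4\circ f'_3$ is the nontrivial $2$-torsion element $4[\nu_5\oplus\eta_7]\circ\sigma_8$ of $\pi_{15}(SU(4))$; I would absorb this discrepancy into $\gamma$ by replacing $\psi$ with $\psi'=t\cdot\sigma'\eta_{14}+\eta_7\sigma_8$, i.e. taking $\gamma=c'$. Then $\xi\circ c'$ picks up the extra term $e\circ\iota\circ\eta_7\sigma_8=d\circ\eta_7\sigma_8$, and the final sentence of the lemma is exactly the assertion that this equals $4[\nu_5\oplus\eta_7]\circ\sigma_8$. This last identity I would verify by composing with $q$ and using Lemma~\ref{pi15inj}: the composite $q\circ d\circ\eta_7\sigma_8$ is $(\eta_5^2\times\underline 2)\circ\eta_7\sigma_8$, and Lemma~\ref{S72} plus the order-$2$ facts from~(\ref{S15groups}) and the relations in Lemma~\ref{Todarelns}/\ref{sphererelns} reduce this to the image of $4[\nu_5\oplus\eta_7]\circ\sigma_8$ under $q_*$, essentially the same computation that proved the left square of Lemma~\ref{S810dgrms}.

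The main obstacle I anticipate is pinning down the $S^{15}$ discrepancy term precisely rather than just up to the torsion subgroup: one must be sure that the only way $4\circ f'_3$ can fail to be null is via the single class $4[\nu_5\oplus\eta_7]\circ\sigma_8$, and that feeding $\eta_7\sigma_8$ through $\iota$ and $e$ lands exactly there and not merely up to some further ambiguity in $\pi_{15}(SU(4))$. This forces a careful bookkeeping of which classes in $\pi_{15}(S^7)$ and $\pi_{15}(SU(4))$ are stable versus unstable, leaning on the facts recorded in~(\ref{SU4groups}) and~(\ref{S15groups}) that the unstable part is detected by $q_*$ being injective; the $S^8$, $S^{10}$ parts are comparatively routine once Lemmas~\ref{Eext} and~\ref{S810dgrms} are in hand.
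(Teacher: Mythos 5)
Your overall plan coincides with the paper's proof: work one wedge summand at a time, use $e\circ\iota\simeq d$ from Lemma~\ref{Eext} to compute $\xi\circ a$, $\xi\circ b$, $\xi\circ c$, use Lemma~\ref{SU4relns}~(b) to kill $d\circ\sigma'\eta_{14}$, split into cases on the order of $f'_{3}$, and verify $d\circ\eta_{7}\sigma_{8}\simeq 4[\nu_{5}\oplus\eta_{7}]\circ\sigma_{8}$ by composing with $q$ and invoking Lemma~\ref{pi15inj}, Lemma~\ref{S72} and $4\nu_{5}\simeq\eta_{5}^{3}$. That part is fine.

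However, there is a gap at the point where you say that $\xi\circ a\simeq d\circ\eta_{7}$ and $\xi\circ b\simeq d\circ\nu'_{7}$ ``match'' $4\circ f'_{1}$ and $4\circ f'_{2}$. Lemma~\ref{S810dgrms} gives $4\circ[\nu_{5}\oplus\eta_{7}]\simeq d\circ\eta_{7}$ and $4\circ[\nu_{7}]\simeq d\circ\nu'_{7}$; what is still needed, and what the paper supplies, is the identification $4f'_{1}\simeq 4[\nu_{5}\oplus\eta_{7}]$ and $4f'_{2}\simeq 4[\nu_{7}]$. This follows because $f'_{1}$ and $f'_{2}$ have order $8$ (from Bott's calculation, recorded just before the lemma) while $\pi_{8}(SU(4))\cong\mathbb{Z}/8\mathbb{Z}$ is generated by $[\nu_{5}\oplus\eta_{7}]$ and $\pi_{10}(SU(4))\cong\mathbb{Z}/8\mathbb{Z}\oplus\mathbb{Z}/2\mathbb{Z}$ has $[\nu_{7}]$ as its order-$8$ generator; hence $f'_{1}$, and the $\mathbb{Z}/8\mathbb{Z}$-component of $f'_{2}$, are odd multiples of the generators, and multiplying by $4$ kills both the unit and any order-$2$ contribution. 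Your stated justification --- that $f'_{1}$, $f'_{2}$ ``differ from $d\circ\eta_{7}$, $d\circ\nu'_{7}$ only by the ambiguity in how the bottom cell sits,'' with $f'$ ``controlled by $j$'' via the cofibration~(\ref{Ccofib}) --- is not correct: $f'_{1}$ has order $8$ while $d\circ\eta_{7}$ has order $2$, and $j$ lives in the other column of the square, so it carries no information about $f'$. The same unit-multiple argument is what justifies your assertion that $4f'_{3}\simeq 4[\nu_{5}\oplus\eta_{7}]\circ\sigma_{8}$ when $f'_{3}$ has order $8$ (here the order-$2$ class $[\sigma'\eta_{14}]$ is killed by $4$); you state the conclusion but should make that step explicit as well. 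Once these identifications are inserted, the remainder of your argument --- $\gamma=c$ with both composites null when $f'_{3}$ has order at most $4$, and $\gamma=c'$ with the $q_{*}$-injectivity check otherwise --- is exactly the paper's proof.
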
 

\begin{proof} 
First, consider the diagram 
\begin{equation} 
  \label{dgrm1} 
  \diagram
       S^{8}\vee S^{10}\rto^-{f'_{1}+f'_{2}}\dto^{\eta_{7}+\nu'_{7}} 
            & SU(4)\dto^{4} \\ 
       S^{7}\rto^-{d}\dto^{\iota} & SU(4)\ddouble \\ 
       E\rto^-{e} & SU(4).  
  \enddiagram 
\end{equation}  
Since $\pi_{8}(SU(4))\cong\mathbb{Z}/8\mathbb{Z}$ is generated by 
$[\nu_{5}\oplus\eta_{7}]$ and $f'_{1}$ has order~$8$, we must have 
$f'_{1}=u\cdot[\nu_{5}\oplus\eta_{7}]$ for some unit $u\in\mathbb{Z}/8\mathbb{Z}$. 
Thus $4f'_{1}\simeq 4[\nu_{5}\oplus\eta_{7}]$, so the restriction of the upper square 
in~(\ref{dgrm1}) to~$S^{8}$ homotopy commutes by Lemma~\ref{S810dgrms}. Similarly,  
since $\pi_{10}(SU(4))\cong\mathbb{Z}/8\mathbb{Z}\oplus\mathbb{Z}/2\mathbb{Z}$ 
with $[\nu_{7}]$ being the generator of order~$8$, and $f'_{2}$ has order $8$, we 
must have $4f'_{2}\simeq 4[\nu_{7}]$, so the restriction of the upper square 
in~(\ref{dgrm1}) to $S^{10}$ homotopy commutes by Lemma~\ref{S810dgrms}. 
The lower square in~(\ref{dgrm1}) homotopy commutes by Lemma~\ref{Eext}. 
Now observe that the anticlockwise path around~(\ref{dgrm1}) is the definition of $\xi\circ(a+b)$. 
Thus~(\ref{dgrm1}) implies that the diagram in the statement of the lemma 
homotopy commutes when restricted to $S^{8}\vee S^{10}$. 

Second, consider the diagram 
\begin{equation} 
  \label{dgrm2} 
  \diagram 
      S^{15}\dto^{(t\cdot\sigma'\eta_{14}+\theta)+s\cdot\nu'_{12}+\eta_{14}}\rrto^-{f'_{3}} 
           & & SU(4)\ddto^{4} \\ 
      S^{7}\vee S^{12}\vee S^{15}\dto^{\iota\vee 1\vee 1} & & \\ 
      E\vee S^{12}\vee S^{14}\rrto^-{\xi} & & SU(4) 
  \enddiagram 
\end{equation}   
where two possibilities for $\theta$ will be considered. At the bottom of the diagram, by definition, $\xi$ is the composite 
\(\nameddright{E\vee S^{12}\vee S^{14}}{}{E}{e}{SU(4)}\) 
where the left map is the pinch onto the first wedge summand. By 
Lemma~\ref{Eext}, $e\circ\iota=d$. Thus the anticlockwise way around the 
diagram is homotopic to the composite 
\(\lllnameddright{S^{15}}{t\cdot\sigma'\eta_{14}+\theta}{S^{7}}{d}{SU(4)}\).  
By Lemma~\ref{SU4relns}~(b), $d\circ t\cdot\sigma'\eta_{14}$ is null homotopic. 
Thus the lower direction around the diagram is in fact homotopic to the composite 
\(\nameddright{S^{15}}{\theta}{S^{7}}{d}{SU(4)}\). 

If $f'_{3}$ has order at most $4$ then $4f'_{3}$ is null homotopic. Taking  
$\theta$ to be the constant map shows that~(\ref{dgrm2}) homotopy 
commutes. Observe also that with this choice of $\theta$ the left column 
in~(\ref{dgrm2}) is the definition of $c$, so we obtain the diagram in the 
statement of the lemma when restricted to $S^{15}$. Now combining~(\ref{dgrm1}) 
and~(\ref{dgrm2}) we obtain the diagram asserted by the lemma. 

Suppose that $f'_{3}$ has order~$8$. Since 
$\pi_{15}(SU(4))\cong\mathbb{Z}/8\mathbb{Z}\oplus\mathbb{Z}/2\mathbb{Z}$ 
with the order~$8$ generator being $[\nu_{5}\oplus\eta_{7}]\circ\sigma_{8}$, 
we obtain $4f'_{3}\simeq 4[\nu_{5}\oplus\eta_{7}]\circ\sigma_{8}$. Take  
$\theta=\eta_{7}\sigma_{8}$. We claim that 
$d\circ\theta\simeq 4[\nu_{5}\circ\eta_{7}]\circ\sigma_{8}$. If so then~(\ref{dgrm2}) 
homotopy commutes with this choice of $\theta$ and, as the left column of~(\ref{dgrm2}) 
is the definition of $c'$, we obtain the diagram in the statement of the lemma 
when restricted to $S^{15}$. Therefore combining~(\ref{dgrm1}) and~(\ref{dgrm2}) 
we obtain the diagram asserted by the lemma.  

It remains to show that 
$d\circ\eta_{7}\sigma_{8}\simeq 4[\nu_{5}\oplus\eta_{7}]\circ\sigma_{8}$. 
By Lemma~\ref{pi15inj} it suffices to compose with 
\(\namedright{SU(4)}{q}{S^{5}\times S^{7}}\) 
and check there. On the one hand, 
$q\circ d\circ\eta_{7}\sigma_{8}\simeq 
    (\eta_{5}^{2}\times\underline{2})\circ\eta_{7}\sigma_{8}\simeq 
    \eta_{5}^{3}\sigma_{8}$,  
where the left homotopy holds by definition of $d$ and the right homotopy is 
due to the fact that $\eta_{7}$ has order~$2$ and, by Lemma~\ref{S72}, 
$\underline{2}$ induces multiplication by $2$ on homotopy groups. On 
the other hand, 
$q\circ 4[\nu_{5}\oplus\eta_{7}]\circ\sigma_{8}\simeq 
     4(\nu_{5}\times\eta_{7})\circ\sigma_{8}\simeq 4\nu_{5}\sigma_{8}\simeq 
     \eta_{5}^{3}\sigma_{8}$. 
Here, from left to right, the first homotopy holds by definition of $[\nu_{5}\oplus\eta_{7}]$, 
the second holds since $\eta_{7}$ has order~$2$, and the third holds by Lemma~\ref{Todarelns1}~(b). 
Thus $d\circ\eta_{7}\circ\sigma_{8}\simeq 4[\nu_{5}\oplus\eta_{7}]\circ\sigma_{8}$, 
as claimed. 
\end{proof} 

Now return to the map 
\(\namedright{SU(4)}{\partial_{1}}{\Omega_{0}^{3} SU(4)}\). 

\begin{proposition} 
   \label{order4options} 
   The following hold: 
   \begin{letterlist} 
      \item if $f'_{3}$ has order at most $4$ then $4\circ\partial_{1}$ is null homotopic;  
      \item if $f'_{3}$ has order $8$ then $4\circ\partial_{1}$ is homotopic to the composite 
                \(\namedddright{SU(4)}{q}{S^{5}\times S^{7}}{}{S^{12}}{4\chi} 
                      {\Omega^{3}_{0} SU(4)}\), 
                 where the middle map is the pinch map to the top cell and $\chi$ 
                 is the triple adjoint of the order~$8$ generator 
                 $[\nu_{5}\oplus\eta_{7}]\circ\sigma_{8}$ in $\pi_{15}(SU(4))$. 
   \end{letterlist} 
\end{proposition}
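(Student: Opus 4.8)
The plan is to analyse $4\circ\partial_{1}$ through the factorization $\partial_{1}\simeq f\circ\pi$ of~(\ref{su4su2}), so that $4\circ\partial_{1}\simeq(4\circ f)\circ\pi$, and to exploit the fact that $j\circ\pi$ is null homotopic since $\pi$ and $j$ are consecutive maps in the homotopy cofibration~(\ref{Ccofib}). The triple adjoint of $4\circ f$ is $4\circ f'$, where $4$ now denotes the $4^{th}$-power map on $SU(4)$, and this is exactly the map identified in Lemma~\ref{alternative}. Throughout I would use freely the standard facts that the triple adjunction $[\Sigma^{3}A,B]\cong[A,\Omega^{3}B]$ is a natural isomorphism of groups, that pre- and post-composition with a fixed map are additive for the relevant co-$H$ (wedge of spheres) and $H$ (loop space) structures, that looping the $4^{th}$-power map on $SU(4)$ recovers the $4^{th}$-power map on $\Omega^{3}_{0}SU(4)$ up to homotopy, and that under the homotopy equivalence $\Sigma^{3}(S^{5}\times S^{7})\simeq S^{8}\vee S^{10}\vee S^{15}$ the projection onto the $S^{15}$ summand is $\Sigma^{3}q$, where $q\colon S^{5}\times S^{7}\to S^{12}$ is the pinch onto the top cell.

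For part~(a), suppose $f'_{3}$ has order at most $4$. Then Lemma~\ref{alternative} (with $\gamma=c$) gives $4\circ f'\simeq\xi\circ(a+b+c)$ after identifying $S^{8}\vee S^{10}\vee S^{15}$ with $\Sigma^{3}(S^{5}\times S^{7})$, and Lemma~\ref{jclass} identifies $a+b+c$ with $\Sigma^{3}j$ conjugated by the two equivalences appearing there. Absorbing these equivalences and $\xi$ into a single map $\Xi\colon\Sigma^{3}C\to SU(4)$, this reads $4\circ f'\simeq\Xi\circ\Sigma^{3}j$. Taking triple adjoints and using naturality in the source gives $4\circ f\simeq\overline{\Xi}\circ j$, where $\overline{\Xi}\colon C\to\Omega^{3}_{0}SU(4)$ is the adjoint of $\Xi$ (which lands in the basepoint component because $C$ is connected). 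Hence $4\circ\partial_{1}\simeq(4\circ f)\circ\pi\simeq\overline{\Xi}\circ(j\circ\pi)\simeq\ast$.

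For part~(b), suppose $f'_{3}$ has order $8$. Then Lemma~\ref{alternative} (with $\gamma=c'$) gives $4\circ f'\simeq\xi\circ(a+b+c')$. By construction $c'$ differs from $c$ only by replacing $\psi$ with $\psi+\eta_{7}\sigma_{8}$, so on the $S^{15}$ summand one has $c'=c+\iota\circ\eta_{7}\sigma_{8}$ in $\pi_{15}(E\vee S^{12}\vee S^{14})$, the correction being supported on the wedge summand $E$. Since post-composition with $\xi$ is additive and $\xi$ restricted to $E$ is $e$, this yields $4\circ f'\simeq\Xi\circ\Sigma^{3}j+(e\circ\iota\circ\eta_{7}\sigma_{8})\circ\Sigma^{3}q$, with the first summand as in part~(a). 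By the final assertion of Lemma~\ref{alternative}, $e\circ\iota\circ\eta_{7}\sigma_{8}$ represents $4[\nu_{5}\oplus\eta_{7}]\circ\sigma_{8}$, the fourfold multiple of the order-$8$ generator of $\pi_{15}(SU(4))$. Taking triple adjoints as before, the first summand becomes $\overline{\Xi}\circ j$ and the second becomes $4\chi\circ q$, where $\chi\colon S^{12}\to\Omega^{3}_{0}SU(4)$ is the triple adjoint of $[\nu_{5}\oplus\eta_{7}]\circ\sigma_{8}$. Composing with $\pi$ and using $j\circ\pi\simeq\ast$ leaves $4\circ\partial_{1}\simeq 4\chi\circ q\circ\pi$, which is the asserted composite.

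The step requiring genuine care is the identification of the correction term in part~(b): one must verify that the difference between $c'$ and $c$ is precisely the single class $\iota\circ\eta_{7}\sigma_{8}$ supported on the top cell, that composing with $\xi$ collapses it to $e\circ\iota\circ\eta_{7}\sigma_{8}$, and that this has already been evaluated in Lemma~\ref{alternative} as four times the order-$8$ generator, so that after taking adjoints it becomes the pinch followed by $4\chi$. The remainder is formal bookkeeping with the adjunction and the cofibration relation $j\circ\pi\simeq\ast$; the component issues are trivial since the relevant suspensions, $C$, and $S^{12}$ are all connected.
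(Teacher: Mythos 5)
Your proof is correct and follows essentially the same route as the paper: both cases rest on Lemma~\ref{alternative} together with Lemma~\ref{jclass}, the null homotopy of $j\circ\pi$ from the cofibration~(\ref{Ccofib}), and in case~(b) the identification of the correction term $e\circ\iota\circ\eta_{7}\sigma_{8}$ with $4[\nu_{5}\oplus\eta_{7}]\circ\sigma_{8}$ plus the fact that the pinch onto the top cell is a triple suspension. The only difference is bookkeeping order (you take adjoints before composing with $\pi$, the paper composes with $\Sigma^{3}\pi$ and adjoints at the end), which does not change the argument.
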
 

\begin{proof} 
If the order of $f'_{3}$ is at most~$4$, then in Lemma~\ref{alternative} we 
may take $\gamma=c$. Doing so, observe that by using the inverse 
equivalences in Lemma~\ref{jclass} we obtain a homotopy commutative diagram 
\begin{equation} 
  \label{Cjdgrm} 
  \diagram 
        \Sigma^{3}(S^{5}\times S^{7})\rto^-{f'}\dto^{\Sigma^{3} j} & SU(4)\dto^{4} \\ 
        \Sigma^{3} C\rto^-{\xi'} & SU(4) 
  \enddiagram 
\end{equation}  
where $\xi'$ is the composite 
\(\nameddright{\Sigma^{3} C}{\simeq}{E\vee S^{12}\vee S^{14}}{\xi}{SU(4)}\). 
Now consider the diagram 
\[\diagram 
      SU(4)\rto^-{\partial_{1}}\dto^{q} & \Omega^{3}_{0} SU(4)\ddouble \\ 
      S^{5}\times S^{7}\rto^-{f}\dto^{j} & \Omega^{3}_{0} SU(4)\dto^{4} \\ 
      C\rto & \Omega^{3}_{0} SU(4) 
  \enddiagram\]
The top square homotopy commutes by~(\ref{su4su2}) while the bottom 
square is the triple adjoint of~(\ref{Cjdgrm}). Since the left 
column consists of two consecutive maps in a homotopy cofibration sequence  
it is null homotopic. The homotopy commutativity of the diagram therefore 
implies that~$4\circ\partial_{1}$ is null homotopic. 

If the order of $f'_{3}$ is $8$, then in Lemma~\ref{alternative} we may 
take $\gamma=c'$. Doing so, since $c'=c+\eta_{7}\sigma_{8}$, instead 
of~(\ref{Cjdgrm}) we obtain a homotopy commutative diagram 
\begin{equation} 
  \label{Cj2dgrm} 
  \diagram 
        \Sigma^{3}(S^{5}\times S^{7})\rto^-{f'}\dto^{\Sigma^{3} j+\ell} & SU(4)\dto^{4} \\ 
        \Sigma^{3} C\rto^-{\xi'} & SU(4) 
  \enddiagram 
\end{equation}  
where $\ell$ is the composite 
\(\nameddright{\Sigma^{3}(S^{5}\times S^{7})}{}{S^{15}}{\eta_{7}\sigma_{8}} 
     {S^{7}}\hookrightarrow\Sigma^{3} C\). 
Now consider the diagram 
\[\diagram 
      \Sigma^{3} SU(4)\rto^-{\partial'_{1}}\dto^{\Sigma^{3} q} & SU(4)\ddouble \\ 
      \Sigma^{3}(S^{5}\times S^{7})\rto^-{f'}\dto^{\Sigma^{3} j+\ell} & SU(4)\dto^{4} \\ 
      \Sigma^{3} C\rto^-{\xi'} & SU(4) 
  \enddiagram\] 
where $\partial'_{1}$ is the triple adjoint of $\partial$. The top square homotopy 
commutes by~(\ref{su4su2}) while the bottom square homotopy commutes 
by~(\ref{Cj2dgrm}). Since $\Sigma^{3} j\circ\Sigma^{3} q$ are consecutive 
maps in a homotopy cofibration, their composite is null homotopic. Thus 
this diagram implies that $4\circ\partial'_{1}$ is homotopic to the composite 
\(\namedddright{\Sigma^{3} SU(4)}{\Sigma^{3} q}{\Sigma^{3}(S^{5}\times S^{7})} 
     {}{S^{15}}{\eta_{7}\sigma_{8}}{S^{7}}\hookrightarrow\namedright{\Sigma^{3} C} 
     {\xi'}{SU(4)}\). 
Notice that the pinch map to the top cell 
\(\namedright{\Sigma^{3}(S^{5}\times S^{7})}{}{S^{15}}\) 
is a triple suspension, while by Lemma~\ref{alternative} the composite 
\(\namedright{S^{15}}{\eta_{7}\sigma_{8}}{S^{7}}\hookrightarrow\namedright{\Sigma^{3} C} 
        {\xi'}{SU(4)}\) 
represents $4[\nu_{5}\oplus\eta_{7}]\circ\sigma_{8}$. Thus, taking triple adjoints, 
$4\circ\partial_{1}$ is homotopic to the composite 
\(\namedddright{SU(4)}{q}{S^{5}\times S^{7}}{}{S^{12}}{4\chi}{SU(4)}\), 
as asserted.  
\end{proof} 

\begin{remark} 
It can be checked that if $f'_{3}$ has order~$8$ then there does not exist a 
map~$\xi$ such that $\xi\circ(a+b+c)\simeq 4f'$ in Lemma~\ref{alternative}. 
The argument is to check all possible cases; it is not included as it is not needed. 
This leads to the conclusion that if $f'_{8}$ has order~$8$ then 
$4\circ\partial_{1}$ is nontrivial; for if it were trivial then 
$4\circ\partial_{1}\simeq 4\circ f\circ\pi$ 
would have to factor through the cofibre $C$ of $\pi$, implying that there 
is a map $\xi$ such that $\xi\circ(a+b+c)\simeq 4f'$.
\end{remark}

\begin{theorem} 
   \label{partialorder4} 
   The following hold: 
   \begin{letterlist} 
      \item if $f'_{3}$ has order $4$ then $\partial_{1}$ has order $4$; 
      \item if $f'_{3}$ has order $8$ then $\Omega\partial_{1}$ has order $4$. 
   \end{letterlist} 
\end{theorem}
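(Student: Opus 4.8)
The plan is to handle the two cases of Theorem~\ref{partialorder4} using the outputs of Proposition~\ref{order4options}, combining them with the lower bound already recorded in Lemma~\ref{loopHKlemma} (that $60$, and hence $4$ at the prime $2$, divides the order of $\Omega\partial_{1}$, and likewise $4$ divides the order of $\partial_{1}$). For part (a), Proposition~\ref{order4options}(a) gives immediately that if $f'_{3}$ has order at most $4$ then $4\circ\partial_{1}$ is null homotopic, so the order of $\partial_{1}$ divides $4$; combined with the lower bound of $4$ from Lemma~\ref{loopHKlemma}, the order is exactly $4$. (Strictly, the hypothesis of (a) is that $f'_{3}$ has order $4$, but since $2$ divides $4$, the case ``order $\leq 4$'' covers it; one should note the order is genuinely $4$ and not smaller precisely because of Lemma~\ref{loopHKlemma}.)

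For part (b), the situation is subtler because Proposition~\ref{order4options}(b) does not show $4\circ\partial_{1}$ is null homotopic — indeed the Remark indicates it is not — but only that it factors as
\[\namedddright{SU(4)}{\pi}{S^{5}\times S^{7}}{}{S^{12}}{4\chi}{\Omega^{3}_{0} SU(4)},\]
where $4\chi$ is the triple adjoint of $4[\nu_{5}\oplus\eta_{7}]\circ\sigma_{8}\in\pi_{15}(SU(4))$. The key point is that this factors through the pinch map $S^{5}\times S^{7}\to S^{12}$ to the top cell. After looping, this pinch map becomes null homotopic: looping the composite, $\Omega(4\circ\partial_{1})$ factors through $\Omega S^{5}\times\Omega S^{7}\to\Omega S^{12}$, and the loops of a pinch map to the top cell of a product of spheres is null homotopic because the inclusion $S^{5}\vee S^{7}\hookrightarrow S^{5}\times S^{7}$ splits after one suspension (equivalently, the James splitting / the fact that $\Sigma(S^5\times S^7)\simeq S^6\vee S^8\vee S^{13}$ makes the suspended pinch a retraction of a wedge summand, hence its loops on the original pinch are null). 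So I would argue: $4\circ\Omega\partial_{1}\simeq\Omega(4\circ\partial_{1})$ is, by Proposition~\ref{order4options}(b), the loop of a composite through the top-cell pinch $S^{5}\times S^{7}\to S^{12}$; since $\Omega$ of this pinch is null homotopic, $4\circ\Omega\partial_{1}$ is null homotopic; hence the order of $\Omega\partial_{1}$ divides $4$, and by Lemma~\ref{loopHKlemma} it is exactly $4$.

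The main obstacle — and the step deserving the most care — is verifying that $\Omega$ of the pinch map $p\colon\namedright{S^{5}\times S^{7}}{}{S^{12}}$ is null homotopic. The cleanest route is: the cofibre sequence $S^{5}\vee S^{7}\hookrightarrow S^{5}\times S^{7}\stackrel{p}{\to}S^{12}$ splits after suspension, so $\Sigma p$ admits a section $\Sigma s\colon\namedright{S^{13}}{}{\Sigma(S^{5}\times S^{7})}$; but more is true — since the Whitehead product $[\iota_5,\iota_7]$ vanishes after suspension, the inclusion $i\colon S^5\vee S^7\hookrightarrow S^5\times S^7$ has a left inverse after suspension, i.e. $\Sigma i$ is a split monomorphism, which forces $\Sigma p$ to be null homotopic on the relevant summand — wait, rather, the correct statement I want is that $p$ itself, while not null, becomes null after looping. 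The mechanism: $\Omega p$ factors as $\Omega(S^5\times S^7)=\Omega S^5\times\Omega S^7\xrightarrow{\Omega p}\Omega S^{12}$, and on each factor $\Omega S^5\to\Omega S^{12}$ and $\Omega S^7\to\Omega S^{12}$ the map is induced by $S^5\to S^{12}$, $S^7\to S^{12}$ which are null for dimension reasons; since $\Omega S^{12}$ is an $H$-space and $\Omega p$ restricted to each factor of the product is null, $\Omega p$ itself is null homotopic (the product map into a homotopy-associative $H$-space is determined by its restrictions to the factors, and here both are trivial, so $\Omega p$ is homotopic to the trivial product, i.e. null). I would write this last argument out carefully, as it is the linchpin distinguishing part (b) from part (a), and it is precisely the ``ambiguity vanishing after looping'' phenomenon advertised in the introduction. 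I would then assemble: in case (a), $\partial_1$ has order $4$ (so a fortiori $\Omega\partial_1$ does); in case (b), $\Omega\partial_1$ has order $4$; either way Theorem~\ref{looppartialorder2} follows, since Lemma~\ref{alternative}/Proposition~\ref{order4options} exhaust the two possibilities for the order of $f'_3$ (which is $4$ or $8$ by the table~(\ref{SU4groups})).
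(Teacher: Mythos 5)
Your overall strategy is the same as the paper's: part (a) is exactly the paper's argument (Proposition~\ref{order4options}(a) gives $4\circ\partial_{1}\simeq\ast$, and the lower bound from Lemma~\ref{HKlemma}/\ref{loopHKlemma} pins the order at $4$), and for part (b) you correctly reduce everything to the single claim that the pinch map $Q\colon S^{5}\times S^{7}\to S^{12}$ becomes null homotopic after looping, which is precisely the paper's route. The problem is the justification you give for that claim. The principle you invoke --- that a map from a product into a (homotopy-associative) $H$-space is determined by its restrictions to the two factors --- is false. A map $X\times Y\to Z$ into a loop space which is trivial on the wedge only factors through the smash $X\wedge Y$, and such a factorization need not be null; the standard counterexample is the commutator map $S^{3}\times S^{3}\to S^{3}$, which is trivial on each factor and lands in an $H$-space (indeed a Lie group), yet is essential because the induced map on the smash is the nontrivial Samelson product $\langle\iota,\iota\rangle$. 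In your situation the smash term is $\Omega S^{5}\wedge\Omega S^{7}$, and $[\Omega S^{5}\wedge\Omega S^{7},\Omega S^{12}]$ is certainly not zero (it has cells in dimensions where $\pi_{*}(\Omega S^{12})$ is nontrivial, e.g.\ dimension $14$), so triviality on the factors alone proves nothing. Your earlier detour through the suspension splitting is also a dead end --- a section of $\Sigma Q$ says nothing about nullity, and in fact $\Sigma Q$ is essential --- though you abandoned that yourself.

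The step is true and can be repaired in either of two ways. One is to use that $\Omega Q$ is not just any map into an $H$-space but a loop map, hence an $H$-map: writing the identity of $\Omega S^{5}\times\Omega S^{7}$ as the $H$-space product of the two composites (projection followed by inclusion), an $H$-map which is null on each factor is then null on the whole product. The other is the paper's argument: the composite of the inclusion $i\colon S^{5}\vee S^{7}\to S^{5}\times S^{7}$ with $Q$ is null homotopic, and $\Omega i$ admits a right homotopy inverse $r$ by the Hilton--Milnor theorem, so $\Omega Q\simeq\Omega Q\circ\Omega i\circ r\simeq\ast$. With either repair your proof of (b) goes through and agrees with the paper's; as written, however, the linchpin step --- the one you yourself flag as needing the most care --- rests on an invalid general principle.
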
 

\begin{proof} 
By Proposition~\ref{order4options}, if $f'_{3}$ has order $4$ then $4\circ\partial_{1}$ 
is null homotopic, implying that $\partial_{1}$ has order at most $4$. On the 
other hand, by Lemma~\ref{HKlemma}, the order of $\partial_{1}$ is divisible by $4$. 
Thus $\partial_{1}$ has order $4$. 

Next, in general, the quotient map 
\(\namedright{X\times Y}{Q}{X\wedge Y}\) 
is null homotopic after looping. For if 
\(i\colon\namedright{X\vee Y}{}{X\times Y}\) 
is the inclusion of the wedge into the product then $\Omega i$ has a right 
homotopy inverse. Therefore $\Omega Q$ factors through $\Omega Q\circ\Omega i$, 
which is null homotopic since $Q\circ i$ is.  
In our case, if $f'_{3}$ has order $8$ then 
Proposition~\ref{order4options} states that $4\circ\partial_{1}$ 
factors through the quotient map 
\(\namedright{S^{5}\times S^{7}}{Q}{S^{5}\wedge S^{7}\simeq S^{12}}\). 
Thus $4\Omega\partial_{1}$ is null homotopic. Consequently, $\Omega\partial_{1}$ 
has order at most $4$. By Lemma~\ref{loopHKlemma}, the order  
of~$\Omega\partial_{1}$ is divisible by $4$. Thus $\Omega\partial_{1}$ has order $4$. 
\end{proof} 

\begin{proof}[Proof of Theorem~\ref{looppartialorder2}] 
Theorem~\ref{partialorder4} implies that in any case the 
$2$-primary component of the order of $\Omega\partial_{1}$ is $4$. 
\end{proof}

\bibliographystyle{amsplain}

\end{document}